\newcommand{\R}{{\mathbb{R}}}
\newcommand{\E}{{\mathbb{E}}}
\newcommand{\N}{{\mathbb{N}}}
\newcommand{\D}{{\mathcal{D}}}
\newcommand{\F}{{\mathcal{F}}} % for the sigma-algebra
\renewcommand{\P}{{\mathbf{P}}} % for the probability measure
\newcommand{\dom}{{\mathrm{dom}}}
\newcommand{\B}{{\mathcal{B}}}
\newcommand{\LB}{{\mathcal{L}}}
\newcommand{\diff}[1]{\,\mathrm{d}#1}
\newcommand{\one}{\mathbb{I}}
\newcommand{\Id}{\mathrm{Id}}
\theoremstyle{plain}
\newtheorem{definition}{Definition}[section]
\newtheorem{theorem}[definition]{Theorem}
\newtheorem{lemma}[definition]{Lemma}
\newtheorem{corollary}[definition]{Corollary}
\newtheorem{proposition}[definition]{Proposition}
\newtheorem{assumption}[definition]{Assumption}
\theoremstyle{definition}
\newtheorem{remark}[definition]{Remark}
\newtheorem{remarks}[definition]{Remarks}
\begin{document}

\title[Consistency and Stability of a Milstein Scheme for Semilinear SPDE]
{Consistency and Stability\\of a Milstein-Galerkin finite element scheme\\
for semilinear SPDE}

\author[R.~Kruse]{Raphael Kruse}
\address{Raphael Kruse\\
ETH Z\"urich\\
Seminar f\"ur Angewandte Mathematik\\
R\"amistrasse 101\\
CH-8092 Z\"urich\\
Switzerland}
\email{raphael.kruse@sam.math.ethz.ch}

\keywords{semilinear SPDE, Milstein, Galerkin finite element methods, strong
convergence, spatio-temporal discretization, Spijker 
norm, bistability, consistency, two-sided error estimate} 
\subjclass[2010]{60H15, 65C30, 65M60, 65M70} % to be checked

\begin{abstract}
  We present an abstract concept for the error analysis of numerical
  schemes for semilinear stochastic partial differential equations (SPDEs) and
  demonstrate its usefulness by proving the strong convergence of a
  Milstein-Galerkin finite element scheme. By a suitable generalization of the
  notion of bistability from Beyn \& Kruse (DCDS B, 2010) to the semigroup
  framework in Hilbert spaces, our main result includes a
  two-sided error estimate of the spatio-temporal discretization. In an
  additional section we derive an analogous result for a Milstein-Galerkin
  finite element scheme with truncated noise. 
\end{abstract}

\maketitle
%\tableofcontents

% include section 1
\section{Introduction}
\label{sec:intro}

The computational approximation of stochastic partial differential equations
(SPDEs) often turns out to be a very expensive and demanding task. One usually
has to combine numerical schemes for the temporal discretization of the interval
$[0,T]$ with Galerkin finite element methods for the spatial discretization as
well as truncation methods for the infinite dimensional noise. By the
combination of such schemes one then generates a sample path of the numerical
solution. If we are interested in the approximation of expected values of 
functionals of the solution, we have to repeat this procedure
several times in order to compute a decent Monte Carlo approximation.

For instance, let $X \colon [0,T]
\times \Omega \to H$ be a Hilbert-space valued stochastic process, which
denotes the solution to the given SPDE. Then our computational goal may be a
good approximation of the real number 
\begin{align}
  \label{eq1:number}
  \E[\varphi(X(T))],
\end{align}
where $\varphi \colon H \to \R$ is a sufficiently smooth mapping.

Before the upcoming of the multilevel Monte Carlo algorithm (MLMC)
\cite{giles2008a,kebaier2005}, it was common to purely focus on weakly
convergent schemes for the problem \eqref{eq1:number}. These schemes guarantee  
a good approximation of the distribution of $X$ and are then combined with a
standard Monte Carlo estimator to compute an approximation of
\eqref{eq1:number}.

In \cite{giles2008a} M.~Giles pointed out that the computational complexity of
problem \eqref{eq1:number} can drastically be reduced by the MLMC algorithm,
which distributes the most costly work of the Monte Carlo estimator to coarser
time grids, while relatively few samples need to be simulated of the smallest
and hence most costly temporal step size. But for this idea to work
one also needs to take the order of strong convergence into account. In
addition to an approximation of the distribution, a strongly
convergent scheme generates good pathwise approximations of
the solution $X$. For more details on strong and weak convergence we refer
to \cite{kloeden1999}. 

Additionally, M.~Giles showed in \cite{giles2008b}
that the usage of higher order strongly convergent schemes, such as the
Milstein method \cite{milstein1995}, further reduces the computational
complexity, although the order of weak convergence remains unchanged. While  
\cite{giles2008a,giles2008b,kebaier2005} are purely concerned with the finite
dimensional SODE problem, similar results also hold for solutions to SPDEs 
\cite{barth2012,barth2012a}.

Consequently, this observation has spurred the study of an infinite 
dimensional analogue of the Milstein scheme and first results have
been achieved for a temporal semidiscretization of linear SPDEs in 
\cite{lang2010, lang2012}. Afterwards, the Milstein scheme has been combined
with Galerkin finite element methods and extended to more general types of
driving noises in \cite{barth2011, barth2013}, while it was applied to
semilinear SPDEs in \cite{jentzen2010a}, but only with spectral Galerkin
methods. 

In this paper we apply the more general Milstein-Galerkin finite element
methods to the class of semilinear SPDEs studied in \cite{jentzen2010a}. Under
mildly relaxed assumptions on the nonlinearities we obtain slightly sharper
estimates of the error of strong convergence. For this we embed the scheme into
a more abstract framework and analyze the strong error with respect to the
notion of bistability and consistency, which originated from \cite{stummel1973}
and has been applied to SODEs for the first time in \cite{beynkruse2010,
kruse2011}. A key role is played by the choice of the so-called Spijker norm
\eqref{eq3:norm1} (see also \cite[p.~438]{hairer1993} and \cite{spijker1968,
spijker1971}), which is used to measure the local truncation error and results
into two-sided estimates of the error as shown in Theorem \ref{th1:Milconv}
below.

In forthcoming publications we show that the abstract concept is not only
useful for the analysis of the error of strong convergence for a broader class
of numerical schemes, but also has its merits in the weak error analysis as well
as in the analysis of an improved MLMC algorithm for SPDEs. 

In order to give a more detailed outline of the paper we first fix some
notation. Let $[0,T]$ be a finite time interval and $(H,( \cdot, \cdot )_H, \|
\cdot \|_H )$ and $(U, (\cdot, \cdot)_U, \| \cdot \|_U)$ be two separable real
Hilbert spaces. We denote by $(\Omega, \F, \P)$ a probability space which is
combined with a normal filtration $(\F_t)_{t \in [0,T]} \subset \F$ satisfying
the usual conditions.  
Then, let $(W(t))_{t\in [0,T]}$ be a cylindrical $Q$-Wiener
process in $U$ with respect to $( \mathcal{F}_t )_{ t \in [0,T] }$. Here, 
the given covariance operator $Q \colon U \to U$ is assumed to be bounded,
symmetric and positive semidefinite, but not necessarily of finite trace. 
For the definition of cylindrical $Q$-Wiener processes in $U$ we refer
to \cite[Ch.~2.5]{roeckner2007}. 

Next, we introduce the semilinear SPDE, whose solution we want to approximate.
Let $X \colon [0,T] \times \Omega \to H$ denote the
mild solution \cite[Ch. 7]{daprato1992} to the semilinear SPDE 
\begin{align}
  \begin{split}
    \mathrm{d}X(t) + \big[ AX(t) + f(X(t)) \big] \diff{t} &= g(X(t))
    \diff{W(t)}, \text{ for } 0 \le t \le T,\\ 
    X(0) &= X_0.
\end{split}
\label{eq1:SPDE}
\end{align}
Here, $-A \colon \dom(A) \subset H \to
H$ is the generator of an analytic semigroup $(S(t))_{t \ge 0}$ on $H$ and $f$
and $g$ denote nonlinear mappings which are Lipschitz continuous and 
smooth in an appropriate sense. In Section \ref{subsec:assumptions} we give a
precise formulation of our conditions on $A$, $f$, $g$ and $X_0$, which are also
sufficient for the existence and uniqueness of mild solutions $X$ (see also
Section \ref{subsec:mild}).

By definition \cite[Ch. 7]{daprato1992} the mild solution satisfies
\begin{align}
  X(t) = S(t) X_0 - \int_0^t S(t-\sigma) f(X(\sigma)) \diff{\sigma} + \int_0^t
  S(t-\sigma) g(X(\sigma)) \diff{W(\sigma)} 
  \label{eq1:mild}
\end{align}
$\P\text{-a.s.}$ for all $0 \le t \le T$.  

As our main example we have the following situation in mind: $H$ is the space
$L_2 (\D;\R)$ of square integrable functions, where $\mathcal{D} \subset \R^d$
is a bounded domain with smooth boundary $\partial \D$ or a convex domain with
polygonal boundary. Then, for example, let $-A$ be the Laplacian with
homogeneous Dirichlet boundary conditions. Much more extensive lists of
examples are given in \cite{jentzen2010a, jentzen2010b} and
\cite[Ch.~2.3]{kruse2013}. 

In order to introduce the \emph{Milstein-Galerkin finite element scheme}
we denote by $k \in (0,T]$ a given equidistant time step size with grid points
$t_n = nk$, $n = 1,\ldots,N_k$, and by $h \in (0,1]$ a parameter for the
spatial discretization. Then the Milstein scheme for the spatio-temporal
discretization of the SPDE \eqref{eq1:SPDE} is given by the recursion 
\begin{align}
  \label{eq4:Milstein}
  \begin{split}
    X_{k,h}(t_0) &= P_h X_0,\\
    X_{k,h}(t_n) &= X_{k,h}(t_{n-1}) - k \big[ A_h X_{k,h}(t_n) +
    P_h f(X_{k,h}(t_{n-1})) \big]\\
    &\quad + P_h g(X_{k,h}(t_{n-1})) \Delta_k W(t_{n})
    \\  
    &\quad + \int_{t_{n-1}}^{t_n} P_h g'(X_{k,h}(t_{n-1}))\Big[ 
    \int_{t_{n-1}}^{\sigma_1} g(X_{k,h}(t_{n-1})) \diff{W(\sigma_2)} \Big]
    \diff{W(\sigma_1)}
  \end{split}
\end{align}
for $n \in \{1,\ldots,N_k\}$, where $\Delta_k W(t_n) := W(t_n) - W(t_{n-1})$.
Here, $P_h$, $h \in (0,1]$, denotes the orthogonal projector onto the Galerkin
finite element space $V_h \subset H$ and $A_h$ is a discrete version of the
generator $A$. Together with some useful error estimates the operators of the
spatial approximation are explained in more detail in Section
\ref{subsec:Galerkin}.

In Section \ref{sec:numscheme} we introduce a class of abstract numerical
one-step schemes in Hilbert spaces and we develop our stability and consistency
analysis within this framework. We end up with a set of sufficient conditions
for the so-called bistability (see Definition \ref{def:stab}) and a
decomposition of the local truncation error. 

In Sections \ref{sec:Mil} and \ref{subsec:Milcons} we verify that the
scheme \eqref{eq4:Milstein} is indeed bistable and consistent (see Theorems
\ref{th:Milstab} and \ref{th4:cons}). These two properties together yield 
our main result (compare with Theorem \ref{th:lvlconv}):

\begin{theorem}
  \label{th1:Milconv}
  Suppose the spatial discretization fulfills Assumptions \ref{as:Vh} and
  \ref{as:Vh2}. If Assumptions \ref{as4:initial} to \ref{as4:g} are satisfied
  with $p \in [2,\infty)$ and $r \in [0,1)$, then there exists a constant $C$
  such that
  \begin{align}
    \label{eq1:twosided}
    \frac{1}{C} \big\| \mathcal{R}_k [ X|_{\mathcal{T}_k}] \big\|_{-1,p} \le 
    \max_{0 \le n \le N_k} \big\| X_{k,h}(t_n) - X(t_n)
    \big\|_{L_p(\Omega;H)} \le C \big\| \mathcal{R}_k [ X|_{\mathcal{T}_k}]
    \big\|_{-1,p}.
  \end{align}
  In particular, from the estimate of the local truncation error it follows
  that
  \begin{align}
    \label{eq1:error}
    \max_{0 \le n \le N_k} \big\| X_{k,h}(t_n) - X(t_n)
    \big\|_{L_p(\Omega;H)} \le C \big( h^{1+r} + k^{\frac{1+r}{2}} \big) 
  \end{align}
  for all $h \in (0,1]$ and $k \in (0,T]$, where $X_{k,h}$
  denotes the grid function generated by the scheme \eqref{eq4:Milstein} and
  $X$ is the mild solution to \eqref{eq1:SPDE}. 
\end{theorem}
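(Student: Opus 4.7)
The plan is to deduce \eqref{eq1:twosided} and \eqref{eq1:error} as a direct consequence of the abstract convergence result Theorem \ref{th:lvlconv} of Section \ref{sec:numscheme}, whose two input hypotheses are exactly bistability of the scheme \eqref{eq4:Milstein} in the sense of Definition \ref{def:stab} (to be established in Theorem \ref{th:Milstab}) and a consistency estimate for the local truncation error in the Spijker norm $\|\cdot\|_{-1,p}$ (Theorem \ref{th4:cons}). Bistability applied with the grid restriction $Y = X|_{\mathcal{T}_k}$ of the mild solution \eqref{eq1:mild} immediately delivers the two-sided equivalence \eqref{eq1:twosided}, because the residual of the scheme on its own output vanishes. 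Substituting the consistency bound $\|\mathcal{R}_k[X|_{\mathcal{T}_k}]\|_{-1,p} \le C(h^{1+r}+k^{(1+r)/2})$ into the upper half then yields \eqref{eq1:error}. The genuine work therefore reduces to verifying bistability and consistency, which I would address in the next two paragraphs.

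For bistability I would argue as follows. The upper implication (error controlled by residual) comes from subtracting two iterates of \eqref{eq4:Milstein} driven by distinct residuals, applying the Lipschitz continuity of $f$ and $g$ from Assumptions \ref{as4:f} and \ref{as4:g}, using the It\^o isometry and the Burkholder--Davis--Gundy inequality to control the $L_p(\Omega;H)$ norm of the stochastic increments, exploiting the uniform contractivity of $(\Id + kA_h)^{-1}$ coming from analyticity of the semigroup, and closing with a discrete Gronwall lemma. The lower implication (residual controlled by error) is the more delicate half, and is precisely the reason for working with the Spijker norm: residuals accumulate along the grid by a telescoping sum, and $\|\cdot\|_{-1,p}$ is defined so as to absorb this summation without any loss in the order of convergence. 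I would obtain it by interpreting the error as the residual of a perturbed scheme whose solution coincides with $Y$, in the spirit of \cite{beynkruse2010,kruse2011}, but carefully tracked in the adapted $L_p(\Omega;H)$ setting.

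For consistency I would substitute the mild solution \eqref{eq1:mild} into the recursion \eqref{eq4:Milstein} and decompose $\mathcal{R}_k[X|_{\mathcal{T}_k}]$ into four pieces: a Galerkin--semigroup error of the form $(S(t_n) - (\Id + kA_h)^{-n}P_h) X_0$ controlled by Assumptions \ref{as:Vh} and \ref{as:Vh2} at rate $O(h^{1+r})$; a deterministic drift quadrature error from replacing $\int_{t_{n-1}}^{t_n} S(t_n-\sigma)f(X(\sigma))\diff{\sigma}$ by a left-point rectangle approximation; a stochastic diffusion consistency error whose leading It\^o correction is precisely what the Milstein term in \eqref{eq4:Milstein} is designed to cancel via a first-order It\^o--Taylor expansion of $g(X(\sigma))$; and a remainder of higher-order double stochastic integrals whose $L_p(\Omega;H)$ norm contributes $O(k^{(1+r)/2})$ after applying the It\^o isometry, the analytic smoothing bound $\|(-A)^\gamma S(t)\|\le Ct^{-\gamma}$, and the H\"older regularity of $X$ with exponent $(1+r)/2$ guaranteed by the assumptions.

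The principal obstacle I anticipate is the lower bound in bistability. In the infinite-dimensional setting with an unbounded generator $-A$, a covariance operator $Q$ that need not be of trace class, and the requirement that any admissible residual remain adapted to $(\F_t)_{t \in [0,T]}$, the duality argument from the SODE case \cite{beynkruse2010,kruse2011} must be lifted to the analytic-semigroup framework while preserving the sharp Spijker structure. Once bistability and consistency are both in hand, the two estimates \eqref{eq1:twosided} and \eqref{eq1:error} follow by direct combination with no further technical input.
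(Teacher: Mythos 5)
Your proposal follows exactly the paper's route: Theorem \ref{th1:Milconv} is obtained by combining the bistability of the scheme (Theorem \ref{th:Milstab}), the consistency estimate (Theorem \ref{th4:cons}) and the abstract equivalence of Theorem \ref{th:lvlconv}, with the two-sided bound coming from applying \eqref{eq3:bistab} to $Y = X|_{\mathcal{T}_k}$ and $Z = X_{k,h}$ and using $\mathcal{R}_k[X_{k,h}]=0$. Your sketches of the bistability and consistency proofs also match the paper's arguments in substance (discrete variation of constants plus discrete Gronwall, and the five-term decomposition of Lemma \ref{lem:cons}); the only notable difference of emphasis is that the lower bound in \eqref{eq3:bistab}, which you flag as the principal obstacle, is in fact the easier direction in the paper's proof of Theorem \ref{th:stab} (a triangle inequality combined with \eqref{eq4:sumsing}, no Gronwall needed).
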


At this point, for a better understanding of Theorem \ref{th1:Milconv}, let us
explain the different objects appearing in its formulation. First,
Assumptions \ref{as:Vh} and \ref{as:Vh2} are concerned with the spatial
discretization. In our main example above, they are usually 
satisfied for the standard piecewise linear finite element method. 

Roughly speaking, the other assumptions determine the spatial regularity of the
solution, which is measured by the parameter $r \in [0,1)$ in terms of
fractional powers of the operator $A$. This parameter also mainly controls the
order of convergence. 

Then, in \eqref{eq1:twosided} the residual operator $\mathcal{R}_k$ of the
numerical scheme \eqref{eq4:Milstein} appears. It characterizes 
\eqref{eq4:Milstein} in the sense that $\mathcal{R}_k[Z] = 0$ if and only if
the $H$-valued grid function $Z$ coincides with $X_{k,h}$. We therefore use the
residual operator in order to determine how far the exact solution $X$
(restricted to the time grid) differs from the numerical solution $X_{k,h}$.
This residual is called local truncation error and, if measured in terms of
the stochastic Spijker norm \eqref{eq3:norm1}, it can be used to estimate the
strong error from above and below (compare further with Section
\ref{sec:numscheme}).

In \cite{kruse2011} two-sided error estimates of the form \eqref{eq1:twosided}
have been used to prove the maximal order of convergence of all It\^o-Taylor
schemes. However, this question is not discussed in this paper and it is
subject to future research if a similar result can be derived for the
Milstein-Galerkin finite element scheme.

Further, we note that the order of convergence in \eqref{eq1:error} is slightly
sharper than in \cite{jentzen2010a}, where the corresponding result
contains a small order reduction of the form $1 + r - \epsilon$ for arbitrary
$\epsilon > 0$. As in \cite{kruse2012} this order reduction is avoided by the
application of Lemma \ref{lem:Fkh2}, which contains sharp integral versions
of estimates for the Galerkin finite element error operator. 

In practice, the scheme \eqref{eq4:Milstein} can seldomly be implemented
directly on a computer due to the fact that the space $U$ and thus also the
noise $W$ is probably of high or infinite dimension. In our final Section
\ref{sec:Noise} we discuss the stability and consistency of a variant of
\eqref{eq4:Milstein}, which incorporates a spectral approximation of the Wiener
process. This approach has already been studied by several
authors in the context of Milstein schemes for SPDEs, for instance in
\cite{barth2011, barth2013, jentzen2010a}. With Theorem \ref{th6:conv} we
obtain an extended version of Theorem \ref{th1:Milconv}, which also takes the
noise approximation into account.

% include section 2
\section{Preliminaries}
\label{sec:prelim}

\subsection{Main Assumptions}
\label{subsec:assumptions}

In this subsection we give a precise formulation of our assumptions on the SPDE
\eqref{eq1:SPDE}. The first one is concerned with the linear operator. 

\begin{assumption}
  \label{as4:linop}
  The linear operator $A \colon \dom(A) \subset H \to H$ is densely defined,
  self-adjoint and positive definite with compact inverse.  
\end{assumption}

As in \cite[Ch.~2.5]{pazy1983} it follows from Assumption \ref{as4:linop} that
the operator $-A$ is the generator of an analytic semigroup $(S(t))_{t \in
[0,T]}$ on $H$. There also exists an increasing, real-valued sequence
$(\lambda_i)_{i \in \N}$ with $\lambda_i > 0$, $i \in \N$, and $\lim_{i \in \N}
\lambda_i = \infty$ and an 
orthonormal basis $(e_i)_{i \in \N}$ of $H$ such that $A e_i = \lambda_i e_i$
for every $i \in \N$.

Further, we recall the definition of fractional powers of $A$ from
\cite[Ch.~B.2]{kruse2013}. For any $r \ge 0$ the 
operator $A^{\frac{r}{2}} \colon \dom( A^{\frac{r}{2}} ) \subset H \to H$ is
defined by
\begin{align*}
  A^{\frac{r}{2}} x := \sum_{j = 1}^\infty \lambda_j^{\frac{r}{2}} (x, e_j) e_j
\end{align*}
for all 
\begin{align*}
  x \in \dom(A^{\frac{r}{2}}) = \Big\{ x \in H \, \colon \, \sum_{j = 1}^\infty
  \lambda_j^r (x, e_j)^2 < \infty \Big\}. 
\end{align*}
Endowed with the inner product $(\cdot, \cdot)_r := (A^{\frac{r}{2}} \cdot,
A^{\frac{r}{2}} \cdot )$ and norm $\| \cdot \|_{r} := \|
A^{\frac{r}{2}} \cdot \|$ the spaces $\dot{H}^{r}:=
\dom(A^{\frac{r}{2}})$ become separable Hilbert spaces. 

In addition, we define the spaces $\dot{H}^{-r}$ with negative exponents
as the dual spaces of $\dot{H}^{r}$, $r > 0$. In this case it follows 
from \cite[Th.~B.8]{kruse2013} that the elements of $\dot{H}^{-r}$ can be
characterized by 
\begin{align*}
  \dot{H}^{-r} = \Big\{ x = \sum_{j =1}^{\infty} x_j e_j \, \colon \,
  (x_j)_{j \in \N} \subset \R,\; \text{ with } \sum_{j = 1}^\infty 
  \lambda_j^{-r} x_j^2 < \infty \Big\},  
\end{align*}
where the equality is understood to be isometrically isomorphic and the norm in 
$\dot{H}^{-r}$ can be computed by $\| x \|_{-r} = \| A^{-\frac{r}{2}} x \|$.
Here, we set
\begin{align*}
  A^{-\frac{r}{2}} x = \sum_{j = 1}^{\infty} \lambda_j^{-\frac{r}{2}} x_j e_j,
  \quad \text{ for all } x = \sum_{j = 1}^\infty x_j e_j \in \dot{H}^{-r}.
\end{align*}
For the formulation of the remaining assumptions let parameter values $p \in
[2,\infty)$ and $r \in [0,1)$ be given.

\begin{assumption}
  \label{as4:initial}
  The random variable $X_0 \colon \Omega \to \dot{H}^{1+r}$ is
  $\F_0/\mathcal{B}(\dot{H}^{1+r})$-measur\-able. In addition, it holds
  \begin{align*}
    \E \big[ \| X_0 \|^{2p}_{1+r} \big] < \infty. 
  \end{align*}
\end{assumption}

The next assumption is concerned with the nonlinear mapping $f \colon H \to
\dot{H}^{-1+r}$ in \eqref{eq1:SPDE}. 

\begin{assumption}
  \label{as4:f} 
  The mapping $f \colon H \to \dot{H}^{-1+r}$
  is continuously Fr\'echet differentiable.
  In addition, there exists a
  constant $C_f$ such that $\| f(0) \|_{-1+r} \le C_f$ and 
  \begin{align*}
    \sup_{x \in H} \| f'(x) \|_{\LB(H;\dot{H}^{-1+r})} \le C_f, 
  \end{align*}
  as well as
  \begin{align}
    \label{eq4:flip}
    \begin{split}
      \| f(x_1) - f(x_2) \|_{-1+r} &\le C_f \| x_1 - x_2 \|,\\
      \| f'(x_1) - f'(x_2) \|_{\LB(H,\dot{H}^{-1+r})} &\le C_f \| x_1 - x_2 \|,
    \end{split}
  \end{align}
  for all $x_1, x_2 \in H$. 
\end{assumption}

The last assumption deals with the nonlinear mapping $g$ in the
stochastic integral part of \eqref{eq1:SPDE}. As in \cite{daprato1992,
roeckner2007} we denote the so-called Cameron-Martin space by $U_0 :=
Q^{\frac{1}{2}}(U)$, which together with the inner 
product $(u_0,v_0)_{U_0} := ( Q^{-\frac{1}{2}} u_0, Q^{-\frac{1}{2}} v_0 )_U$
for $u_0, v_0 \in U_0$ becomes an Hilbert space. Here $Q^{-\frac{1}{2}}$ 
denotes the pseudoinverse \cite[App.~C]{roeckner2007} of $Q^{\frac{1}{2}}$.

Then, by $\LB_2(H_1,H_2) \subset \LB(H_1,H_2)$ we denote the space of all
Hilbert-Schmidt operators $L \colon H_1 \to H_2$ between two separable Hilbert
spaces $H_1$ and $H_2$. Together with the inner product 
\begin{align*}
  ( L_1, L_2)_{\LB_2(H_1,H_2)} = \sum_{j = 1}^\infty \big( L_1 \psi_j, L_2
  \psi_j \big)_{H_2}, 
\end{align*}
where $(\psi_j)_{j \in \N}$ is an arbitrary orthonormal basis of $H_1$,
the set $\LB_2(H_1,H_2)$ becomes an Hilbert space. We recall the abbreviations
$\LB_2^0 := \LB_2(U_0,H)$ and $\LB_{2,r}^0 :=  \LB_2(U_0, \dot{H}^r)$ from
\cite{kruse2012} and refer to \cite[App.~B]{roeckner2007} for a short review on
Hilbert-Schmidt operators. 

\begin{assumption}
  \label{as4:g}
  Let the mapping $g \colon H \to \LB_2^0$ be continuously Fr\'echet
  differentiable. In addition, there exists a
  constant $C_g$ such that $\| g(0) \|_{\LB_2^0} \le C_g$ and 
  \begin{align*}
    \sup_{x \in H} \| g'(x) \|_{\LB(H;\LB_2^0)} \le C_g,
  \end{align*}
  as well as
  \begin{align}
    \label{eq4:glip}
    \begin{split}
      \| g(x_1) - g(x_2) \|_{\LB_2^0} &\le C_g \| x_1 - x_2\|,\\
      \| g'(x_1) - g'(x_2) \|_{\LB(H,\LB_2^0)} & \le C_g \| x_1 - x_2\|, \\
      \| g'(x_1)g(x_1) - g'(x_2)g(x_2) \|_{\LB_2(U_0,\LB_2^0)} &\le C_g \| x_1 -
      x_2\|, 
    \end{split}
  \end{align}
  for all $x_1, x_2 \in H$.

  Further, the mapping $g \colon H \to \LB_2^0$
  satisfies $g(x) \in \LB_{2,r}^0$ and
  \begin{align}
    \label{eq4:glin}
    \| g(x) \|_{\LB_{2,r}^0} \le C_g \big(1 +  \| x \|_r \big)
  \end{align}
  for all $x \in \dot{H}^r$.
\end{assumption}

\begin{remark}
  It is straightforward to generalize most of the results and techniques, which
  we develop in this paper, to the case when $f$ and $g$ are allowed to 
  also depend on $t\in [0,T]$ and $\omega \in \Omega$. For example, this has
  been done for the linearly implicit Euler-Maruyama method in \cite{kruse2013}. 
\end{remark}

\subsection{Existence, uniqueness and regularity of the mild solution}
\label{subsec:mild}

Under the assumptions of Subsection \ref{subsec:assumptions}, 
there exists a unique (up to modification) mild solution 
$X \colon [0,T] \times \Omega \to H$ to \eqref{eq1:SPDE} of
the form \eqref{eq1:mild}. A proof for this is found, for instance, in
\cite[Ch.~2.4]{kruse2013} (based on the methods from
\cite[Th.~1]{jentzen2010b}).

Furthermore, it holds true that for all $s \in [0,r+1]$, where $r \in 
[0,1)$ and $p \in [2,\infty)$ are given by Assumptions \ref{as4:initial} to
\ref{as4:g}, we have 
\begin{align}
  \label{eq2:reg}
  \sup_{t \in [0,T]} \E\big[ \| X(t) \|^{2p}_{s} \big]  < \infty
\end{align}
and there exists a constant $C$ such that
\begin{align}
  \label{eq2:hoelder}
  \big(\E \big[ \| X(t_1) -
  X(t_2) \|_{s}^{2p} \big] \big)^{\frac{1}{2p}} \le C |t_1
  -t_2|^{\min(\frac{1}{2},\frac{r+1 - s}{2})}
\end{align}
for all $t_1,t_2 \in [0,T]$. These regularity results have been proved in
\cite[Th.~1]{jentzen2010b} and \cite{kl2010a}.

\subsection{A Burkholder-Davis-Gundy type inequality}
Burkholder-Davis-Gundy-type inequalities are frequently used
to estimate higher moments of stochastic integrals. The version in
Proposition \ref{prop:stoch_int} is a special case of
\cite[Lem.~7.2]{daprato1992}. 

\begin{proposition}
  \label{prop:stoch_int} % old label \label{lem:stoch_int}
  For any $p \in [2,\infty)$, $0 \le \tau_1 < \tau_2 \le T$, and for any 
  predictable stochastic process $\Psi \colon [0,T] \times \Omega \to \LB_2^0$,
  which satisfies
  \begin{align*}
    \Big( \int_{\tau_1}^{\tau_2}
    \big\| \Psi(\sigma) \big\|^2_{L_p(\Omega;\LB_2^0)} \diff{\sigma}
    \Big)^{\frac{1}{2}} < \infty, 
  \end{align*}  
  we have
  \begin{align*}
    \Big\| \int_{\tau_1}^{\tau_2} \Psi(\sigma) \diff{W(\sigma)}
    \Big\|_{L_p(\Omega;H)}
    &\le C(p) \Big( \E \Big[ \Big( \int_{\tau_1}^{\tau_2}
    \big\| \Psi(\sigma) \big\|^2_{\LB_2^0} \diff{\sigma}
    \Big)^{\frac{p}{2}} \Big] \Big)^{\frac{1}{p}} \\
    &\le C(p)\, \Big( \int_{\tau_1}^{\tau_2}
    \big\| \Psi(\sigma) \big\|^2_{L_p(\Omega;\LB_2^0)} \diff{\sigma}
    \Big)^{\frac{1}{2}}. 
  \end{align*}
  Here the constant can be chosen to be
  \begin{align*}
    C(p) = \left( \frac{p}{2} ( p - 1)
    \right)^{\frac{1}{2}} \left( \frac{p}{p - 1}
    \right)^{(\frac{p}{2} - 1)}.
  \end{align*}
\end{proposition}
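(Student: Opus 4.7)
The plan is to prove the first inequality via It\^o's formula applied to the real-valued process $\|M(t)\|^p$, where $M(t) := \int_{\tau_1}^{t} \Psi(\sigma) \diff{W(\sigma)}$ for $t \in [\tau_1,\tau_2]$, and then to derive the second inequality as a direct application of Minkowski's integral inequality. Because $p \ge 2$, the function $x \mapsto \|x\|^p$ on $H$ is of class $C^2$ (with Hessian vanishing at the origin for $p > 2$), so It\^o's formula is applicable to it without any additional localization step, and the integrability hypothesis on $\Psi$ guarantees that $M$ is a well-defined $H$-valued square-integrable martingale.

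First I would compute the gradient $p\|x\|^{p-2} x$ and the Hessian $p(p-2)\|x\|^{p-4}\, x \otimes x + p\|x\|^{p-2} \Id$ of $x \mapsto \|x\|^p$. Applying It\^o's formula to $\|M(t)\|^p$, the local martingale term has zero expectation, and a direct estimate of the trace of the Hessian contracted with $\Psi(\sigma) Q^{\frac{1}{2}} (\Psi(\sigma) Q^{\frac{1}{2}})^*$ yields
\begin{align*}
\E\big[\|M(\tau_2)\|^p\big] \le \frac{p(p-1)}{2}\, \E\Big[ \int_{\tau_1}^{\tau_2} \|M(\sigma)\|^{p-2}\, \|\Psi(\sigma)\|_{\LB_2^0}^2 \diff{\sigma} \Big].
\end{align*}
Next, H\"older's inequality on $[\tau_1,\tau_2]\times\Omega$ with conjugate exponents $\tfrac{p}{p-2}$ and $\tfrac{p}{2}$ produces
\begin{align*}
\E\big[\|M(\tau_2)\|^p\big] \le \frac{p(p-1)}{2}\, \E\Big[ \sup_{\sigma \in [\tau_1,\tau_2]} \|M(\sigma)\|^{p} \Big]^{\frac{p-2}{p}} \Big( \E\Big[ \Big(\int_{\tau_1}^{\tau_2} \|\Psi(\sigma)\|_{\LB_2^0}^2 \diff{\sigma}\Big)^{\frac{p}{2}} \Big] \Big)^{\frac{2}{p}}.
\end{align*}
Combining this with Doob's maximal inequality $\E[\sup_{\sigma}\|M(\sigma)\|^p] \le (p/(p-1))^p\, \E[\|M(\tau_2)\|^p]$ and solving the resulting algebraic inequality for $\E[\|M(\tau_2)\|^p]^{1/p}$ recovers precisely the constant $C(p)$ in the statement.

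Finally, the second inequality is a direct consequence of Minkowski's integral inequality for the $L_{p/2}(\Omega)$-norm, namely
\begin{align*}
\Big( \E \Big[ \Big( \int_{\tau_1}^{\tau_2} \|\Psi(\sigma)\|^2_{\LB_2^0} \diff{\sigma} \Big)^{\frac{p}{2}} \Big] \Big)^{\frac{1}{p}} \le \Big( \int_{\tau_1}^{\tau_2} \|\Psi(\sigma)\|^2_{L_p(\Omega;\LB_2^0)} \diff{\sigma} \Big)^{\frac{1}{2}}.
\end{align*}
The main technical obstacle will be the careful bookkeeping of sharp constants: one must verify that bounding the Hessian trace yields exactly the prefactor $p(p-1)/2$ (as opposed to, say, $p^2/2$), and then combine this with Doob's inequality so as to reproduce $C(p) = (p(p-1)/2)^{1/2} (p/(p-1))^{p/2-1}$ rather than a strictly larger constant. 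Once this is done, setting $p = 2$ gives the It\^o isometry as the degenerate case, serving as a consistency check.
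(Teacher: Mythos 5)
Your argument is correct and does arrive at the sharp constant, but it takes a genuinely different route from the paper: the paper does not reprove the Burkholder--Davis--Gundy-type bound at all. Its proof consists of verifying that $\bigl(\E\bigl[\bigl(\int_{\tau_1}^{\tau_2}\|\Psi(\sigma)\|_{\LB_2^0}^2\diff{\sigma}\bigr)^{p/2}\bigr]\bigr)^{1/p}$ is finite via exactly the Minkowski step (the triangle inequality in $L_{p/2}(\Omega;\R)$) that you use for the second inequality, concluding that the stochastic integral is well defined, and then citing \cite[Lem.~7.2]{daprato1992} for the first inequality with the stated $C(p)$. What you write out --- It\^o's formula for $\|M(t)\|^p$, the trace bound $\tfrac{p(p-1)}{2}\|M\|^{p-2}\|\Psi\|^2_{\LB_2^0}$, H\"older with exponents $\tfrac{p}{p-2}$ and $\tfrac{p}{2}$, and Doob's $L_p$ maximal inequality --- is essentially the proof of that cited lemma, and your bookkeeping is right: solving $x\le \tfrac{p(p-1)}{2}\bigl(\tfrac{p}{p-1}\bigr)^{p-2}x^{(p-2)/p}A^{2/p}$ for $x^{1/p}$ gives exactly $C(p)$, with $p=2$ degenerating to the It\^o isometry. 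The one place you are too quick is the claim that no localization is needed: $C^2$-regularity of $x\mapsto\|x\|^p$ makes It\^o's formula applicable, but it does not by itself guarantee that the stochastic-integral term is a true martingale with vanishing expectation, nor that $\E[\sup_\sigma\|M(\sigma)\|^p]<\infty$ --- and that finiteness is indispensable in the final algebraic step, where you implicitly divide by $\E[\|M(\tau_2)\|^p]^{(p-2)/p}$. The standard repair is to stop $M$ at the exit times of balls of radius $n$, run the argument for the stopped martingale (for which all quantities are finite), and let $n\to\infty$ by Fatou's lemma. With that fix your proof is complete and self-contained, whereas the paper's version buys brevity at the cost of an external reference.
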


\begin{proof}
  Under the given assumptions on $\Psi$ it follows that
  \begin{align*}
    \Big( \E \Big[ \Big( \int_{\tau_1}^{\tau_2}
    \big\| \Psi(\sigma) \big\|^2_{\LB_2^0} \diff{\sigma}
    \Big)^{\frac{p}{2}} \Big] \Big)^{\frac{1}{p}}
    &= \Big\| \int_{\tau_1}^{\tau_2}
    \big\| \Psi(\sigma) \big\|^2_{\LB_2^0} \diff{\sigma} 
    \Big\|_{L_{p/2}(\Omega;\R)}^{\frac{1}{2}}\\
    &\le \Big( \int_{\tau_1}^{\tau_2}
    \big\| \Psi(\sigma) \big\|^2_{L_p(\Omega;\LB_2^0)} \diff{\sigma}
    \Big)^{\frac{1}{2}} < \infty.
  \end{align*}  
  Therefore, the stochastic integral is well-defined and
  \cite[Lem.~7.2]{daprato1992} yields 
  \begin{align*}
    \Big\| \int_{\tau_1}^{\tau_2} \Psi(\sigma) \diff{W(\sigma)}
    \Big\|_{L_p(\Omega;H)} &\le C(p) \Big( \E \Big[ \Big(
    \int_{\tau_1}^{\tau_2}  \big\| \Psi(\sigma) \big\|^2_{\LB_2^0}
    \diff{\sigma} \Big)^{\frac{p}{2}} \Big] \Big)^{\frac{1}{p}}\\
    &\le C(p) \Big( \int_{\tau_1}^{\tau_2}
    \big\| \Psi(\sigma) \big\|^2_{L_p(\Omega;\LB_2^0)} \diff{\sigma}
    \Big)^{\frac{1}{2}},    
  \end{align*}  
  which are the asserted inequalities.
\end{proof}

\subsection{Galerkin finite element methods}
\label{subsec:Galerkin}

In this subsection we recall the most important elements of 
Galerkin finite element methods. For a more detailed review we refer 
to \cite{kruse2012,kruse2013}, which in turn are based on \cite[Ch.~2, 3 and
7]{thomee2006}.  

Our starting point is a sequence $(V_h)_{h \in (0,1]}$ of finite dimensional 
subspaces of $\dot{H}^1$. Here, the parameter $h \in (0,1]$ controls
the dimension of $V_h$, which usually increases as
$h$ decreases. For smaller values of $h$ we therefore expect to find better
approximations of smooth elements of $H$ within $V_h$.

Then, for every $h \in (0,1]$ the 
\emph{Ritz projector} $R_h \colon \dot{H}^{1} \to V_h$ is the orthogonal
projector onto $V_h$ with respect to the inner product $(\cdot,\cdot)_1$ and
given by 
\begin{align*}
  \big( R_h x, y_h \big)_1 = \big( x, y_h \big)_1 \quad \text{for all } x \in
  \dot{H}^{1}, \, y_h \in V_h.
\end{align*}
The following assumption ensures that the spaces
$(V_h)_{h \in (0,1]}$ contain good approximations of all
elements in $\dot{H}^1$ and 
$\dot{H}^2$, respectively. It is formulated in terms of
the Ritz projector and closely related to the spatial approximation of the
elliptic problem $Au = f$ as noted in \cite[Rem.~3.4]{kruse2013}. Compare also
with \cite[(ii) on p.~31 and (2.25)]{thomee2006}). 

\begin{assumption}
  \label{as:Vh}
  Let a sequence $(V_h)_{h \in (0,1]}$ of finite dimensional subspaces of
  $\dot{H}^1$ be given such that there exists a constant $C$ with 
  \begin{align}
    \label{eq3:Rh}
    \big\| R_h x - x \big\| \le C h^s \| x \|_{s} \text{ for all } x \in
    \dot{H}^s, \; s\in\{1,2\}, \; h \in(0,1].
  \end{align}
\end{assumption}

Another important operator is the linear mapping $A_h \colon V_h \to V_h$,
which denotes a discrete version of $A$. For
a given $x_h \in V_h$ we define $A_h x_h \in V_h$ by the representation theorem
through the relationship 
\begin{align*}
  (x_h, y_h)_{1} = (A_h x_h, y_h) \quad \text{for all } y_h \in V_h.
\end{align*}
It directly follows that $A_h$ is self-adjoint and positive definite on $V_h$.

Finally, we denote by $P_h \colon \dot{H}^{-1} \to V_h$ the (generalized)
orthogonal projector onto $V_h$ with respect to the inner product in $H$.
As in \cite{chrysafinos2002} the projector $P_h$ is defined by
\begin{align*}
  (P_h x, y_h) = ( A^{-\frac{1}{2}} x, A^{\frac{1}{2}} y_h ) \quad \text{for
  all } x \in \dot{H}^{-1}, y_h \in V_h.
\end{align*}
After having introduced all operators for the spatial approximation we recall
the following discrete negative norm estimate from \cite[(3.7)]{larsson2006}
\begin{align}
  \label{eq2:discnorm}
  \begin{split}
    \| A_h^{-\frac{1}{2}} P_h x \| &= \sup_{z_h \in V_h}
    \frac{\big|(A_h^{-\frac{1}{2}}P_h x,z_h) \big|}{\| z_h \|} =
    \sup_{z_h \in V_h} \frac{\big|( P_h x,A_h^{-\frac{1}{2}}  z_h) \big|}{\|
    z_h \|} \\
    &= \sup_{z_h' \in V_h} \frac{\big|\langle x,z_h' \rangle
    \big|}{\|A_h^{\frac{1}{2}} z_h' \|} \le \sup_{z_h' \in V_h}
    \frac{\|x \|_{-1} \| z_h' \|_{1}}{ \|A_h^{\frac{1}{2}} z_h' \| } =  \|x
    \|_{-1} 
  \end{split}
\end{align}
for all $x\in \dot{H}^{-1}$.

The remainder of this subsection lists some error estimates for 
spatio-temporal Galerkin finite element approximations of the linear Cauchy
problem 
\begin{align}
  \label{eq2:linprob}
  \frac{\mathrm{d}}{\diff{t}} u(t) + A u(t) = 0,\quad t \in [0,T], \quad
  u(0) = x \in H. 
\end{align}
In terms of the semigroup $(S(t))_{t \in [0,T]}$ generated by $-A$, the
solution to \eqref{eq2:linprob} is given by $u(t) = S(t) x$ for all $t \in
[0,T]$.  

Let $k \in (0,T]$ be a given equidistant time step size. We define $N_k \in \N$
by $k N_k \le T < k(N_k + 1)$ and denote the set of all temporal grid points by
$\mathcal{T}_k := \{ t_n \, : \, n = 0,1,\ldots,N_k\,\}$ with $t_n = 
kn$. Then, we combine the spatially discrete operators with a backward Euler
scheme and obtain the spatio-temporal Galerkin finite element approximation
$u_{k,h} \colon \mathcal{T}_k \to V_h$ of \eqref{eq2:linprob}, which
is given by the recursion 
\begin{align}
  \label{eq2:Euler}
  \begin{split}
    u_{k,h}(t_0) &= P_h x,\\
    u_{k,h}(t_n) + k A_h u_{k,h}(t_n) &= u_{k,h}(t_{n-1}), \quad n =
    1,\ldots,N_k,
  \end{split}
\end{align}
for $h \in (0,1]$ and $k \in (0,T]$.
Equivalently, we may write $u_{k,h}(t_n) = S_{k,h}^n P_h u_0$ with
$S_{k,h} = ( I + k A_h)^{-1}$ for all $n \in \{0,\ldots,N_k\}$.

Similar to the analytic semigroup $(S(t))_{t \in [0,T]}$, the discrete operator
$S_{k,h}$ has the following smoothing property
\begin{align}
  \label{eq2:discsmoothing}
  \big\| A_h^{\rho} S_{k,h}^{-j} x_h \big\| =
  \big\| A_h^\rho ( \Id_H + k A_h )^{-j} x_h \big\| \le C t_{j}^{-\rho} \| x_h
  \|
\end{align}
for all $j \in \{1,\ldots,N_k\}$, $x_h \in V_h$, $k \in (0,T]$ and $h \in
(0,1]$. Here the constant $C = C(\rho)$ is independent of $h,k$ and $j$. For a
proof of \eqref{eq2:discsmoothing} we refer to \cite[Lem.~7.3]{thomee2006}. 

For the error analysis in Section \ref{sec:Mil}
it will be convenient to introduce the continuous time error operator between
\eqref{eq2:linprob} and \eqref{eq2:Euler}
\begin{align}
  \label{eq2:errOp}
  F_{k,h}(t) := S_{k,h}(t) P_h - S(t), \quad t \in [0,T),
\end{align}
where 
\begin{align}
  \label{eq2:defSkh}
  S_{k,h}(t) := ( \Id_H + k A_h )^{-j}, \quad \text{if } t \in [t_{j-1}, t_j)
  \text{ for } j\in\{1,2,\ldots\,N_k\} .
\end{align}
The mapping $t \mapsto S_{k,h}(t)$, and hence $t \mapsto F_{k,h}(t)$, is right
continuous with left limits. A simple consequence
of \eqref{eq2:discsmoothing} and \eqref{eq2:discnorm} are the inequalities
\begin{align}
  \label{eq2:stabSkh}
  \big\| S_{k,h}(t) P_h x \big\| \le C \big\| x \big\|  \quad \text{ for all }
  x \in H,
\end{align}
and
\begin{align}
  \label{eq2:smoothSkh}
  \big\| S_{k,h}(t) P_h x \big\| = \big\| A_h^{\frac{1}{2}} ( \Id_H + k A_h
  )^{-j}  A_h^{-\frac{1}{2}} P_h x \big\| \le C t_{j}^{-\frac{1}{2}} \big\| x 
  \big\|_{-1} \le C t^{-\frac{1}{2}} \big\| x
  \big\|_{-1},  
\end{align}
which hold for all $x \in \dot{H}^{-1}$, $h \in (0,1]$, $k \in (0,T]$ and $t >
0$ with $t \in [t_{j-1}, t_j)$, $j = 1,2,\ldots$. For both inequalities the
constant $C$ can be chosen to be independent of $h \in (0,1]$ and $k \in
(0,T]$.

The next lemma provides several estimates for the error operator $F_{k,h}$ with
non-smooth initial data. Most of the results are well-known and are found in
\cite[Ch.~7]{thomee2006}. The missing cases have been proved in
\cite[Lem.~3.12]{kruse2013}.

\begin{lemma}
  \label{lem:Fkh1}
  Under Assumption \ref{as:Vh} the following estimates hold true:

  (i) Let $0 \le \nu \le \mu \le 2$. Then there exists a constant $C$ such that
  \begin{align*}
    \big\| F_{k,h}(t) x \big\| \le C \big( h^{\mu} + k^{\frac{\mu}{2}} \big)
    t^{-\frac{\mu - \nu}{2}} \big\| x \big\|_{\nu} \text{ for all } x \in
    \dot{H}^\nu, \; t \in (0,T), \; h, k \in (0,1].
  \end{align*}

  (ii) Let $0 \le \rho \le 1$. Then there exists a constant $C$ such that
  \begin{align*}
    \big\| F_{k,h}(t) x \big\| \le C t^{-\frac{\rho}{2}} \big\| x \big\|_{-\rho}
    \text{ for all } x \in \dot{H}^{-\rho}, \; t \in (0,T), \; h, k \in (0,1].
  \end{align*}

  (iii) Let $0 \le \rho \le 1$. Then there exists a constant $C$ such that
  \begin{align*}
    \big\| F_{k,h}(t) x \big\| \le C \big( h^{2- \rho} + k^{\frac{2 - \rho}{2}}
    \big) t^{-1} \big\| x
    \big\|_{-\rho} \text{ for all } x \in \dot{H}^{-\rho}, \; t \in (0,T), \;
    h, k \in (0,1]. 
  \end{align*}
\end{lemma}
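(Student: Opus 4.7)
The three estimates all concern the same error operator $F_{k,h}(t) = S_{k,h}(t) P_h - S(t)$. My plan is to reduce each part to a handful of endpoint bounds that are either classical (Thom\'ee, Ch.~7) or easy consequences of the smoothing inequalities \eqref{eq2:discsmoothing}--\eqref{eq2:smoothSkh}, and to cover all intermediate parameters by interpolation on the Hilbert scale $\{\dot H^s\}_{s\in[-1,2]}$.

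For part (i), I would reduce to the two endpoints $\nu = \mu$ (a stability bound obtained by splitting $F_{k,h}(t)x = (S_{k,h}(t)P_h - R_hS(t))x + (R_h - \mathrm{Id})S(t)x$, using Assumption \ref{as:Vh} on the second term and the $L^2$-stability of the discrete scheme composed with the Ritz-projection error on the first) and $\nu = 0,\ \mu = 2$ (the classical non-smooth data estimate from Thom\'ee, Ch.~7). Interpolation in the argument space then covers every intermediate pair $0 \le \nu \le \mu \le 2$.

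For part (ii), the endpoint $\rho = 0$ is the uniform stability $\|S_{k,h}(t)P_h\|_{\LB(H)} + \|S(t)\|_{\LB(H)} \le C$ given by \eqref{eq2:stabSkh} and the analytic semigroup, while $\rho = 1$ follows from \eqref{eq2:smoothSkh} paired with $\|S(t)x\| \le C t^{-1/2}\|x\|_{-1}$ (the analogous continuous smoothing, obtained by writing $x = A^{1/2}(A^{-1/2}x)$ and using $\|A^{1/2} S(t)\|_{\LB(H)} \le C t^{-1/2}$). Interpolation on the negative-index scale then gives the case $\rho \in (0, 1)$.

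Part (iii) is where the main work lies. The case $\rho = 0$ is the classical bound $\|F_{k,h}(t)x\| \le C(h^2 + k) t^{-1}\|x\|$ from Thom\'ee, Ch.~7, and it suffices to establish the endpoint $\rho = 1$, since interpolation with $\rho = 0$ then delivers every $\rho \in (0,1)$. For $\rho = 1$ I would exploit the semigroup identity
\begin{align*}
F_{k,h}(t)x = S_{k,h}(t/2) P_h F_{k,h}(t/2) x + F_{k,h}(t/2) S(t/2) x
\end{align*}
(valid when $t/2$ is a grid point, with the obvious modification otherwise). The second summand is straightforward: applying part (i) with $\mu = 1,\ \nu = 0$ to $F_{k,h}(t/2)$ together with the semigroup smoothing $\|S(t/2)x\| \le C t^{-1/2}\|x\|_{-1}$ already produces the desired $C(h + k^{1/2})t^{-1}\|x\|_{-1}$. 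The first summand is the delicate one: the naive combination of stability of $S_{k,h}(t/2)P_h$ with part (ii) only yields $C t^{-1/2}\|x\|_{-1}$, so one has to exploit discrete smoothing on the outer factor and a consistency gain on the inner factor simultaneously, using that $\|A^{1/2}z\| = \|A_h^{1/2}z\|$ for $z \in V_h$ to transfer \eqref{eq2:discsmoothing} into an $H\to\dot H^1$ bound. This dual-smoothing step is the only non-routine ingredient and is the precise content of the missing cases beyond Thom\'ee; the rest is bookkeeping.
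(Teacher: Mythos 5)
The paper does not actually prove Lemma \ref{lem:Fkh1}: it cites Thom\'ee, Ch.~7, for the classical cases and \cite[Lem.~3.12]{kruse2013} for the missing ones, so the comparison here is against the standard arguments in those references. Your overall plan --- reduce (i) to the corners $(\nu,\mu)\in\{(0,0),(2,2),(0,2)\}$, reduce (ii) to $\rho\in\{0,1\}$ via \eqref{eq2:stabSkh} and \eqref{eq2:smoothSkh}, reduce (iii) to $\rho\in\{0,1\}$, and fill in by interpolation on the scale $\dot H^s$ --- is exactly the standard route, and the interpolation bookkeeping (including $(h^{\mu_0}+k^{\mu_0/2})^{1-\theta}(h^{\mu_1}+k^{\mu_1/2})^{\theta}\le C(h^{\mu_\theta}+k^{\mu_\theta/2})$ by weighted AM--GM) goes through. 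Parts (i) and (ii) and the $\rho=0$ case of (iii) are therefore fine modulo the citations you yourself invoke.

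The gap is in the one step you correctly identify as the crux, the endpoint $\rho=1$ of (iii): the first summand $S_{k,h}(t/2)P_hF_{k,h}(t/2)x$ of your splitting is never actually estimated. The sentence ``exploit discrete smoothing on the outer factor and a consistency gain on the inner factor simultaneously'' states the target, not an argument, and the ingredients you list do not combine to reach it. Concretely: stability of the outer factor together with part (ii) for the inner factor gives only $Ct^{-1/2}\|x\|_{-1}$ with no power of $h$ or $k$, while using the smoothing $\|S_{k,h}(t/2)z_h\|\le Ct^{-1/2}\|A_h^{-1/2}z_h\|$ on the outer factor and \eqref{eq2:discnorm} reduces the task to the \emph{negative-norm} error estimate $\|F_{k,h}(t/2)x\|_{-1}\le C(h+k^{1/2})t^{-1/2}\|x\|_{-1}$, which appears nowhere in your toolkit and is, by self-adjointness of $F_{k,h}(t)$ and duality, essentially equivalent to the gradient-norm nonsmooth-data estimate $\|F_{k,h}(s)y\|_{1}\le C(h+k^{1/2})s^{-1}\|y\|$ --- i.e.\ to the very content of the missing case. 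The identity $\|A^{1/2}z\|=\|A_h^{1/2}z\|$ for $z\in V_h$ only converts \eqref{eq2:discsmoothing} into the smoothing bound $\|S_{k,h}(s)P_hy\|_{1}\le Cs^{-1/2}\|y\|$ for the discrete evolution alone; it says nothing about the \emph{error}. To close the argument you would either have to prove the $\dot H^1$-norm nonsmooth-data error estimate directly (splitting off the Ritz projection as in Thom\'ee's Theorems 3.5 and 7.9 and treating the temporal error separately) and then dualize, or import it as an additional cited endpoint --- in which case you should also record that $F_{k,h}(t)$ is self-adjoint on $H$ (it is, since $A$, $A_h$ and $P_h$ are), as the duality step silently uses this.
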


The next assumption is concerned with the stability of the orthogonal projector
$P_h$ with respect to the norm $\| \cdot \|_{1}$. It only appears in the proof
of Lemma \ref{lem:Fkh2} as shown in \cite[Lem.~3.13]{kruse2013}. 

\begin{assumption}
  \label{as:Vh2}
  Let a family $(V_h)_{h \in (0,1]}$ of finite dimensional subspaces of
  $\dot{H}^1$ be given such that there exists a constant $C$ with
  \begin{align}
    \| P_h x \|_{1} \le C \| x \|_{1} \quad \text{ for all } x \in \dot{H}^1,
    \; h \in (0, 1].
    \label{eq3:stabPh}
  \end{align}  
\end{assumption}

The last lemma of this section is concerned with sharper integral versions of
the error estimate in Lemma \ref{lem:Fkh1} \emph{(i)} and \emph{(iii)}. A proof
is given in \cite[Lem.~3.13]{kruse2013}. 

\begin{lemma}
  \label{lem:Fkh2}
  Let $0 \le \rho \le 1$. Under Assumption \ref{as:Vh} the operator $F_{k,h}$
  satisfies the following estimates.

  (i) There exists a constant $C$ such that
  \begin{align*}
    \Big\| \int_{0}^{t} F_{k,h}(\sigma) x \diff{\sigma} \Big\| \le C \big(
    h^{2 - \rho} + k^{\frac{2 - \rho}{2}} \big) \big\| x \big\|_{-\rho} 
  \end{align*}
  for all $x \in \dot{H}^{-\rho}$, $t > 0$, and $h, k \in (0,1]$.

  (ii) Under the additional Assumption \ref{as:Vh2} there exists a constant $C$
  such that 
  \begin{align*}
    \Big( \int_{0}^{t} \big\| F_{k,h}(\sigma) x \big\|^2 \diff{\sigma}
    \Big)^{\frac{1}{2}} \le C \big( h^{1 + \rho} + k^{\frac{1 + \rho}{2}} \big)
    \big\| x \big\|_{\rho} 
  \end{align*}
  for all $x \in \dot{H}^{\rho}$, $t > 0$, and $h, k \in (0,1]$.
\end{lemma}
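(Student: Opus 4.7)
Direct integration of the pointwise bounds in Lemma~\ref{lem:Fkh1} fails to produce the sharp orders: squaring the $\sigma^{-1/2}$ factor from Lemma~\ref{lem:Fkh1}~(i) with $\mu=1+\rho$, $\nu=\rho$ gives $\sigma^{-1}$, which is not integrable, and integrating Lemma~\ref{lem:Fkh1}~(iii) directly produces a logarithmic factor $\log(t/\tau)$. The strategy is therefore to extract cancellation from the time integral itself rather than estimate the integrand pointwise.

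For part~(i) I would exploit explicit primitives. One has $\int_0^t S(\sigma)x\diff{\sigma}=A^{-1}(\Id_H-S(t))x$ and, at the grid points $t=t_n$, the geometric sum identity
\begin{align*}
\int_0^{t_n} S_{k,h}(\sigma)P_h x\diff{\sigma} = k\sum_{j=1}^n(\Id_H + kA_h)^{-j}P_h x = A_h^{-1}(\Id_H - S_{k,h}^n)P_h x.
\end{align*}
Subtraction then yields
\begin{align*}
\int_0^{t_n}F_{k,h}(\sigma)x\diff{\sigma}
=\bigl(A_h^{-1}P_h-A^{-1}\bigr)x
-\bigl(A_h^{-1}S_{k,h}^n P_h-A^{-1}S(t_n)\bigr)x.
\end{align*}
Setting $y:=A^{-1}x\in\dot{H}^{2-\rho}$ and using the Galerkin identity $A_h R_h = P_h A$ on $\dot{H}^1$, the first bracket reduces to the Ritz error $R_h y - y$ and is bounded by $Ch^{2-\rho}\|x\|_{-\rho}$ via \eqref{eq3:Rh} interpolated to the fractional exponent $2-\rho\in[1,2]$. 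The second bracket equals $S_{k,h}^n R_h y - S(t_n) y$ and can be split (modulo a harmless off-by-one from the piecewise-constant convention \eqref{eq2:defSkh}) into $F_{k,h}(t_n)y$ plus a corrector coming from the Ritz error of $y$; these are bounded respectively via Lemma~\ref{lem:Fkh1}~(i) with $\mu=\nu=2-\rho$ and via \eqref{eq3:Rh} combined with the stability bound \eqref{eq2:stabSkh}. The case $t\in(t_{n-1},t_n)$ is reduced to a grid point by absorbing a boundary integral of length $\le k$ using the smoothing inequality \eqref{eq2:smoothSkh}.

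For part~(ii) I would use the decomposition
\begin{align*}
F_{k,h}(\sigma)x = (R_h - \Id_H)S(\sigma)x + \theta(\sigma),\qquad \theta(\sigma):= S_{k,h}(\sigma)P_h x - R_h S(\sigma)x\in V_h.
\end{align*}
For the Ritz term the interpolated estimate $\|(R_h - \Id_H)z\|\le Ch^{1+\rho}\|z\|_{1+\rho}$ combined with the parabolic energy identity
\begin{align*}
\int_0^\infty\|A^{(1+\rho)/2}S(\sigma)x\|^2\diff{\sigma} \le C\|x\|_\rho^2,
\end{align*}
which follows from $-\tfrac{d}{d\sigma}\|A^{\rho/2}S(\sigma)x\|^2 = 2\|A^{(1+\rho)/2}S(\sigma)x\|^2$, immediately yields the $L^2$-in-time bound $Ch^{1+\rho}\|x\|_\rho$, free of the usual logarithmic loss. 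The discrete remainder $\theta$ satisfies a perturbed discrete parabolic equation $D_t\theta^n+A_h\theta^n=\eta^n$, whose residual $\eta^n$ gathers the commutator $(R_h-P_h)AS(t_n)x$ and the time-discretisation error of $R_h S(\cdot)x$. A standard discrete energy estimate combined with duality then yields the temporal contribution $Ck^{(1+\rho)/2}\|x\|_\rho$. Assumption~\ref{as:Vh2} enters precisely here: the $\dot{H}^1$-stability $\|P_h z\|_1 \le C\|z\|_1$ is what allows one to control the commutator in the discrete negative norm $\|A_h^{-1/2}P_h\cdot\|$ and to handle the initial deviation $\|\theta^0\|=\|P_h x - R_h x\|$ for fractional $\rho\in[0,1]$, typically obtained by interpolating between the integer values $\rho\in\{0,1\}$.

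The main obstacle will be the discrete energy estimate for $\theta$. Whereas on the continuous side the parabolic half-derivative gain is essentially free, on the discrete side the residual $\eta^n$ must be carefully placed in the $A_h^{-1/2}$-weighted norm, and the $\dot{H}^1$-stability of $P_h$ from Assumption~\ref{as:Vh2} is what transfers that $\dot{H}^{-1}$-type bound back to $\|x\|_\rho$ uniformly in $\rho$, avoiding any logarithmic loss.
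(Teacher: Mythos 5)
The paper itself contains no proof of this lemma: it is quoted verbatim with the pointer ``A proof is given in \cite[Lem.~3.13]{kruse2013}'' (the same estimates are Lemma~3.3 of \cite{kruse2012}), so there is no in-paper argument to compare against; what follows measures your plan against the standard proofs in those references, which it largely follows.

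Your part~(i) is essentially that standard argument and is complete. The geometric-sum identity $k\sum_{j=1}^n(\Id_H+kA_h)^{-j}=A_h^{-1}\bigl(\Id_H-(\Id_H+kA_h)^{-n}\bigr)$, the relation $A_h^{-1}P_h=R_hA^{-1}$ (valid here since $y=A^{-1}x\in\dot{H}^{2-\rho}\subset\dot{H}^1$), the Ritz estimate \eqref{eq3:Rh} interpolated to $s=2-\rho\in[1,2]$, and Lemma~\ref{lem:Fkh1}~(i) with $\mu=\nu=2-\rho$ assemble exactly as you describe; the boundary strip of length at most $k$ is indeed absorbed by Lemma~\ref{lem:Fkh1}~(ii). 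Your diagnosis of why naive integration of the pointwise bounds loses a logarithm is also correct and is the right motivation for the whole lemma.

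Part~(ii) is where the plan has a genuine gap. The Ritz half is fine: the identity $\int_0^\infty\|A^{(1+\rho)/2}S(\sigma)x\|^2\diff{\sigma}=\tfrac12\|x\|_\rho^2$ does deliver $Ch^{1+\rho}\|x\|_\rho$ without loss. But the entire difficulty of the statement is concentrated in your sentence ``a standard discrete energy estimate combined with duality then yields the temporal contribution,'' which is asserted rather than carried out. To execute it you need at least two ingredients you do not supply: (a) the discrete analogue of the continuous square-function identity, namely $\bigl(k\sum_{n\ge1}\|A_h^{1/2}(\Id_H+kA_h)^{-n}v_h\|^2\bigr)^{1/2}\le C\|v_h\|$ together with its fractional interpolates (spectral calculus for the self-adjoint, positive definite $A_h$); and (b) control of the residual $\eta^n$ near $n=1$ for nonsmooth data: for $x\in\dot{H}^\rho$ with $\rho<1$ both $AS(t_n)x$ and the one-step time-discretisation error of $R_hS(\cdot)x$ are singular of order $t_n^{-1+\rho/2}$ as $t_n\downarrow0$, and it is precisely at this point that a crude Cauchy--Schwarz step reintroduces the logarithm the lemma is designed to avoid. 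Note also that for $\rho<1$ the quantity $R_hx$, and hence your initial deviation $\theta^0=P_hx-R_hx$, is not even defined ($R_h$ acts only on $\dot{H}^1$), so the $\theta$--$\varrho$ splitting as written cannot cover the endpoint $\rho=0$; the cited proofs instead establish the two integer endpoints $\rho=0$ and $\rho=1$ by different arguments and interpolate the resulting operator bounds, whereas you invoke interpolation only for $\theta^0$. The architecture is right, but the discrete-energy step --- the actual content of part~(ii) --- still has to be supplied.
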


% include section 3
\section{Stability and consistency of numerical one-step schemes}
\label{sec:numscheme}

This section contains the somewhat more abstract framework of the convergence
analysis. We generalize the notion of stability and consistency from
\cite{beynkruse2010, kruse2011} to Hilbert spaces and we derive a set of
sufficient conditions for the so-called bistability. Finally, a decomposition of
the local truncation error gives a blueprint for the proof of consistency of
the Milstein-Galerkin finite element scheme in Section \ref{subsec:Milcons}.

\subsection{Definition of the abstract one-step scheme}
\label{subsec:numschemedef}

As above, let $\mathcal{T}_k := \{t_n \, : \, n  = 0,1,\ldots,N_k \}$ be  
the set of temporal grid points for a given equidistant time step
size $k \in (0,T]$ and recall that $N_k \in \N$ is given by $N_k k \le T < (N_k
+ 1)k$.  

The first important ingredient, which determines the numerical scheme, is a
family of bounded linear operators $S_k \colon H \to H$, $k \in (0,T]$, which
are supposed to approximate the semigroup $S(t)$, $t \in [0,T]$, in a suitable
sense. 

Further, for the definition of the second ingredient, let us
introduce the set $\mathbb{T} \subset [0,T) \times (0,T]$, which is given by 
\begin{align*}
  \mathbb{T} := \{ (t,k) \in [0,T) \times (0,T] \; : \; t + k \le T \}. 
\end{align*}
The so-called \emph{increment function} is a mapping
$\Phi \colon H \times \mathbb{T} \times \Omega \to H$ with the property that
for every  $(t,k) \in \mathbb{T}$ the mapping 
$(x,\omega) \mapsto \Phi(x,t,k)(\omega)$ is measurable with respect to $\B(H)
\otimes \F_{t + k}/\B(H)$.  

Then, for every $k \in (0,T]$ the discrete time stochastic process   
$X_k \colon \mathcal{T}_k \times \Omega \to H$, 
is given by the recursion 
\begin{align}
  \label{eq3:scheme}
  \begin{split}
    X_k(t_0) &:= \xi,\\
    X_k(t_n) &:= S_k X_k(t_{n-1}) + \Phi\big( X_k(t_{n-1}), t_{n-1}, k \big)
  \end{split}
\end{align}
for every $n \in \{1,\ldots,N_k\}$,
where $\xi \colon \Omega \to H$, is an $\F_{t_0}/\B(H)$-measurable
random variable representing the initial value of the numerical scheme. It
follows directly that $X_k(t_n)$ is $\F_{t_n}/\B(H)$-measurable for all $n \in
\{1,\ldots,N_k\}$. 

In Section \ref{sec:Mil} we show how the Milstein Galerkin
finite element scheme fits into the framework of \eqref{eq3:scheme}.

After having introduced the abstract numerical scheme, we recall the
cornerstones of the stability and consistency concept for one-step methods from
\cite{beynkruse2010,kruse2011}. First, let us introduce the family of 
linear spaces of adapted, $p$-integrable grid functions
\begin{align*}
  %\label{eq3:G_Np}
  \mathcal{G}_p(\mathcal{T}_k) := \big\{ Z \colon \mathcal{T}_k \times \Omega
  \to H \; : \; Z(t_n) \in L_p(\Omega,\mathcal{F}_{t_n},\P;H) \text{ for all }
  n \in \{0,1, \ldots,N_k \} \big\} 
\end{align*}
for all $p \in [2,\infty)$ and $k \in (0,T]$. The spaces
$\mathcal{G}_p(\mathcal{T}_k)$ are endowed with the two norms
\begin{align}
  \| Z \|_{0,p} := \max_{n \in \{0,\ldots,N_k\}} \big\| Z(t_n)
  \big\|_{L_p(\Omega;H)}
  \label{eq3:norm0}
\end{align}
and
\begin{align}
  \| Z \|_{-1,p} := \| Z(t_0) \|_{L_p(\Omega;H)} + \max_{n \in
  \{1,\ldots,N_k\}} \Big\| \sum_{j = 1}^n S_k^{n-j} Z(t_j)
  \Big\|_{L_p(\Omega;H)} 
  \label{eq3:norm1}
\end{align}
for all $Z \in \mathcal{G}_p(\mathcal{T}_k)$. The norm $\| \cdot \|_{-1,p}$
is called \emph{(stochastic) Spijker norm} and known to result into sharp
and two-sided estimates of the error of convergence, see for example
\cite[p.~438]{hairer1993} as well as
\cite{beynkruse2010,spijker1968,spijker1971,stummel1973}. 

Next, for $p\in[2,\infty)$, let us define the family of nonlinear operators
$\mathcal{R}_k \colon \mathcal{G}_p(\mathcal{T}_k) \to
\mathcal{G}_p(\mathcal{T}_k)$, which for $k \in (0,T]$ are given by  
\begin{align}
  \begin{split}
    \mathcal{R}_k[Z](t_0) &= Z(t_0) - \xi, \\
    \mathcal{R}_k[Z](t_n) &= Z(t_n) - S_k Z(t_{n-1}) -
    \Phi(Z(t_{n-1}),t_{n-1},k), \quad n \in \{1,\ldots,N_k\}. 
  \end{split}
  \label{eq3:opRN}
\end{align}
Below we show that the operators $\mathcal{R}_k$ are well-defined under
Assumptions \ref{as:initial} and \ref{as:stab} for all $k \in (0,T]$. Further,
under these conditions it holds that $\mathcal{R}_k[X_k]=  0 \in
\mathcal{G}_p(\mathcal{T}_k)$ for all $k \in (0,T]$, where $X_k \in
\mathcal{G}_p(\mathcal{T}_k)$ is the discrete time stochastic process generated
by the numerical scheme \eqref{eq3:scheme}. The mappings $\mathcal{R}_k$ are
therefore called \emph{residual operators} associated to the numerical scheme
\eqref{eq3:scheme}. 

The following definition contains our notion of stability.

\begin{definition}
  \label{def:stab}
  Let $p \in [2,\infty)$. 
  The numerical scheme \eqref{eq3:scheme} is called \emph{bistable} (with
  respect to the norms $\|\cdot\|_{0,p}$ and $\|\cdot\|_{-1,p}$) if the
  residual operators $\mathcal{R}_k \colon \mathcal{G}_p(\mathcal{T}_k) \to 
  \mathcal{G}_p(\mathcal{T}_k)$ are well-defined and bijective for all $k \in
  (0,T]$ and if there exists a constant
  $C_{\mathrm{Stab}}$ independent of $k \in (0,T]$ such that 
  \begin{align}
    \label{eq3:bistab}
    \frac{1}{C_{\mathrm{Stab}}} \big\| \mathcal{R}_k[Y] -
    \mathcal{R}_k[Z] \big\|_{-1,p} \le 
    \| Y - Z \|_{0,p} \le C_{\mathrm{Stab}} \big\| \mathcal{R}_k[Y] -
    \mathcal{R}_k[Z] \big\|_{-1,p}
  \end{align}
  for all $k \in (0,T]$ and $Y, Z \in \mathcal{G}_p(\mathcal{T}_k)$. 
\end{definition}

Therefore, for a bistable numerical scheme, the distance between two arbitrary
adapted grid functions can be estimated by the distance of their residuals
measured with respect to the stochastic Spijker norm and vice versa. In Section
\ref{sec:stab} we show that Assumptions \ref{as:initial} to \ref{as:stab} are
sufficient conditions for the stability of the numerical scheme
\eqref{eq3:scheme}. 

The counterpart of the notion of stability is the so-called \emph{consistency}
of the numerical scheme, which we define in the same way as in 
\cite{beynkruse2010,kruse2011}. For this we denote by
$Z|_{\mathcal{T}_k} \in \mathcal{G}_p(\mathcal{T}_k)$ the \emph{restriction} 
of a $p$-fold integrable, adapted and continuous stochastic process $Z \colon
[0,T] \times \Omega \to H$ to the set $\mathcal{G}_p(\mathcal{T}_k)$, that is
\begin{align*}
    Z|_{\mathcal{T}_k}(t_n) := Z(t_n), \quad n \in
    \{0,\ldots,N_k\}.
\end{align*}

\begin{definition}
  \label{def:lvlcons}
  Let $p \in [2, \infty)$. We say that the numerical scheme \eqref{eq3:scheme}
  is \emph{consistent} of order $\gamma > 0$ with respect to the SPDE
  \eqref{eq1:SPDE} if
  there exists a constant $C_\mathrm{Cons}$ independent of $k \in (0,T]$ such
  that 
  \begin{align*}
    \big\| \mathcal{R}_k [ X|_{\mathcal{T}_k} ] \big\|_{-1,p} \le
    C_{\mathrm{Cons}} k^{\gamma} 
  \end{align*}
  for all $k \in (0,T]$, where $X$ is the mild solution to \eqref{eq1:SPDE}.
\end{definition}

The term $\| \mathcal{R}_k [ X|_{\mathcal{T}_k} ] \|_{-1,p}$ is called
\emph{local truncation error} or \emph{consistency error}.
Finally, we introduce the notion of strong convergence. 

\begin{definition}
  \label{def:conv}
  Let $p \in [2, \infty)$. We say that the numerical scheme \eqref{eq3:scheme} 
  is \emph{strongly convergent} of order $\gamma > 0$, if there exists a
  constant $C$ independent of $k \in (0,T]$ such that
  \begin{align*}
    \big\| X_k - X|_{\mathcal{T}_k} \big\|_{0,p} \le C k^{\gamma} 
  \end{align*}
  for all $k \in (0,T]$, where $X_k \in \mathcal{G}_p(\mathcal{T}_k)$, $k \in
  (0,T]$ are the grid functions generated by the numerical scheme
  \eqref{eq3:scheme} and $X$ denotes the mild solution to \eqref{eq1:SPDE}. 
\end{definition}

\begin{theorem}
  \label{th:lvlconv}
  A bistable numerical scheme of the form \eqref{eq3:scheme} is strongly
  convergent of order $\gamma > 0$ if and only if it is consistent of order
  $\gamma > 0$. In particular, it holds
  \begin{align*}
    \frac{1}{C_{\mathrm{Stab}}} \big\| \mathcal{R}_k[X|_{\mathcal{T}_k} ]
    \big\|_{-1,p} \le \big\| X_k - X|_{\mathcal{T}_k} \big\|_{0,p} 
    \le C_{\mathrm{Stab}} \big\| \mathcal{R}_k[X|_{\mathcal{T}_k} ]
    \big\|_{-1,p}  
  \end{align*}
  for all $k \in (0,T]$, where $X_k \in \mathcal{G}_p(\mathcal{T}_k)$, $k \in
  (0,T]$, denotes the family of grid functions generated by the numerical
  scheme \eqref{eq3:scheme} and $X$ is the mild solution to \eqref{eq1:SPDE}. 
\end{theorem}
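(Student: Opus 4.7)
The plan is to deduce the two-sided bound directly from the bistability inequality \eqref{eq3:bistab} by exploiting the fact that the numerical solution $X_k$ is the unique zero of the residual operator $\mathcal{R}_k$. The equivalence between consistency and strong convergence of order $\gamma$ is then an immediate consequence of this two-sided bound combined with Definitions \ref{def:lvlcons} and \ref{def:conv}.

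First I would verify that $\mathcal{R}_k[X_k] = 0$ in $\mathcal{G}_p(\mathcal{T}_k)$ for every $k \in (0,T]$. This is a direct calculation: the initial condition in \eqref{eq3:scheme} gives $X_k(t_0) = \xi$, so that $\mathcal{R}_k[X_k](t_0) = X_k(t_0) - \xi = 0$; and for $n \ge 1$, the recursion $X_k(t_n) = S_k X_k(t_{n-1}) + \Phi(X_k(t_{n-1}),t_{n-1},k)$ inserted into the defining relation \eqref{eq3:opRN} yields $\mathcal{R}_k[X_k](t_n) = 0$. Of course, for this to be meaningful I implicitly use that $X_k \in \mathcal{G}_p(\mathcal{T}_k)$ and that $\mathcal{R}_k$ is well-defined on this space, both of which are guaranteed by the standing hypotheses (bistability in particular includes well-definedness of $\mathcal{R}_k$ by Definition \ref{def:stab}).

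Next I would apply the bistability inequality \eqref{eq3:bistab} with the choice $Y = X_k$ and $Z = X|_{\mathcal{T}_k}$, where both objects lie in $\mathcal{G}_p(\mathcal{T}_k)$: the numerical grid function by construction, and the restriction of the mild solution by the regularity \eqref{eq2:reg} together with the adaptedness of $X$. Using $\mathcal{R}_k[X_k] = 0$, the norm $\|\mathcal{R}_k[X_k] - \mathcal{R}_k[X|_{\mathcal{T}_k}]\|_{-1,p}$ reduces to $\|\mathcal{R}_k[X|_{\mathcal{T}_k}]\|_{-1,p}$, and \eqref{eq3:bistab} immediately produces the displayed two-sided estimate.

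Finally, the equivalence of consistency and strong convergence of order $\gamma > 0$ follows by taking suprema over $k \in (0,T]$ of both sides after dividing by $k^{\gamma}$: the upper bound transfers an $O(k^\gamma)$ consistency error into an $O(k^\gamma)$ strong error, while the lower bound shows that a strong convergence rate $\gamma$ forces the local truncation error to be $O(k^\gamma)$ as well. I do not anticipate any serious obstacle here: essentially the entire content of the statement is packaged into the bistability hypothesis, and the only non-trivial input is the algebraic identity $\mathcal{R}_k[X_k] = 0$, which is built into the very definitions \eqref{eq3:scheme} and \eqref{eq3:opRN}.
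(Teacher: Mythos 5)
Your proposal is correct and follows essentially the same route as the paper: both arguments rest on the identity $\mathcal{R}_k[X_k]=0$ and then apply the bistability inequality \eqref{eq3:bistab} with $Y=X_k$ and $Z=X|_{\mathcal{T}_k}$, after which the equivalence of consistency and strong convergence is immediate from the definitions. Your write-up merely spells out a few details (the verification of $\mathcal{R}_k[X_k]=0$ and the membership $X|_{\mathcal{T}_k}\in\mathcal{G}_p(\mathcal{T}_k)$) that the paper leaves implicit.
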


\begin{proof}
  First, let us recall that the residual operators $\mathcal{R}_k \colon
  \mathcal{G}_p(\mathcal{T}_k) \to \mathcal{G}_p(\mathcal{T}_k)$ satisfy
  $\mathcal{R}_k [ X_k ] = 0$ for every $k \in (0,T]$. Thus,
  by the bistability of the numerical scheme and we obtain 
  \begin{align*}
    \frac{1}{C_{\mathrm{Stab}}} \big\|
    \mathcal{R}_k\big[X|_{\mathcal{T}_k} \big] \big\|_{-1,p}
    \le \big\| X_k - X|_{\mathcal{T}_k} \big\|_{0,p} \le  
    C_{\mathrm{Stab}} \big\|
    \mathcal{R}_k\big[X|_{\mathcal{T}_k} \big] \big\|_{-1,p}.  
  \end{align*}
  Consequently, the assertion follows directly from the definitions of
  consistency and strong convergence.  
\end{proof}

\subsection{Assumptions on the numerical scheme}
\label{subsec:Assumptions}

In this subsection some assumptions on the abstract numerical
scheme \eqref{eq3:scheme} are collected, which assure its stability as we will
show in Section~\ref{sec:stab}.  

\begin{assumption}[Initial value]
  \label{as:initial}
  Let $p \in [2,\infty)$. 
  The initial condition $\xi \colon \Omega \to H$ is a
  $p$-fold integrable and $\mathcal{F}_0/\mathcal{B}(H)$-measurable random
  variable. 
\end{assumption}

The next two assumptions are concerned with the family of linear operators 
$S_k$, $k \in (0,T]$, and the increment function $\Phi$. 

\begin{assumption}[Linear stability]
  \label{as:linstab}
  For the family of bounded linear operators $S_{k} \colon H \to H$, $k \in
  (0,T]$, there exists a constant $C_S$ independent of $k \in (0,T]$ such that 
  \begin{align*}
      \sup_{k \in (0,T]} \sup_{n \in \{1,\ldots,N_k\}} \| S_k^n \|_{\LB(H)} \le
      C_S. 
  \end{align*}  
\end{assumption}

\begin{assumption}[Nonlinear stability]
  \label{as:stab}
  Let $p \in [2,\infty)$ be the same as in Assumption \ref{as:initial}.
  For every  $(t,k) \in \mathbb{T}$ the mapping $\Phi(\cdot,t,k) \colon H
  \times \Omega \to H$ is measurable with respect to $\B(H) \otimes \F_{t +
  k}/\B(H)$. Further, there exists a constant $C_\Phi$ such that
  \begin{align}
    \label{eq3:stab1}
    &\Big\| \sum_{j = m}^{n} S^{n-j}_k \Phi(0,t_{j-1},k) \Big\|_{L_p(\Omega;H)}
    \le C_\Phi  \big(t_n - t_{m-1} \big)^{\frac{1}{2}}
  \end{align}
  for all $k \in (0,T]$ and $n,m \in \{1, \ldots, N_k\}$ with $n \ge m$.
  In addition, it holds
  \begin{align}
    \label{eq3:stab2}
    \begin{split}
      &\Big\| \sum_{j = 1}^{n} S_k^{n-j} \big(
      \Phi( Y(t_{j-1}),t_{j-1}, k ) 
      - \Phi(Z(t_{j-1}),t_{j-1},k) \big) \Big\|_{L_p(\Omega;H)}^2\\
      &\quad \le C_\Phi^2 k \sum_{j = 1}^{n} 
      \big( t_n - t_{j-1} \big)^{-\frac{1}{2}}
      \big\| Y(t_{j-1}) - Z(t_{j-1}) \big\|_{L_p(\Omega;H)}^2  
    \end{split}
  \end{align}
  for all $k \in (0,T]$, $n \in \{1,\ldots,N_k\}$ and all $Y, Z \in
  \mathcal{G}_p(\mathcal{T}_k)$.
\end{assumption}

Let us remark that from Assumption \ref{as:stab} it follows directly that
\begin{align}
    \label{eq3:stab1b}
    \big\| \Phi( Z(t_{n-1}) ,t_{n-1}, k)\big\|_{L_p(\Omega;H)} \le C_\Phi
    k^{\frac{1}{4}} \big(T^{\frac{1}{4}} + \| Z(t_{n-1})
    \|_{L_p(\Omega;H)} \big)  
\end{align}
for all $k \in (0,T]$, $n \in \{1,\ldots,N_k\}$ and all $Z \in
\mathcal{G}_p(\mathcal{T}_k)$. Indeed, fix $k \in (0,T]$ and $n 
\in \{1,\ldots,N_k\}$ and define $\hat{Z} \in \mathcal{G}_p(\mathcal{T}_k)$ by
\begin{align*}
  \hat{Z}(t_j) = 
  \begin{cases}
    Z(t_{n-1}),& j = n-1,\\
    0,& j \neq n-1.
  \end{cases}  
\end{align*}
Then, we get from \eqref{eq3:stab2} 
\begin{align*}
  & \big\| \Phi\big( Z(t_{n-1}), t_{n-1}, k \big) \big\|_{L_p(\Omega;H)} \\
  &\quad = \Big\| \sum_{j = 1}^{n} \Big[ 
  S^{n-j}_k \big( \Phi\big(\hat{Z}(t_{j-1}),t_{j-1},k\big) - 
  \Phi\big(0, t_{j-1},k \big) \big) \Big] 
  %\\ & \qquad
  +  \Phi\big( 0 , t_{n-1}, k\big) \Big\|_{L_p(\Omega;H)}\\ 
  &\quad \le C_\Phi k^{\frac{1}{4}} \big\| Z(t_{n-1})
    \big\|_{L_p(\Omega;H)} + \big\| \Phi\big( 0, t_{n-1}, k \big)
    \big\|_{L_p(\Omega;H)}. 
\end{align*}
Further, \eqref{eq3:stab1} applied with $n= m$ yields
\begin{align*}
  \big\| \Phi\big( 0, t_{n-1}, k \big)
  \big\|_{L_p(\Omega;H)} \le C_\Phi T^{\frac{1}{4}} k^{\frac{1}{4}}  
\end{align*}
which completes the proof of \eqref{eq3:stab1b}.

\subsection{Bistability of the numerical scheme}
\label{sec:stab}

In this subsection we demonstrate that Assumptions \ref{as:initial} to
\ref{as:stab} are sufficient for the bistability of the numerical scheme.

\begin{theorem}
  Let Assumptions \ref{as:initial} to \ref{as:stab} be satisfied with $p \in
  [2,\infty)$. Then, the mappings 
  $\mathcal{R}_k\colon \mathcal{G}_p(\mathcal{T}_k) \to
  \mathcal{G}_p(\mathcal{T}_k)$ are 
  well-defined and bijective for all $k \in (0,T]$. Further, the numerical
  scheme \eqref{eq3:scheme} is bistable.  
  \label{th:stab}
\end{theorem}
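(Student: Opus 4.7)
My plan is to proceed via a discrete variation-of-constants argument: verify that $\mathcal{R}_k$ is a well-defined bijection on $\mathcal{G}_p(\mathcal{T}_k)$, then derive both inequalities of \eqref{eq3:bistab} from an explicit formula for $Y - Z$ in terms of $\eta := \mathcal{R}_k[Y] - \mathcal{R}_k[Z]$. First I would check well-definedness: $\mathcal{R}_k[Z](t_0) = Z(t_0) - \xi$ is $\mathcal{F}_{t_0}$-measurable and $p$-integrable by Assumption \ref{as:initial}, while for $n \ge 1$ adaptedness of $\mathcal{R}_k[Z](t_n)$ comes from the joint measurability of $\Phi$ stipulated in Assumption \ref{as:stab} and $L_p$-integrability follows from Assumption \ref{as:linstab} together with \eqref{eq3:stab1b}. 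Bijectivity is then a direct consequence of the discrete variation-of-constants recursion: given $\eta \in \mathcal{G}_p(\mathcal{T}_k)$, the unique preimage is $Z(t_0) := \xi + \eta(t_0)$ and $Z(t_n) := S_k Z(t_{n-1}) + \Phi(Z(t_{n-1}), t_{n-1}, k) + \eta(t_n)$, and $Z \in \mathcal{G}_p(\mathcal{T}_k)$ propagates by induction using the same measurability and integrability facts.

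For the upper estimate in \eqref{eq3:bistab}, subtracting the recursions for $Y$ and $Z$ and iterating gives
\begin{align*}
  Y(t_n) - Z(t_n) = S_k^n \eta(t_0) + \sum_{j=1}^{n} S_k^{n-j}\bigl(\Phi(Y(t_{j-1}),t_{j-1},k) - \Phi(Z(t_{j-1}),t_{j-1},k)\bigr) + \sum_{j=1}^{n} S_k^{n-j}\eta(t_j).
\end{align*}
Taking the $L_p(\Omega;H)$ norm, squaring, and applying Assumption \ref{as:linstab} to the first term, \eqref{eq3:stab2} to the second, and the definition of the Spijker norm to the third yields
\begin{align*}
  a_n^2 \le 3(C_S^2 + 1)\|\eta\|_{-1,p}^2 + 3 C_\Phi^2\, k \sum_{j=1}^{n}(t_n - t_{j-1})^{-1/2}\, a_{j-1}^2,
\end{align*}
where $a_j := \|Y(t_j) - Z(t_j)\|_{L_p(\Omega;H)}$. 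The hard part will be handling this recursion: because the kernel $(t_n - t_{j-1})^{-1/2}$ is weakly singular, classical discrete Gronwall does not apply. I would therefore invoke a discrete Gronwall inequality of Dixon--McKee type (for which the uniform bound $k\sum_{j=1}^n (t_n - t_{j-1})^{-1/2} \le C\sqrt{T}$ is the key input) to obtain $\max_{0 \le n \le N_k} a_n \le C_{\mathrm{Stab}}\|\eta\|_{-1,p}$ with a constant independent of $k$.

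For the matching lower estimate I would rearrange the same identity to isolate the Spijker summation:
\begin{align*}
  \sum_{j=1}^{n} S_k^{n-j}\eta(t_j) = \bigl(Y(t_n) - Z(t_n)\bigr) - S_k^n \eta(t_0) - \sum_{j=1}^{n} S_k^{n-j}\bigl(\Phi(Y(t_{j-1}),t_{j-1},k) - \Phi(Z(t_{j-1}),t_{j-1},k)\bigr).
\end{align*}
Taking $L_p(\Omega;H)$ norms, the first summand is bounded by $\|Y - Z\|_{0,p}$, the second by $C_S\,\|\eta(t_0)\|_{L_p(\Omega;H)} = C_S\,\|Y(t_0)-Z(t_0)\|_{L_p(\Omega;H)} \le C_S\,\|Y - Z\|_{0,p}$, and the third by $C_\Phi (k\sum_j (t_n - t_{j-1})^{-1/2})^{1/2}\,\|Y - Z\|_{0,p}$ via \eqref{eq3:stab2}, which is $\le C\|Y - Z\|_{0,p}$ by the uniform kernel bound (no Gronwall needed this time). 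Combined with the trivial bound $\|\eta(t_0)\|_{L_p(\Omega;H)} \le \|Y - Z\|_{0,p}$, this proves $\|\eta\|_{-1,p} \le C_{\mathrm{Stab}}\|Y - Z\|_{0,p}$ and completes \eqref{eq3:bistab}.
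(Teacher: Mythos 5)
Your proposal is correct and follows essentially the same route as the paper: well-definedness from Assumption \ref{as:initial}, \ref{as:linstab} and \eqref{eq3:stab1b}, bijectivity via the discrete variation-of-constants formula, the upper bound from the resulting recursion for $\|Y(t_n)-Z(t_n)\|_{L_p(\Omega;H)}^2$ closed by a discrete Gronwall lemma with weakly singular kernel (the paper's Lemma \ref{lem:gronwall}), and the lower bound by rearranging the same identity and using the kernel bound \eqref{eq4:sumsing}. The only differences are cosmetic (e.g.\ the constant $3(C_S^2+1)$ versus $2(1+C_S)^2$).
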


\begin{proof}
  Let $k \in (0,T]$ be arbitrary. We first prove that $\mathcal{R}_k \colon
  \mathcal{G}_p(\mathcal{T}_k) \to \mathcal{G}_p(\mathcal{T}_k)$ is indeed
  well-defined. For all $n \in \{0,\ldots,N_k\}$ and $Z \in
  \mathcal{G}_p(\mathcal{T}_k)$ the random variable $\mathcal{R}_k[Z](t_n)$ is
  $\mathcal{F}_{t_n}$-measurable. In addition, by Assumptions
  \ref{as:initial} and \ref{as:linstab}
  and \eqref{eq3:stab1b} it follows that the $\mathcal{R}_k[Z](t_n)$ is
  also $p$-fold integrable for all $n \in \{0,\ldots,N_k\}$. Therefore, it
  holds $\mathcal{R}_k[Z] \in \mathcal{G}_p(\mathcal{T}_k)$. 

  Given $Y,Z \in \mathcal{G}_p(\mathcal{T}_k)$ with $\mathcal{R}_k[Y] =
  \mathcal{R}_k[Z]$, then it particularly holds $\mathcal{R}_k[Y](t_0) =
  \mathcal{R}_k[Z](t_0)$ from which we deduce $Y(t_0) = Z(t_0)$. Further, under
  the assumption that for some $n \in \{0,\ldots,N_k-1\}$, we have shown that
  $Y(t_j) = Z(t_j)$ for all $j \in \{0,\ldots,n\}$ then it follows by
  \eqref{eq3:opRN} 
  \begin{align*}
    0 = \mathcal{R}_k[Y]( t_{n+1}) - \mathcal{R}_k[Z](t_{n+1}) = Y(t_{n+1}) -
    Z(t_{n+1}). 
  \end{align*}
  Hence, $Y(t_{n+1}) = Z(t_{n+1})$ which proves that $\mathcal{R}_k$ is
  injective. 

  Further, for arbitrary $V \in \mathcal{G}_p(\mathcal{T}_k)$ the grid 
  function $Z \in \mathcal{G}_p(\mathcal{T}_k)$ defined by 
  \begin{align}
    \label{eq3:disc_var_const}
    \begin{split}
    Z(t_0) &:= V(t_0) + \xi,\\
    Z(t_n) &:= S_k^n Z(t_0) + \sum_{j = 1}^n S_k^{n-j} \big(
    \Phi(Z(t_{j-1}), t_{j-1}, k) + V(t_{j}) \big),
  \end{split}
  \end{align}
  for all $n \in \{1,\ldots,N_k\}$, satisfies $R^{N}[Z] = V$, as one directly
  verifies by an inductive argument. Consequently, $R^{N}$ is also surjective.
  In particular, for all $Z \in \mathcal{G}_p(\mathcal{T}_k)$ we equivalently
  rewrite the discrete variation of constants formula
  \eqref{eq3:disc_var_const} as 
  \begin{align}
    \label{eq3:disc_var_const2}
    \begin{split}
      Z(t_0) &= \mathcal{R}_k[Z](t_0) + \xi,\\
      Z(t_n) &= S_k^n Z(t_0) + \sum_{j = 1}^n S_k^{n-j} \big(
      \Phi(Z(t_{j-1}),t_{j-1}, k ) + \mathcal{R}_k[Z](t_j) \big)
    \end{split}
  \end{align}
  for all $n \in \{1,\ldots,N_k\}$. Thus, from Assumption \ref{as:linstab} and
  \eqref{eq3:stab2} we obtain 
  \begin{align*}
    \begin{split}
      &\|Y(t_n) - Z(t_n) \|_{L_p(\Omega;H)} \le \big\| S_k^n 
      ( Y(t_0) - Z(t_0) ) \big\|_{L_p(\Omega;H)} \\
      &\qquad + \Big\| \sum_{j = 1}^n S_k^{n-j} \big(
      \Phi(Y(t_{j-1}), t_{j-1}, k) ) - \Phi(Z(t_{j-1}), t_{j-1}, k)
      \big) \Big\|_{L_p(\Omega;H)}\\ 
      &\qquad + \Big\| \sum_{j = 1}^n S_k^{n-j} \big( \mathcal{R}_k[Y](t_j) 
      -  \mathcal{R}_k[Z](t_j) \big) \Big\|_{L_p(\Omega;H)}\\ 
      &\quad\le C_S \big\| Y(t_0) - Z(t_0) \big\|_{L_p(\Omega;H)} +
      \Big\| \sum_{j = 1}^n S_k^{n-j} \big( \mathcal{R}_k[Y](t_j) 
      -  \mathcal{R}_k[Z](t_j) \big) \Big\|_{L_p(\Omega;H)} \\ 
      &\qquad + C_\Phi \Big(
      k \sum_{j = 1}^{n} \big( t_n - t_{j-1} \big)^{-\frac{1}{2}}
      \big\| Y(t_{j-1}) - Z(t_{j-1}) \big\|_{L_p(\Omega;H)}^2 
      \Big)^{\frac{1}{2}} 
    \end{split}
  \end{align*}
  for all $n \in \{1,\ldots,N_k\}$ and all $Y, Z \in
  \mathcal{G}_p(\mathcal{T}_k)$. In addition, we have 
  \begin{align*}
    \|Y(t_0) - Z(t_0) \|_{L_p(\Omega;H)} 
    = \|\mathcal{R}_k[Y](t_0) -\mathcal{R}_k[Z](t_0) \|_{L_p(\Omega;H)}.  
  \end{align*}
  Now, from the definition of the 
  norm $\|\cdot\|_{-1,p}$ in \eqref{eq3:norm1} it directly follows that
  \begin{align*}
    &\|Y(t_n) - Z(t_n) \|_{L_p(\Omega;H)}^2 \le 2 (1 + C_S)^2 \big\|
    \mathcal{R}_k[Y] -  \mathcal{R}_k[Z] \big\|_{-1,p}^2 \\ 
    &\qquad \qquad \qquad + 2 C_\Phi^2 k
    \sum_{j = 1}^{n}\big( t_n - t_{j-1} \big)^{-\frac{1}{2}} \big\|
    Y(t_{j-1}) - Z(t_{j-1}) \big\|_{L_p(\Omega;H)}^2    
  \end{align*}
  and an application of the discrete Gronwall lemma (see Lemma
  \ref{lem:gronwall}) completes the proof of the right hand side inequality in
  \eqref{eq3:bistab}.

  Similarly, by rearranging \eqref{eq3:disc_var_const2} and an application of
  \eqref{eq3:stab2} we obtain 
  \begin{align*}
    &\Big\| \sum_{j = 1}^n S_k^{n-j} \big( \mathcal{R}_k[Y](t_j) 
    -  \mathcal{R}_k[Z](t_j) \big) \Big\|_{L_p(\Omega;H)}\\
    &\quad \le  \|Y(t_n) - Z(t_n) \|_{L_p(\Omega;H)} +
    \| S_k^n(Y(t_0) - Z(t_0)) \|_{L_p(\Omega;H)}\\
    &\qquad + \Big\| \sum_{j = 1}^n S_k^{n-j} \big(
      \Phi(Y(t_{j-1}), t_{j-1}, k) ) - \Phi(Z(t_{j-1}), t_{j-1}, k)
      \big) \Big\|_{L_p(\Omega;H)}\\
      &\quad \le 2 \|Y - Z \|_{0,p} + C_\Phi \Big(
      k \sum_{j = 1}^{n} \big( t_n - t_{j-1} \big)^{-\frac{1}{2}}
      \big\| Y(t_{j-1}) - Z(t_{j-1}) \big\|_{L_p(\Omega;H)}^2 
      \Big)^{\frac{1}{2}}\\
      &\quad \le \Big( 2 + C_\Phi \Big(k \sum_{j = 1}^{n} \big( t_n - t_{j-1}
      \big)^{-\frac{1}{2}} \Big)^{\frac{1}{2}} \Big) \|Y - Z \|_{0,p}
  \end{align*}
  for all $n \in \{1,\ldots,N_k\}$ and $Y, Z \in \mathcal{G}_p(\mathcal{T}_k)$.
  Since we have
  \begin{align}
    \label{eq4:sumsing}
    k \sum_{j = 1}^n \big(t_n - t_{j-1}\big)^{-\frac{1}{2}} \le
    \int_{0}^{t_n} \sigma^{-\frac{1}{2}} \diff{\sigma} \le
    2 t_n^{\frac{1}{2}} \le 2 T^{\frac{1}{2}},
  \end{align}
  we have also shown the validity of the inequality on the left-hand side of
  \eqref{eq3:bistab}.
\end{proof}

A proof of the following version of Gronwall's lemma is given in
\cite[Lemma~7.1]{elliott1992}.

\begin{lemma}[Discrete Gronwall lemma]
  Let $T > 0$, $k \in (0,T]$, $\eta \in (0,1]$ and a real-valued nonnegative 
  sequence $x_n$, $n \in \{0,\ldots,N_k\}$,
  be given. If there exist constants $C_1, C_2 \ge 0$ such that
  \begin{align*}
    x_n \le C_1 + C_2 k \sum_{j = 1}^{n} \big( t_n - t_{j-1} \big)^{-1+\eta}
    x_{j-1} \quad \text{ for all } n = 0, \ldots, N_k.
  \end{align*}
  Then, there exists a constant $C = C(C_2, T, \eta)$, independent of $k$,
  such that
  \begin{align*}
    x_n \le C C_1 \quad \text{ for all } n = 0, \ldots, N_k.
  \end{align*}  
  \label{lem:gronwall}
\end{lemma}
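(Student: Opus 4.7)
The plan is to use the classical iteration argument for weakly singular discrete Gronwall inequalities: substitute the hypothesis into its own right-hand side repeatedly, so that each substitution convolves the singular kernel $(t_n - t_{j-1})^{-1+\eta}$ with itself and raises its effective exponent by $\eta$. After finitely many iterations the kernel becomes bounded, at which point the standard non-singular discrete Gronwall lemma closes the argument.

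The key quantitative ingredient is a uniform-in-$k$ discrete Beta inequality: for every $\alpha, \beta \in (0,1]$ there exists $B_{\alpha,\beta} > 0$, depending only on $\alpha$ and $\beta$, such that
\[
k \sum_{l = j+1}^{n} (t_n - t_{l-1})^{-1+\alpha}\, (t_{l-1} - t_{j-1})^{-1+\beta} \le B_{\alpha,\beta}\, (t_n - t_{j-1})^{-1+\alpha+\beta}
\]
uniformly in $n, j, k$. I would establish this by extracting the prefactor $k^{\alpha+\beta-1}$, rewriting the sum as $\sum_{i=1}^{N-1} (N-i)^{-1+\alpha}\, i^{-1+\beta}$ with $N = n - j + 1$, and comparing it to the Euler Beta integral $\int_0^N (N-u)^{-1+\alpha}\, u^{-1+\beta}\, \diff{u} = B(\alpha, \beta)\, N^{\alpha+\beta-1}$. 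With this in hand, set $\kappa_1(n,j) := (t_n - t_{j-1})^{-1+\eta}$ and inductively $\kappa_{m+1}(n,j) := k \sum_{l = j+1}^{n} \kappa_1(n,l)\, \kappa_m(l-1, j)$; the Beta inequality then yields by induction $\kappa_m(n,j) \le D_m (t_n - t_{j-1})^{-1+m\eta}$, with constants $D_m$ depending only on $m$ and $\eta$.

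Iterating the hypothesis $m$ times now produces
\[
x_n \le C_1 \sum_{p=0}^{m-1} (C_2)^p\, \Sigma_p(n) + (C_2)^m\, k \sum_{j=1}^{n} \kappa_m(n,j)\, x_{j-1},
\]
where $\Sigma_0 \equiv 1$ and $\Sigma_p(n) := k \sum_{j=1}^n \kappa_p(n,j) \le C_p T^{p\eta}$ for $p \ge 1$, by comparison with $\int_0^T \sigma^{-1+p\eta}\, \diff{\sigma}$ (in the spirit of \eqref{eq4:sumsing}). Choosing $m \in \mathbb{N}$ with $m\eta > 1$ bounds $\kappa_m(n,j)$ uniformly by $D_m T^{m\eta - 1}$, so what remains is the non-singular inequality $x_n \le \hat{C}_1 C_1 + \hat{C}_2\, k \sum_{j=1}^{n} x_{j-1}$. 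A direct induction yields $x_n \le \hat{C}_1 C_1 (1 + \hat{C}_2 k)^n \le \hat{C}_1 C_1 \exp(\hat{C}_2 T)$, which is the claim. The principal technical obstacle is the Beta inequality itself, since both factors of the summand are singular at the endpoints of the summation interval; it is overcome by splitting off a bounded number of near-endpoint terms and comparing the bulk to the Beta integral using monotonicity of each factor on the half-interval where the other remains bounded.
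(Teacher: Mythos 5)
Your argument is correct. The paper itself does not prove this lemma --- it simply cites \cite[Lemma~7.1]{elliott1992} --- and the kernel-iteration argument you give (convolve the weakly singular kernel with itself until the exponent exceeds $-1+1=0$, then apply the non-singular discrete Gronwall lemma) is precisely the standard proof of that cited result, so you have in effect supplied the omitted proof rather than deviated from it. One small point worth making explicit: if you take $m$ \emph{minimal} with $m\eta > 1$, then every application of your discrete Beta inequality in the induction occurs with $\beta = p\eta \le 1$, so the restriction $\alpha,\beta \in (0,1]$ in your key estimate is never violated; with that observation the proof is complete as written.
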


Having this established we directly deduce the following norm estimate 
for the numerical scheme \eqref{eq3:scheme}.

\begin{corollary}
  \label{cor:stab}
  For $k \in (0,T]$ let $X_k \in \mathcal{G}_p(\mathcal{T}_k)$ be the grid
  function, which is generated by the numerical scheme \eqref{eq3:scheme}. Under
  Assumptions \ref{as:initial} to \ref{as:stab} with $p \in [2,\infty)$ it
  holds that  
  \begin{align*}
    \big\| X_k \big\|_{0,p} \le C_{\mathrm{Stab}} \big( \big\| \xi
    \big\|_{L_p(\Omega;H)} + C_\Phi T^{\frac{1}{2}} \big), 
  \end{align*}
  for all $k \in (0,T]$. 
\end{corollary}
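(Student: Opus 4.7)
The plan is to derive the estimate directly from bistability (Theorem~\ref{th:stab}) by comparing $X_k$ with the zero grid function. Since $\mathcal{R}_k[X_k] = 0$, applying the right-hand inequality of \eqref{eq3:bistab} with $Y = X_k$ and $Z \equiv 0 \in \mathcal{G}_p(\mathcal{T}_k)$ yields
\begin{align*}
  \| X_k \|_{0,p} = \| X_k - 0 \|_{0,p} \le C_{\mathrm{Stab}} \big\| \mathcal{R}_k[X_k] - \mathcal{R}_k[0] \big\|_{-1,p} = C_{\mathrm{Stab}} \big\| \mathcal{R}_k[0] \big\|_{-1,p}.
\end{align*}

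Next, I would compute $\mathcal{R}_k[0]$ explicitly from \eqref{eq3:opRN}: $\mathcal{R}_k[0](t_0) = -\xi$ and $\mathcal{R}_k[0](t_n) = -\Phi(0,t_{n-1},k)$ for $n \in \{1,\ldots,N_k\}$. Substituting this into the definition \eqref{eq3:norm1} of the Spijker norm gives
\begin{align*}
  \big\| \mathcal{R}_k[0] \big\|_{-1,p} = \big\| \xi \big\|_{L_p(\Omega;H)} + \max_{n \in \{1,\ldots,N_k\}} \Big\| \sum_{j=1}^n S_k^{n-j} \Phi(0,t_{j-1},k) \Big\|_{L_p(\Omega;H)}.
\end{align*}

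The key step is then to control the second term using the stability hypothesis \eqref{eq3:stab1} from Assumption~\ref{as:stab}, applied with $m = 1$, which gives
\begin{align*}
  \Big\| \sum_{j=1}^n S_k^{n-j} \Phi(0,t_{j-1},k) \Big\|_{L_p(\Omega;H)} \le C_\Phi (t_n - t_0)^{1/2} \le C_\Phi T^{1/2}
\end{align*}
uniformly in $n$ and $k$. Combining this with the previous displays yields the asserted bound. I do not expect any real obstacle: the argument is purely a combination of the bistability estimate and the linear growth bound \eqref{eq3:stab1} on $\Phi(0,\cdot,\cdot)$, with no need to invoke the Lipschitz condition \eqref{eq3:stab2} or Gronwall's lemma, since comparison with $Z \equiv 0$ trivializes the dependence on $X_k$ itself.
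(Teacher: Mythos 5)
Your proposal is correct and follows essentially the same route as the paper: compare $X_k$ with the zero grid function via the bistability estimate, compute $\mathcal{R}_k[0]$ explicitly, and bound the resulting Spijker norm using \eqref{eq3:stab1}. The sign in $\mathcal{R}_k[0](t_n) = -\Phi(0,t_{n-1},k)$ is immaterial under the norm, so no further work is needed.
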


\begin{proof}
  Under the given assumptions the numerical scheme \eqref{eq3:scheme} is stable.
  Since $\mathcal{R}_k[X_k] = 0 \in \mathcal{G}_p(\mathcal{T}_k)$ it holds
  \begin{align*}
    \big\| X_k \big\|_{0,p} = \big\| X_k - 0 \big\|_{0,p} &\le
    C_{\mathrm{Stab}} \big\| \mathcal{R}_k[X_k]-\mathcal{R}_k[0]\big\|_{-1,p}
    \\&= C_{\mathrm{Stab}} \big\| \mathcal{R}_k[0]\big\|_{-1,p}.
  \end{align*}
  Further, from \eqref{eq3:stab1} it follows that 
  \begin{align*}
    \big\| \mathcal{R}_k[0]\big\|_{-1,p} &= \big\| \xi
    \big\|_{L_p(\Omega;H)} + \max_{n \in \{1,\ldots,N_k\}} 
    \Big\| \sum_{j = 1}^n S_k^{n-j} \Phi(0, t_{j-1}, k) 
    \Big\|_{L_p(\Omega;H)}\\ 
    &\le \big\| \xi \big\|_{L_p(\Omega;H)} + C_\Phi T^{\frac{1}{2}},
  \end{align*}
  which completes the proof.
\end{proof}

\subsection{Consistency of the numerical scheme}
\label{sec:lvlcons}

In this section we derive a decomposition of
the local truncation error $\|\mathcal{R}_k [X|_{\mathcal{T}_k} ] \|_{-1,p}$,
which turns out to be useful in the proof of consistency of the Milstein
scheme. In Lemma \ref{lem:cons} it 
is shown that the local truncation error is dominated by a sum of five terms.

The first one is concerned with the distance between the initial conditions of
the SPDE \eqref{eq1:SPDE} and the numerical scheme \eqref{eq3:scheme}. The next
three summands are concerned with the error originating from replacing the
analytic semigroup $S(t)$, $t \in [0,T]$, by the family of bounded linear
operators $S_k$. Finally, the last term deals with the error caused by the
increment function $\Phi$.

\begin{lemma}
  \label{lem:cons}
  Let $X$ be the mild solution to \eqref{eq1:SPDE}. Then the local truncation
  error satisfies the estimate
  \begin{align*}
    &\big\|\mathcal{R}_k [X|_{\mathcal{T}_k} ] \big\|_{-1,p} \le \big\| X(t_0)
    - \xi \big\|_{L_p(\Omega;H)} + \max_{n \in \{1,\ldots,N_k\}} \big\| (S(t_n)
    - S_k^n) X_0 \big\|_{L_p(\Omega;H)} \\ 
    &\quad +\max_{n \in \{1,\ldots,N_k\}} \Big\| \sum_{j = 1}^n
    \int_{t_{j-1}}^{t_{j}} \big( 
    S(t_n - \sigma) - S_k^{n - j + 1} \big) f(X(\sigma)) 
    \diff{\sigma}\Big\|_{L_p(\Omega;H)} \\
    &\quad +\max_{n \in \{1,\ldots,N_k\}} \Big\| \sum_{j = 1}^n
    \int_{t_{j-1}}^{t_{j}} \big( S(t_n - 
    \sigma) - S_k^{n - j + 1} \big) g(X(\sigma)) 
    \diff{W(\sigma)}\Big\|_{L_p(\Omega;H)} \\
    &\quad +\max_{n \in \{1,\ldots,N_k\}} \Big\| \sum_{j = 1}^n S_k^{n-j} \Big(
    - \int_{t_{j-1}}^{t_{j}} S_k f(X(\sigma))
    \diff{\sigma} + \int_{t_{j-1}}^{t_{j}} S_k g(X(\sigma)) \diff{W(\sigma)} \\
    &\qquad \qquad \qquad \qquad   
    - \Phi(X(t_{j-1}),t_{j-1},k) \Big) 
    \Big\|_{L_p(\Omega;H)}    
  \end{align*}
  for all $k \in (0,T]$.
\end{lemma}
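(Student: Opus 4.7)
The plan is to combine the discrete variation-of-constants formula \eqref{eq3:disc_var_const2} (already established in the proof of Theorem \ref{th:stab}) with the mild solution representation \eqref{eq1:mild} of $X$, and to insert telescoping differences $S(t_n - \sigma) - S_k^{n-j+1}$ in order to separate the error arising from the discretization of the semigroup from the error caused by the increment function $\Phi$.

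First, I apply \eqref{eq3:disc_var_const2} with $Z = X|_{\mathcal{T}_k}$ and solve for the weighted sum of residuals, obtaining
\begin{align*}
\sum_{j=1}^{n} S_k^{n-j} \mathcal{R}_k[X|_{\mathcal{T}_k}](t_j)
= X(t_n) - S_k^n X(t_0) - \sum_{j=1}^{n} S_k^{n-j} \Phi(X(t_{j-1}), t_{j-1}, k).
\end{align*}
Next, since $X(t_0) = X_0$, I invoke \eqref{eq1:mild} at $t = t_n$ to rewrite
\begin{align*}
X(t_n) - S_k^n X_0 = (S(t_n) - S_k^n) X_0 - \sum_{j=1}^{n} \int_{t_{j-1}}^{t_j} S(t_n - \sigma) f(X(\sigma)) \diff{\sigma} + \sum_{j=1}^{n} \int_{t_{j-1}}^{t_j} S(t_n - \sigma) g(X(\sigma)) \diff{W(\sigma)}.
\end{align*}

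Then I add and subtract $S_k^{n-j+1} = S_k^{n-j} S_k$ inside each of the $j$-th integrals for both the drift and diffusion parts. This produces, for the drift,
\begin{align*}
-\int_{t_{j-1}}^{t_j} S(t_n-\sigma) f(X(\sigma))\diff{\sigma}
= -\int_{t_{j-1}}^{t_j} \big(S(t_n-\sigma) - S_k^{n-j+1}\big) f(X(\sigma))\diff{\sigma} - S_k^{n-j}\int_{t_{j-1}}^{t_j} S_k f(X(\sigma))\diff{\sigma},
\end{align*}
and an analogous identity for the stochastic integral. Collecting the contributions of $S_k^{n-j}$ that multiply both the corresponding $S_k$-integrals and the $\Phi$-terms yields exactly the fifth summand of the asserted bound. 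Combined with $\mathcal{R}_k[X|_{\mathcal{T}_k}](t_0) = X(t_0) - \xi$, the triangle inequality in $L_p(\Omega;H)$ and the definition \eqref{eq3:norm1} of $\|\cdot\|_{-1,p}$ (followed by taking the maximum over $n \in \{1,\ldots,N_k\}$) then give the five-term decomposition stated in the lemma.

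The argument is purely algebraic telescoping together with the triangle inequality, so there is no real technical obstacle; the only point that requires attention is keeping track of the shift in the exponent of $S_k$ so that $S_k^{n-j} S_k = S_k^{n-j+1}$ lines up correctly with $S(t_n - \sigma)$ over the subinterval $[t_{j-1}, t_j]$ (matching evaluations at $\sigma = t_{j-1}$).
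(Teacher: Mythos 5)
Your proof is correct, and it reaches the stated five-term decomposition by a genuinely shorter route than the paper. The paper starts from the definition of the Spijker norm, inserts the \emph{one-step} mild identity $X(t_j) = S(k)X(t_{j-1}) - \int_{t_{j-1}}^{t_j} S(t_j-\sigma)f(X(\sigma))\diff{\sigma} + \int_{t_{j-1}}^{t_j} S(t_j-\sigma)g(X(\sigma))\diff{W(\sigma)}$ into each residual, isolates the $\Phi$-term, and then must re-expand $X(t_{j-1})$ via the mild formula a second time; assembling the remaining pieces into the stated form requires the binomial identity $\sum_{j=1}^n S_k^{n-j}(S(k)-S_k)S(t_{j-1}) = S(t_n)-S_k^n$ together with a somewhat delicate interchange of summation and integration. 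You instead telescope first: the discrete variation-of-constants identity \eqref{eq3:disc_var_const2} (already established in the proof of Theorem \ref{th:stab}, or verified directly from $\sum_{j=1}^n S_k^{n-j}(X(t_j)-S_kX(t_{j-1})) = X(t_n)-S_k^nX(t_0)$) reduces everything to $X(t_n)-S_k^nX_0$ minus the accumulated increments, and then a single application of the \emph{global} mild formula \eqref{eq1:mild} at $t_n$ plus adding and subtracting $S_k^{n-j+1}=S_k^{n-j}S_k$ on each subinterval produces all five terms at once, as an exact identity before the triangle inequality is applied. This buys a cleaner argument that bypasses both the binomial identity and the sum--integral interchange (the only interchange you need is pulling the fixed bounded operator $S_k^{n-j+1}$ in and out of the stochastic integral, which is standard), and it makes transparent the remark following the lemma that the decomposition is an equality up to the triangle inequality; the paper's route, on the other hand, keeps the computation organized around the one-step structure of the scheme, which is the form in which the residual is defined. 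Your closing observation about the exponent bookkeeping is the right one: for $\sigma\in(t_{j-1},t_j]$ one has $t_n-\sigma\in[t_{n-j},t_{n-j+1})$, so $S_k^{n-j+1}$ is exactly the piecewise-constant approximation of $S(t_n-\sigma)$ used in \eqref{eq2:defSkh}.
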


\begin{proof}
  The stochastic Spijker norm of $\mathcal{R}_k [X|_{\mathcal{T}_k} ]$ is given
  by 
  \begin{align*}
    &\big\|\mathcal{R}_k [X|_{\mathcal{T}_k} ] \big\|_{-1,p} \\ 
    &\quad = \big\| \mathcal{R}_k [X|_{\mathcal{T}_k} ](t_0)
    \big\|_{L_p(\Omega;H)} + 
    \max_{n \in \{1,\ldots,N_k\}} \Big\| \sum_{j = 1}^n S_k^{n-j} 
    \mathcal{R}_k [X|_{\mathcal{T}_k} ](t_j) \Big\|_{L_p(\Omega;H)}\\
    &\quad = \big\| X(t_0) - \xi \big\|_{L_p(\Omega;H)}\\
    &\qquad +
    \max_{n \in \{1,\ldots,N_k\}} \Big\| \sum_{j = 1}^n S_k^{n-j}
    \big( X(t_j) - S_k X(t_{j-1}) - \Phi(X(t_{j-1}),t_{j-1},k) \big)
    \Big\|_{L_p(\Omega;H)}.
  \end{align*}
  First, we insert the following relationship into the second term
  \begin{align*}
    X(t_j) &= S(t_j - t_{j-1}) X(t_{j-1}) - \int_{t_{j-1}}^{t_{j}} S(t_j -
    \sigma) f(X(\sigma)) \diff{\sigma}\\
    &\qquad+ \int_{t_{j-1}}^{t_{j}} S(t_j - \sigma)
    g(X(\sigma)) \diff{W(\sigma)}, \quad \P\text{-a.s.},
  \end{align*}
  which follows from \eqref{eq1:mild}. Hence, for every $n \in \{1, \ldots,
  N_k\}$ we get
  \begin{align*}
    &\Big\| \sum_{j = 1}^n S_k^{n-j} 
    \big( X(t_j) - S_k X(t_{j-1}) - \Phi(X(t_{j-1}),t_{j-1},k) \big)
    \Big\|_{L_p(\Omega;H)}\\
    &\quad \le \Big\| \sum_{j = 1}^n 
    S_k^{n-j} \Big( \big( S(k) - S_k \big) X(t_{j-1})
    - \int_{t_{j-1}}^{t_{j}} \big( S(t_j - \sigma) - S_k \big) f(X(\sigma))
    \diff{\sigma}\\
    &\qquad \qquad + \int_{t_{j-1}}^{t_{j}} \big( S(t_j - \sigma) - S_k \big)
    g(X(\sigma)) \diff{W(\sigma)} \Big) \Big\|_{L_p(\Omega;H)}\\
    &\qquad + \Big\| \sum_{j = 1}^n S_k^{n-j} \Big(
    - \int_{t_{j-1}}^{t_{j}} S_k f(X(\sigma))
    \diff{\sigma} + \int_{t_{j-1}}^{t_{j}} S_k g(X(\sigma)) \diff{W(\sigma)} \\
    &\qquad \qquad \qquad    
    - \Phi(X(t_{j-1}),t_{j-1},k) \Big) 
    \Big\|_{L_p(\Omega;H)}.
  \end{align*}
  The last summand is already in the desired form. Therefore, it remains to
  estimate the first summand  
  \begin{align*}
    \Theta_n &:= \Big\| \sum_{j = 1}^n 
    S_k^{n-j} \Big( \big( S(k) - S_k \big) X(t_{j-1})
    - \int_{t_{j-1}}^{t_{j}} \big( S(t_j - \sigma) - S_k \big) f(X(\sigma))
    \diff{\sigma}\\
    &\qquad \qquad + \int_{t_{j-1}}^{t_{j}} \big( S(t_j - \sigma) - S_k \big)
    g(X(\sigma)) \diff{W(\sigma)} \Big) \Big\|_{L_p(\Omega;H)}.
  \end{align*}
  For this, we again insert \eqref{eq1:mild} and obtain
  \begin{align*}
     \Theta_n &\le \Big\| \sum_{j = 1}^n
     S_k^{n-j} \big( S(k) - S_k \big) S(t_{j-1}) X_0 \Big\|_{L_p(\Omega;H)} 
     \\ 
     &\quad + \Big\| \sum_{j = 1}^n S_k^{n-j} \Big( \big( S(k) - S_k \big)
     \int_{0}^{t_{j-1}} S(t_{j-1} - \sigma) f(X(\sigma)) \diff{\sigma} \\
     &\qquad \qquad + \int_{t_{j-1}}^{t_{j}} \big( S(t_j - \sigma) - S_k \big)
     f(X(\sigma)) \diff{\sigma} \Big) \Big\|_{L_p(\Omega;H)}\\
     &\quad + \Big\| \sum_{j = 1}^n S_k^{n-j} \Big( \big( S(k) - S_k \big)
     \int_{0}^{t_{j-1}} S(t_{j-1} - \sigma) g(X(\sigma)) \diff{W(\sigma)}\\
     &\qquad \qquad + \int_{t_{j-1}}^{t_{j}} \big( S(t_j - \sigma) - S_k \big) 
     g(X(\sigma)) \diff{W(\sigma)} \Big) \Big\|_{L_p(\Omega;H)} =:
     \Theta_n^1 + \Theta_n^2 + \Theta_n^3.
  \end{align*}
  Next, we apply the fact that
  \begin{align}
    \label{eq:binomial}
    \sum_{j = 1}^n S_k^{n-j} \big( S(k) - S_k \big) 
    S(t_{j-1}) = S(t_n) - S_k^n
  \end{align}
  for all $n \in \{1,\ldots, N_k\}$. This yields for the term $\Theta_n^1$ the
  estimate
  \begin{align}
    \label{eq3:Theta1}
    \Theta_n^1 =
    \Big\| \sum_{j = 1}^n S_k^{n-j} \big( S(k) - S_k \big) 
    S(t_{j-1}) X_0 \Big\|_{L_p(\Omega;H)}
    = \big\| (S(t_n) - S_k^n) X_0 \big\|_{L_p(\Omega;H)}
  \end{align}
  for all $n \in \{1,\ldots, N_k\}$. In addition, it holds
  \begin{align}
    \label{eq3:Theta2}
    \begin{split}
      \Theta_n^2 &= \Big\| \sum_{j = 1}^n \int_{t_{j-1}}^{t_{j}} \big(
      S(t_n - \sigma) - S_k^{n - j + 1} \big) f(X(\sigma)) 
      \diff{\sigma}\Big\|_{L_p(\Omega;H)},
    \end{split}
  \end{align}
  as well as
  \begin{align}
    \label{eq3:Theta3}
    \begin{split}
      \Theta_n^3 &= \Big\| \sum_{j = 1}^n \int_{t_{j-1}}^{t_{j}} \big( S(t_n -
      \sigma) - S_k^{n - j + 1} \big) g(X(\sigma)) 
      \diff{W(\sigma)}\Big\|_{L_p(\Omega;H)}
    \end{split}
  \end{align}
  for all $n \in \{1, \ldots, N_k\}$. Indeed, for a given $\sigma \in
  (0,t_{N_k}]$ let $\ell(\sigma) \in\N$ be determined by $t_{\ell(\sigma) - 1}
  < \sigma \le t_{\ell(\sigma)}$. Then, by interchanging summation and 
  integration we obtain
  \begin{align*}
    & \sum_{j = 1}^n S_k^{n-j} \big( S(k) - S_k \big)
    \int_{0}^{t_{j-1}} S(t_{j-1} - \sigma) f(X(\sigma)) \diff{\sigma}\\
    &\quad = \sum_{j = 1}^n \int_{0}^{t_{n-1}} \one_{[0,t_{j-1}]}(\sigma) 
    S_k^{n-j} \big( S(k) - S_k \big) S(t_{j-1} - \sigma) f(X(\sigma))
    \diff{\sigma}\\
    &\quad = \int_{0}^{t_{n-1}} \sum_{j = \ell(\sigma) +1}^n S_k^{n-j} \big(
    S(k) - S_k \big) S(t_{j-1} - t_{\ell(\sigma)}) S(t_{\ell(\sigma)} - \sigma)
    f(X(\sigma)) \diff{\sigma}\\
    &\quad = \int_{0}^{t_{n-1}} \big( S(t_{n} - t_{\ell(\sigma)}) -
    S_k^{n-\ell(\sigma)} \big) S(t_{\ell(\sigma)} - \sigma)
    f(X(\sigma)) \diff{\sigma}\\
    &\quad = \sum_{j = 1}^{n-1} \int_{t_{j-1}}^{t_{j}} 
    \big( S(t_{n} - t_{j}) -
    S_k^{n-j} \big) S(t_{j} - \sigma) f(X(\sigma)) \diff{\sigma},
  \end{align*}
  where we applied \eqref{eq:binomial} in the fourth step. Therefore, it holds
  \begin{align*}
    &\sum_{j = 1}^n S_k^{n-j} \Big( \big( S(k) - S_k \big)
    \int_{0}^{t_{j-1}} S(t_{j-1} - \sigma) f(X(\sigma)) \diff{\sigma} \\
    &\qquad \qquad + \int_{t_{j-1}}^{t_{j}} \big( S(t_j - \sigma) - S_k \big)
    f(X(\sigma)) \diff{\sigma} \Big)\\
    &\quad = \sum_{j = 1}^{n-1} \int_{t_{j-1}}^{t_{j}} 
    \big( S(t_{n} - t_{j}) - S_k^{n-j} \big) S(t_{j} - \sigma) f(X(\sigma))
    \diff{\sigma} \\
    &\qquad \qquad \qquad + \sum_{j = 1}^n S_k^{n-j} \int_{t_{j-1}}^{t_{j}}
    \big( S(t_j - \sigma) - S_k \big) f(X(\sigma)) \diff{\sigma}\\ 
    &\quad = \sum_{j = 1}^n \int_{t_{j-1}}^{t_{j}} \big( S(t_{n} - \sigma) -
    S_k^{n-j + 1} \big) f(X(\sigma)) \diff{\sigma}.
  \end{align*}
  This completes the proof of \eqref{eq3:Theta2} and the same arguments also
  yield \eqref{eq3:Theta3}.
\end{proof}

\begin{remark}
  In the finite dimensional situation with $H = \R^d$, $U = \R^m$, $d, m \in
  \N$ and $A = 0$ the SPDE \eqref{eq1:SPDE} becomes a 
  stochastic ordinary differential equation (SODE). In this situation we have
  $S(t) = \Id_H$ for all $t \in [0,T]$. If one applies a numerical scheme with
  $S_k = \Id_H$, then the error decomposition in Lemma \ref{lem:cons} actually
  holds with equality and it coincides with the stochastic Spijker norm from 
  \cite{beynkruse2010}. Compare also with \cite{kruse2011}, where the
  application of the maximum occurs inside the expectation.
\end{remark}

% include section 4
\section{Bistability of the Milstein-Galerkin finite element scheme} 
\label{sec:Mil}

In this section we embed the Milstein-Galerkin finite element scheme
\eqref{eq4:Milstein} into the abstract framework of Section
\ref{sec:numscheme}. Then, we prove that Assumptions \ref{as:initial} to
\ref{as:stab} are satisfied and we consequently conclude the bistability of the
scheme. 

For the embedding we first set $\xi_h = P_h X_0$ and
\begin{align*}
  S_{k,h} := \big( \Id_H + k A_h \big)^{-1} P_h \in \LB(H)
\end{align*}
for every $h \in (0,1]$. Let us note that in contrast to Section
\ref{subsec:Galerkin} the operator $S_{k,h}$ includes the orthogonal
projector $P_h$ and is therefore defined as an operator from $H$ to $H$.

Further, the increment function $\Phi_h \colon H
\times \mathbb{T} \times \Omega \to H$, $h \in (0,1]$, is given by
\begin{align}
  \label{eq4:Phih}
  \begin{split}
    \Phi_h(x,t,k) &= - k S_{k,h} f(x) + S_{k,h}g(x) \big(W(t+k) - W(t) \big)\\
    &\qquad + S_{k,h} \int_{t}^{t + k} g'(x)\Big[ \int_{t}^{\sigma_1} g(x)
    \diff{W(\sigma_2)} \Big] \diff{W(\sigma_1)} 
  \end{split}
\end{align}
for all $(t,k) \in \mathbb{T}$ and $x \in H$. 

\begin{theorem}
  \label{th:Milstab}
  Under Assumptions \ref{as4:initial} to \ref{as4:g} the Milstein-Galerkin
  finite element scheme  \eqref{eq4:Milstein} is bistable for every $h \in
  (0,1]$. The stability constant $C_{\mathrm{Stab}}$ can be chosen to be
  independent of $h \in (0,1]$. 
\end{theorem}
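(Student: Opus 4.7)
The plan is to deduce the theorem directly from Theorem~\ref{th:stab} by verifying, for the specific embedding $\xi=\xi_h=P_hX_0$, $S_k = S_{k,h}$, and $\Phi=\Phi_h$ defined in \eqref{eq4:Phih}, the three hypotheses: Assumption~\ref{as:initial} on the initial value, Assumption~\ref{as:linstab} on the linear operators, and Assumption~\ref{as:stab} on the increment function. All constants produced along the way have to be chosen independently of $h\in(0,1]$, so that the stability constant coming out of Theorem~\ref{th:stab} is as well.

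The first two checks are short. Assumption~\ref{as:initial} follows from Assumption~\ref{as4:initial} together with $\|P_h\|_{\LB(H)}\le 1$, so that $\xi_h\in L_p(\Omega,\F_0;H)$. For Assumption~\ref{as:linstab}, the key observation is that $S_{k,h}^n x = (\Id_H+kA_h)^{-n}P_h x$ for every $n\ge 1$, because $P_h$ is idempotent and $(\Id_H+kA_h)^{-1}$ maps $V_h$ into itself. Since $A_h$ is self-adjoint and positive definite on $V_h$, the spectral theorem gives $\|(\Id_H+kA_h)^{-n}\|_{\LB(V_h)}\le 1$, and combining this with $\|P_h\|_{\LB(H)}\le 1$ yields $\|S_{k,h}^n\|_{\LB(H)}\le 1$ uniformly in $h$, $k$, and $n$; in particular $C_S = 1$ is admissible.

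The bulk of the work is Assumption~\ref{as:stab}. Measurability of $\Phi_h(\cdot,t,k)$ with respect to $\B(H)\otimes\F_{t+k}/\B(H)$ is built into the definition \eqref{eq4:Phih}. I would split $\Phi_h$ into its drift contribution $-kS_{k,h}f$, its $g$-diffusion contribution, and its Milstein double-integral contribution, and estimate each in the stability sums separately. For \eqref{eq3:stab1} at $x=0$, the drift summand is treated by $\|kS_{k,h}\|_{\LB(H)}\le k$ together with $\|f(0)\|_{-1+r}\le C_f$, summing telescopically to something $\le C(t_n-t_{m-1})^{1/2}$. For the single and double stochastic integrals the summands over $j$ are orthogonal martingale increments, so Proposition~\ref{prop:stoch_int} applied to the interval $[t_{m-1},t_n]$ converts the sum into a time integral of $\|g(0)\|_{\LB_2^0}^2\le C_g^2$, respectively into an iterated time integral of $\|g'(0)g(0)\|_{\LB_2(U_0,\LB_2^0)}^2\le C_g^2$, yielding $\le C(t_n-t_{m-1})^{1/2}$ and $\le C(t_n-t_{m-1})$ respectively; both fit into \eqref{eq3:stab1}.

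Inequality \eqref{eq3:stab2} is the technical heart. I would again split by the three contributions of $\Phi_h(Y(t_{j-1}),t_{j-1},k)-\Phi_h(Z(t_{j-1}),t_{j-1},k)$ and bound each term separately in $L_p(\Omega;H)$. For the drift difference the Lipschitz bound \eqref{eq4:flip} on $f$ together with $\|kS_{k,h}\|_{\LB(H)}\le k$ produces a sum bounded by $k\sum_j\|Y(t_{j-1})-Z(t_{j-1})\|_{L_p}$, which, after a Cauchy--Schwarz step combined with the telescoping bound \eqref{eq4:sumsing}, rearranges into the required form with the singular kernel $(t_n-t_{j-1})^{-1/2}$. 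For the single stochastic sum the martingale property in $j$ lets me apply Proposition~\ref{prop:stoch_int} to the process $\sigma\mapsto S_{k,h}^{n-j(\sigma)}(g(Y(t_{j(\sigma)-1}))-g(Z(t_{j(\sigma)-1})))$ on $[0,t_n]$, producing an $L_2$-in-$\sigma$ integral of the $\LB_2^0$-Lipschitz difference from \eqref{eq4:glip}; this gives exactly the bound \eqref{eq3:stab2} (in fact with an even milder kernel than the singular one allowed). The double Milstein integral is the main obstacle: I would view its sum as a single iterated stochastic integral of the process $\sigma_1\mapsto S_{k,h}^{n-j(\sigma_1)}(g'(Y)g(Y)-g'(Z)g(Z))$ integrated against an inner Wiener increment, apply Proposition~\ref{prop:stoch_int} twice to reduce to the $L_2(U_0,\LB_2^0)$-norm of the difference, and then invoke the third Lipschitz bound in \eqref{eq4:glip}; this introduces an extra factor of $k^{1/2}$ per interval, so the resulting contribution is strictly dominated by the $g$-term and again conforms to \eqref{eq3:stab2}. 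With all three estimates in hand, the constants are indeed uniform in $h$ because only $\|S_{k,h}\|_{\LB(H)}\le 1$, $\|P_h\|_{\LB(H)}\le 1$, and the $h$-independent constants $C_f$, $C_g$ of Assumptions~\ref{as4:f} and~\ref{as4:g} are used, so Theorem~\ref{th:stab} delivers bistability with an $h$-independent $C_{\mathrm{Stab}}$.
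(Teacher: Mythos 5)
Your overall strategy coincides with the paper's: verify Assumptions \ref{as:initial}, \ref{as:linstab} and \ref{as:stab} for the embedding $\xi_h = P_h X_0$, $S_{k,h}$, $\Phi_h$, with all constants uniform in $h\in(0,1]$, and then invoke Theorem \ref{th:stab}. The initial-value and linear-stability checks are fine, and your treatment of the stochastic contributions (single and iterated integrals, rewritten as stochastic integrals over $[t_{m-1},t_n]$ resp.\ $[0,t_n]$ and estimated via Proposition \ref{prop:stoch_int}, the uniform bound \eqref{eq2:stabSkh}, and the three Lipschitz bounds in \eqref{eq4:glip}, including the third one for the Milstein double integral) matches what the paper does.

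The gap is in the drift terms. You estimate $-kS_{k,h}f(0)$ and $kS_{k,h}\bigl(f(Y(t_{j-1}))-f(Z(t_{j-1}))\bigr)$ using $\|kS_{k,h}\|_{\LB(H)}\le k$ together with $\|f(0)\|_{-1+r}\le C_f$ and \eqref{eq4:flip}. But Assumption \ref{as4:f} only gives $f\colon H\to\dot H^{-1+r}$ with $r<1$, so $f(0)$ and the differences $f(x_1)-f(x_2)$ are controlled only in the negative-order norm $\|\cdot\|_{-1+r}$ and need not lie in $H$ at all; an operator norm in $\LB(H)$ cannot be applied to them. The correct route --- and the very reason the singular kernel $(t_n-t_{j-1})^{-1/2}$ appears on the right-hand side of \eqref{eq3:stab2} --- is to write $S_{k,h}^{n-j+1}=A_h^{1/2}S_{k,h}^{n-j+1}A_h^{-1/2}P_h$ and combine the discrete negative-norm estimate \eqref{eq2:discnorm} with the discrete smoothing property \eqref{eq2:discsmoothing} (equivalently \eqref{eq2:smoothSkh}), which yields $\|S_{k,h}^{n-j+1}P_h y\|\le C\,t_{n-j+1}^{-1/2}\|y\|_{-1}$. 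The drift sums then become $k\sum_j(t_n-t_{j-1})^{-1/2}\|\cdot\|_{-1}$: the term at $x=0$ is summed via \eqref{eq4:sumsing} to give $C(t_n-t_{m-1})^{1/2}$ as required by \eqref{eq3:stab1}, and the Lipschitz difference is brought into the form \eqref{eq3:stab2} by a Cauchy--Schwarz step splitting $(t_n-t_{j-1})^{-1/2}=(t_n-t_{j-1})^{-1/4}(t_n-t_{j-1})^{-1/4}$. Without this smoothing argument the drift part of Assumption \ref{as:stab} is not established, so the reduction to Theorem \ref{th:stab} is incomplete.
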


\begin{proof}
  First, let $h \in (0,1]$ be an arbitrary but fixed parameter value of the
  spatial discretization. By Theorem \ref{th:stab} it is sufficient to show
  that Assumptions \ref{as:initial} to \ref{as:stab} are satisfied.
  
  Regarding Assumption \ref{as:initial} it directly follows from Assumption
  \ref{as4:initial} that $\xi_h = P_h X_0$ is $p$-fold integrable and
  $\F_0/\mathcal{B}(H)$-measurable. Furthermore, it holds
  \begin{align}
    \label{eq4:stabini}
    \big\| \xi_h \big\|_{L_p(\Omega;H)} \le \big\| X_0 \big\|_{L_p(\Omega;H)},
  \end{align}
  that is, the norm of $\xi_h$ is bounded independently of $h \in (0,1]$ by the
  norm of $X_0$.

  The stability of the family of linear operators $S_{k,h}$, $k \in (0,T]$,
  follows from \eqref{eq2:discsmoothing} with $\rho = 0$ which yields
  \begin{align*}
    \big\| S_{k,h}^n x \big\| = \big\| \big( ( \Id_H + kA_h)^{-1} P_h \big)^n x
    \big\| = \big\| ( \Id_H + kA_h)^{-n} P_h x \big\| \le C \| x \|
  \end{align*}
  for all $x \in H$ and $n \in \{1,\ldots,N_k\}$. Consequently, Assumption
  \ref{as:linstab} 
  is satisfied with $C_S = C$ and the constant is also independent of $h \in
  (0,1]$ and $k \in (0,T]$.

  Hence, it remains to investigate if Assumption \ref{as:stab} is also
  fulfilled.
  First, for every $p \in [2,\infty)$ and for all $m,n \in
  \{1,\ldots,N_k\}$ with $n \ge m$ it holds
  \begin{align*}
    &\Big\| \sum_{j = m}^n S_{k,h}^{n-j} \Phi_h(0,t_{j-1},k)
    \Big\|_{L_p(\Omega;H)} \\ 
    &\quad \le \Big\| \sum_{j = m}^n S_{k,h}^{n-j+1} f(0) k
    \Big\|_{L_p(\Omega;H)} + \Big\| \sum_{j = m}^n S_{k,h}^{n-j+1} g(0)
    \big(W(t_{j}) - W(t_{j-1})\big)\Big\|_{L_p(\Omega;H)}\\
    &\qquad + \Big\| \sum_{j = m}^n S_{k,h}^{n-j+1}  \int_{t_{j-1}}^{t_j}
    g'(0)\Big[ \int_{t_{j-1}}^{\sigma_1} g(0) \diff{W(\sigma_2)} \Big]
    \diff{W(\sigma_1)} \Big\|_{L_p(\Omega;H)}\\
    &\quad=: I_1 + I_2 + I_3.
  \end{align*}
  We deal with the three terms separately. By recalling
  \eqref{eq2:defSkh} and \eqref{eq2:smoothSkh} the deterministic
  term $I_1$ is estimated by
  \begin{align}
    \label{eq4:I1}
    \begin{split}
      I_1 = \Big\| \int_{t_{m-1}}^{t_n} S_{k,h}(t_n - \sigma) P_h f(0)
      \diff{\sigma}  \Big\|
      &\le \int_{t_{m-1}}^{t_n} (t_n- \sigma)^{-\frac{1}{2}} \| f(0)
      \|_{-1} \diff{\sigma}\\
      &= 2 (t_n- t_{m-1})^{\frac{1}{2}} \| f(0)\|_{-1}.
    \end{split}
  \end{align}
  For the estimate of $I_2$ we first write the sum as a stochastic integral by
  inserting \eqref{eq2:defSkh}, then we apply Proposition \ref{prop:stoch_int}
  and \eqref{eq2:stabSkh} and obtain
  \begin{align}
    \label{eq4:I2}
    \begin{split}
      I_2 &= \Big\| \int_{t_{m-1}}^{t_n} S_{k,h}(t_n - \sigma) P_h g(0)
      \diff{W(\sigma)}  \Big\|_{L_p(\Omega;H)}\\
      &\le C(p) \Big( \int_{t_{m-1}}^{t_n} \big\| S_{k,h}(t_n -
      \sigma) P_h g(0) \big\|_{\LB_2^0}^2 \diff{\sigma}
      \Big)^{\frac{1}{2}}\\ 
      &\le C \Big( \int_{t_{m-1}}^{t_n} \| g(0) \|_{\LB_2^0}^2 \diff{\sigma}
      \Big)^{\frac{1}{2}}
      = C (t_n- t_{m-1})^{\frac{1}{2}} \| g(0)\|_{\LB_2^0},
    \end{split}
  \end{align}
  where the constant $C$ is again independent of $h \in (0,1]$ and $k \in
  (0,T]$.  

  Before we continue with the estimate of the third term $I_3$, it is
  convenient to introduce the stochastic process $\Gamma_Y \colon [0,T] \times
  \Omega \to H$, which for a given $Y \in \mathcal{G}_p(\mathcal{T}_k)$ is
  defined by
  \begin{align}
    \label{eq4:Gamma}
    \Gamma_Y(\sigma) &:=
    \begin{cases}
      0 \in H, & \text{for } \sigma = 0,\\
      \int^{\sigma}_{t_{j-1}} g(Y(t_{j-1})) \diff{W(\tau)}, & \text{for }
      \sigma \in (t_{j-1}, t_j], \; j \in \{1,\ldots,N_k\}.     
    \end{cases}
  \end{align}
  Note that $\Gamma_Y$ is left-continuous with existing right limits and
  therefore predictable. Further, it holds by Proposition \ref{prop:stoch_int}
  \begin{align*}
    \sup_{\sigma \in [0,T]} \big\| \Gamma_Y(\sigma) \big\|_{L_p(\Omega;\LB_2^0)}
    \le C(p) k^{\frac{1}{2}} \max_{j \in \{1,\ldots,N_k\}} \|
    g(Y(t_{j-1}))\|_{L_p(\Omega;\LB_2^0)} 
  \end{align*}
  for all $p \in [2,\infty)$. 
  
  Together with the same arguments as above and Assumption \ref{as4:g}, this
  yields for $I_3$ that  
  \begin{align}
    \label{eq4:I3}
    \begin{split}
      I_3 &= \Big\| \int_{t_{m-1}}^{t_n} S_{k,h}(t_n - \sigma) P_h g'(0)\big[
      \Gamma_0(\sigma) \big] \diff{W(\sigma)}  \Big\|_{L_p(\Omega;H)}\\
      &\le C(p) \Big( \int_{t_{m-1}}^{t_n} \big\| S_{k,h}(t_n -
      \sigma) P_h g'(0)\big[
      \Gamma_0(\sigma) \big] \big\|_{L_p(\Omega;\LB_2^0)}^2 \diff{\sigma}
      \Big)^{\frac{1}{2}}\\  
      &\le C \Big( \int_{t_{m-1}}^{t_n} \big\|
      g'(0)\big[\Gamma_0(\sigma) \big] 
      \big\|_{L_p(\Omega;\LB_2^0)}^2 \diff{\sigma} \Big)^{\frac{1}{2}}\\ 
      &\le C\, (t_n- t_{m-1})^{\frac{1}{2}} k^{\frac{1}{2}} \| g'(0)
      \|_{\LB(H;\LB_2^0)} \| g(0)\|_{\LB_2^0} \le C C_g^2 T^{\frac{1}{2}} (t_n-
      t_{m-1})^{\frac{1}{2}}. 
    \end{split}
  \end{align}
  Hence, a combination of \eqref{eq4:I1}, \eqref{eq4:I2} and \eqref{eq4:I3}
  completes the proof of \eqref{eq3:stab1}.

  Next, we verify that $\Phi_h$ also satisfies \eqref{eq3:stab2}.
  For every $p \in [2,\infty)$, for all $Y, Z \in \mathcal{G}_p(\mathcal{T}_k)$
  and $n \in \{1,\ldots,N_k\}$ it holds 
  \begin{align*}
    &\Big\| \sum_{j = 1}^n S_{k,h}^{n-j} \big( \Phi_h(Y(t_{j-1}),t_{j-1},k)
    - \Phi_h(Z(t_{j-1}),t_{j-1},k) \big)
    \Big\|_{L_p(\Omega;H)} \\ 
    &\quad \le \Big\| \sum_{j = 1}^n S_{k,h}^{n-j+1} \big( f(Y(t_{j-1})) -
    f(Z(t_{j-1})) \big) k \Big\|_{L_p(\Omega;H)} \\
    &\quad \; + \Big\| \sum_{j = 1}^n S_{k,h}^{n-j+1} \big( g(Y(t_{j-1})) -
    g(Z(t_{j-1})) \big) \big(W(t_{j}) - W(t_{j-1})\big)\Big\|_{L_p(\Omega;H)}\\
    &\quad \; + \Big\| \sum_{j = 1}^n S_{k,h}^{n-j+1}  \int_{t_{j-1}}^{t_j} 
    g'(Y(t_{j-1}))\big[ \Gamma_Y(\sigma) \big]
    - g'(Z(t_{j-1}))\big[ \Gamma_Z(\sigma) \big]  \diff{W(\sigma)}
    \Big\|_{L_p(\Omega;H)}\\
    &\quad =: I_4 + I_5 + I_6.
  \end{align*}
  Again, we bound the three terms separately. For $I_4$ we apply
  \eqref{eq2:discnorm} and \eqref{eq2:discsmoothing} and obtain
  \begin{align*}
    I_4 &\le k \sum_{j = 1}^n \big\|A_h^{\frac{1}{2}} S_{k,h}^{n-j+1} 
    A_h^{-\frac{1}{2}} P_h \big( f(Y(t_{j-1})) - f(Z(t_{j-1})) \big)
    \big\|_{L_p(\Omega;H)}\\  
    &\le k \sum_{j = 1}^n \big(t_n - t_{j-1}\big)^{-\frac{1}{2}}
    \big\| f(Y(t_{j-1})) - f(Z(t_{j-1})) \big\|_{L_p(\Omega;\dot{H}^{-1})}.
  \end{align*}
  Therefore, by an application of Assumption \ref{as4:f} and the Cauchy-Schwarz
  inequality we get 
  \begin{align*}
    I_4^2 &\le C_f^2 k^2 \Big(\sum_{j = 1}^n \big(t_n -
    t_{j-1}\big)^{-\frac{1}{4}} \big(t_n -
    t_{j-1}\big)^{-\frac{1}{4}} \big\| Y(t_{j-1}) - Z(t_{j-1})
    \big\|_{L_p(\Omega;H)} \Big)^2\\
    &\le  C_f^2 k^2 \sum_{j = 1}^n \big(t_n -     t_{j-1}\big)^{-\frac{1}{2}}
    \sum_{j = 1}^n \big(t_n - t_{j-1}\big)^{-\frac{1}{2}} \big\| Y(t_{j-1}) -
    Z(t_{j-1}) \big\|_{L_p(\Omega;H)}^2.
  \end{align*}
  After applying \eqref{eq4:sumsing} the estimate of $I_4^2$ is in the desired
  form of \eqref{eq3:stab2}, that is 
  \begin{align}
    \label{eq4:J1}
    I_4^2 \le 2 C_f^2 T^{\frac{1}{2}} k \sum_{j = 1}^n \big(t_n -
    t_{j-1}\big)^{-\frac{1}{2}} \big\| Y(t_{j-1}) - Z(t_{j-1})
    \big\|_{L_p(\Omega;H)}^2.    
  \end{align}

  In order to apply Proposition \ref{prop:stoch_int} for the remaining two
  terms $I_5$ and $I_6$, we again write the sum as an integral in each term.
  For this we define
  \begin{align*}
    g_Y(\sigma) &:= \one_{(t_{j-1},t_j]}(\sigma) g(Y(t_{j-1})), \\
    g'_Y(\sigma) &:= \one_{(t_{j-1},t_j]}(\sigma) g'(Y(t_{j-1}))\big[
    \Gamma_Y(\sigma) \big]
  \end{align*}
  for all $Y \in \mathcal{G}_p(\mathcal{T}_k)$ and $\sigma \in [0,T]$. Then,
  $I_5$ is estimated by applying Proposition \ref{prop:stoch_int}
  and \eqref{eq2:stabSkh}. Thus we have
  \begin{align*}
      I_5 &= \Big\| \int_{0}^{t_n} S_{k,h}(t_n - \sigma) P_h \big(
      g_Y(\sigma) - g_Z(\sigma) \big) \diff{W(\sigma)}  \Big\|_{L_p(\Omega;H)}\\
      &\le C(p) \Big( \int_{0}^{t_n} \big\| S_{k,h}(t_n - \sigma) P_h \big(
      g_Y(\sigma) - g_Z(\sigma)\big) \big\|_{L_p(\Omega;\LB_2^0)}^2
      \diff{\sigma} \Big)^{\frac{1}{2}} \\ 
      &\le C \Big( \int_{0}^{t_n} \big\| 
      g_Y(\sigma) - g_Z(\sigma) \big\|_{L_p(\Omega;\LB_2^0)}^2 \diff{\sigma}
      \Big)^{\frac{1}{2}}\\
      &= C \Big( k \sum_{j = 1}^n \big\| g(Y(t_{j-1})) - g(Z(t_{j-1})) 
      \big\|_{L_p(\Omega;\LB_2^0)}^2 \Big)^{\frac{1}{2}}. 
  \end{align*}
  Then Assumption \ref{as4:g} yields
  \begin{align}
    \label{eq4:J2}
    \begin{split}
      I_5^2 &\le C^2 C_g k \sum_{j = 1}^n \big\| Y(t_{j-1}) - Z(t_{j-1})
      \big\|_{L_p(\Omega;H)}^2 \\ 
      &\le C^2 T^{\frac{1}{2}} C_g k \sum_{j = 1}^n \big(t_n -
      t_{j-1}\big)^{-\frac{1}{2}} \big\| Y(t_{j-1}) - Z(t_{j-1})
      \big\|_{L_p(\Omega;H)}^2.  
    \end{split}
  \end{align}
  It remains to prove a similar estimate for $I_6$. As above, Proposition
  \ref{prop:stoch_int} and \eqref{eq2:stabSkh} yield
  \begin{align*}
    I_6 &= \Big\| \int_{0}^{t_n} S_{k,h}(t_n - \sigma) P_h \big(
      g_Y'(\sigma) - g_Z'(\sigma) \big) \diff{W(\sigma)}
      \Big\|_{L_p(\Omega;H)}\\ 
      &\le C \Big( \sum_{j = 1}^n \int_{t_{j-1}}^{t_j} \big\|
      g'(Y(t_{j-1}))[\Gamma_Y(\sigma)] - g'(Z(t_{j-1}))[\Gamma_Z(\sigma)] 
      \big\|_{L_p(\Omega;\LB_2^0)}^2 \diff{\sigma} \Big)^{\frac{1}{2}}.  
  \end{align*}
  Next, since
  \begin{align*}
    g'(Y(t_{j-1}))[\Gamma_Y(\sigma)] = \int_{t_{j-1}}^{\sigma}
    g'(Y(t_{j-1})) g(Y(t_{j-1})) \diff{W(\sigma_2)} \in L_p(\Omega;\LB_2^0), 
  \end{align*}
  we obtain by a further application of Proposition \ref{prop:stoch_int} and
  \eqref{eq4:glip}
  \begin{align*}
    I_6^2 &\le C k^2 \sum_{j = 1}^n \big\| g'(Y(t_{j-1}))g(Y(t_{j-1}))
    - g'(Z(t_{j-1}))g(Z(t_{j-1})) \big\|_{L_p(\Omega;\LB_2(U_0,\LB_2^0))}^2\\ 
    &\le C T^{\frac{3}{2}} C_g k \sum_{j = 1}^n \big(t_n -
      t_{j-1}\big)^{-\frac{1}{2}} \big\| Y(t_{j-1}) - Z(t_{j-1})
      \big\|_{L_p(\Omega;H)}^2. 
  \end{align*}
  Hence, together with \eqref{eq4:J1} and \eqref{eq4:J2} the proof of 
  \eqref{eq3:stab2} is complete, where the constant $C_\Phi$ can also be chosen
  to be independent of $h \in (0,1]$.
  
  Concerning the measurability of $\Phi_h$ it is clear, that for every $(x,t,k)
  \in H \times \mathbb{T}$ we have $\Phi_h(x,t,k) \in
  L_p(\Omega,\F_{t+k},\P;H)$. In addition, the same arguments, which have been
  used for the analysis of the terms $I_4$ to $I_6$,
  yield the continuity of $x \mapsto \Phi_h(x,t,k)$ as a mapping
  from $H$ to $L_p(\Omega;H)$. From this we directly deduce the measurability
  of the mapping $(x, \omega) \mapsto \Phi_h(x,t,k)(\omega)$ with respect to
  $\mathcal{B}(H) \otimes \F_{t + k}/ \mathcal{B}(H)$. 

  Finally, after a short inspection of the proof of Theorem \ref{th:stab} 
  we note the following: Since all constants can be chosen to be
  independent of $h \in (0,1]$, there also exists
  a choice of the stability constant $\mathrm{C}_{\mathrm{Stab}}$ for the
  Milstein-Galerkin finite element scheme \eqref{eq4:Milstein}, which is
  likewise independent of the parameter $h \in (0,1]$. 
\end{proof}

\section{Consistency of the Milstein scheme}
\label{subsec:Milcons}
The aim of this section is to investigate, if the Milstein scheme is
consistent. Our result is summarized in the following theorem. Its
proof is based on the decomposition of the local truncation error given in
Lemma \ref{lem:cons} and is split over a series of lemmas.

\begin{theorem}
  \label{th4:cons}
  Let Assumptions \ref{as:Vh} and \ref{as:Vh2} be satisfied by
  the spatial discretization.  
  If Assumptions \ref{as4:initial} to \ref{as4:g} are fulfilled, then the local
  truncation error of the scheme \eqref{eq4:Milstein} satisfies
  \begin{align*}
    \big\| \mathcal{R}_k \big[ X|_{\mathcal{T}_k} \big] \big\|_{-1,p} \le C
    \big( h^{1+r} + k^{\frac{1+r}{2}} \big)
  \end{align*}
  for all $h \in [0,1)$ and $k \in (0,T]$. In particular, if $h$ and $k$ are
  coupled by $h := c k^{\frac{1}{2}}$ for a positive
  constant $c \in \R$, then the Milstein scheme is consistent of order
  $\frac{1+r}{2}$.
\end{theorem}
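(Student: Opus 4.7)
The plan is to apply Lemma~\ref{lem:cons}, which bounds $\|\mathcal{R}_k[X|_{\mathcal{T}_k}]\|_{-1,p}$ by a sum of five terms, and to show separately that each term is at most $C(h^{1+r} + k^{(1+r)/2})$. The initial-value contribution $\|X_0 - P_h X_0\|_{L_p(\Omega;H)}$ follows from Assumption~\ref{as:Vh} via the standard finite element estimate $\|(\Id - P_h)x\| \le \|(\Id - R_h)x\| \le Ch^{1+r}\|x\|_{1+r}$ (by interpolation between the $s \in \{1,2\}$ cases) combined with Assumption~\ref{as4:initial}. The semigroup term on the initial data, $\max_n\|(S(t_n) - S_{k,h}^n)X_0\|_{L_p(\Omega;H)}$, is handled directly by Lemma~\ref{lem:Fkh1}(i) with $\mu = \nu = 1+r$, which kills the singular factor in $t_n$.

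The next two summands, involving $\int_{t_{j-1}}^{t_j} F_{k,h}(t_n-\sigma)f(X(\sigma))\,d\sigma$ and the analogous stochastic integral, are handled by exploiting the integral estimates of Lemma~\ref{lem:Fkh2}. Specifically, Lemma~\ref{lem:Fkh2}(i) with $\rho = 1-r$ produces the desired rate for the deterministic integrand, using $f(X) \in L_p(\Omega;\dot H^{-1+r})$ from Assumption~\ref{as4:f}, while Lemma~\ref{lem:Fkh2}(ii) with $\rho = r$, together with Proposition~\ref{prop:stoch_int}, yields the rate for the stochastic integrand, using $g(X) \in L_p(\Omega;\LB_{2,r}^0)$ from Assumption~\ref{as4:g} and the regularity bound \eqref{eq2:reg}. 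The time-dependence is dealt with by freezing the integrand at $f(X(t_{j-1}))$ or $g(X(t_{j-1}))$ on each subinterval; the resulting Lipschitz-type remainders are controlled by the pointwise bounds of Lemma~\ref{lem:Fkh1} together with the H\"older regularity \eqref{eq2:hoelder} of $X$.

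The fifth summand encodes the Milstein one-step error; inserting the formula \eqref{eq4:Phih} for $\Phi_h$ reduces it to the Spijker-norm estimate of
\begin{align*}
  \sum_{j=1}^n S_{k,h}^{n-j+1}\Big[&-\!\int_{t_{j-1}}^{t_j}\bigl(f(X(\sigma)) - f(X(t_{j-1}))\bigr) d\sigma\\
  &+ \int_{t_{j-1}}^{t_j}\bigl(g(X(\sigma)) - g(X(t_{j-1})) - g'(X(t_{j-1}))\Gamma_X(\sigma)\bigr) dW(\sigma)\Big].
\end{align*}
For the stochastic part I Taylor-expand $g$ around $X(t_{j-1})$, substitute the mild form \eqref{eq1:mild} for $X(\sigma) - X(t_{j-1})$, and observe that the Milstein correction $g'(X(t_{j-1}))\Gamma_X(\sigma)$ cancels precisely the leading-order stochastic piece. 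The remaining bracket then has $L_p(\Omega;H)$-norm of order $(\sigma - t_{j-1})^{(1+r)/2}$ by Assumptions~\ref{as4:initial}--\ref{as4:g}, the regularity bound \eqref{eq2:reg}, and Proposition~\ref{prop:stoch_int}; a further application of BDG to the outer stochastic sum produces the $k^{(1+r)/2}$ rate. The deterministic part is handled analogously by Taylor-expanding $f$ and using the stochastic Fubini identity $\int_{t_{j-1}}^{t_j}\!\int_{t_{j-1}}^{\sigma}S(\sigma-\tau)g(X(\tau))\,dW(\tau)\,d\sigma = \int_{t_{j-1}}^{t_j} A^{-1}(\Id - S(t_j - \tau))g(X(\tau))\,dW(\tau)$, which extracts an $A^{-1}$-smoothing factor and rewrites the iterated integral as a single martingale to which BDG yields the same order; the quadratic Taylor remainders are subdominant by virtue of \eqref{eq2:hoelder} and the Lipschitzness of $f'$, $g'$.

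The principal obstacle is this last term: one must verify the exact Milstein cancellation, then propagate the regularity index $r$ through the iterated It\^o-Taylor expansion and the stochastic Fubini argument while keeping every integrand predictable. The other four estimates follow fairly routinely from the tools assembled in Section~\ref{sec:prelim}.
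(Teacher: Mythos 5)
Your overall architecture is the paper's: you invoke Lemma~\ref{lem:cons} to split the residual into five terms and bound each by $C(h^{1+r}+k^{\frac{1+r}{2}})$. Your treatment of the initial value, of the term $(S(t_n)-S_{k,h}^n)X_0$, and of the fifth (Milstein) term essentially reproduces Lemmas~\ref{lem4:initial}, \ref{lem4:lin1}, \ref{lem4:nonlin1} and \ref{lem4:nonlin2}; for the deterministic half of the fifth term you replace the paper's conditional-expectation/Burkholder splitting by a stochastic Fubini identity turning the iterated integral into a single martingale increment carrying the smoothing factor $A^{-1}(\Id - S(t_j-\tau))$. That identity is correct, the summands retain zero conditional mean so Burkholder still applies to the outer sum, and the resulting bound (of order $k$ up to a harmless logarithm) suffices, so this is a legitimate variant.

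The gap is in the third and fourth summands. You freeze the integrands at $t_{j-1}$ on each subinterval, whereas the paper (Lemmas~\ref{lem4:lin2} and \ref{lem4:lin3}) freezes them at the \emph{terminal} time $t_n$ of the outer sum, and this choice is essential, not cosmetic. First, Lemma~\ref{lem:Fkh2} estimates $\int_0^t F_{k,h}(\sigma)x\diff{\sigma}$ and $\big(\int_0^t\|F_{k,h}(\sigma)x\|^2\diff{\sigma}\big)^{1/2}$ for a \emph{single fixed} $x$; it does not apply to $\int_0^{t_n}F_{k,h}(t_n-\sigma)\tilde{f}(\sigma)\diff{\sigma}$ with $\tilde{f}$ piecewise constant equal to $f(X(t_{j-1}))$ on $(t_{j-1},t_j]$, so your frozen part is not covered by the lemma you cite. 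Second, your remainder $f(X(\sigma))-f(X(t_{j-1}))$ gains only $(\sigma-t_{j-1})^{1/2}\le k^{1/2}$ from \eqref{eq2:hoelder}, which does not compensate the singularity of the pointwise bounds at $\sigma=t_n$: with Lemma~\ref{lem:Fkh1}~(iii) the integrand on the last subinterval behaves like $(t_n-\sigma)^{-1}k^{1/2}$ and the integral diverges, and likewise for the stochastic term, where after Proposition~\ref{prop:stoch_int} the squared singularity $(t_n-\sigma)^{-1-r}$ from Lemma~\ref{lem:Fkh1}~(i) needs precisely the factor $(t_n-\sigma)$ that only the remainder $g(X(\sigma))-g(X(t_n))$ supplies. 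Freezing at $t_n$ repairs both defects at once; with your splitting, even after treating the last subinterval separately you would at best recover the rate with a logarithmic loss, i.e.\ the $\epsilon$-reduced order that this paper is specifically designed to avoid.
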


\begin{lemma}[Consistency of the initial condition]
  \label{lem4:initial}
  Let Assumption \ref{as4:initial} be satisfied with $r \in [0,1]$. 
  Under Assumption \ref{as:Vh} it holds 
  \begin{align*}
    \| X(0) - \xi_h \|_{L_p(\Omega;H)} \le C h^{1+r}
  \end{align*}
  for $\xi_h = P_h X_0$ and for all $h \in (0,1]$. 
\end{lemma}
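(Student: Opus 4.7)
The plan is to reduce everything to a deterministic approximation estimate for $\mathrm{Id}_H - P_h$ on $\dot{H}^{1+r}$, applied pathwise to $X_0$, and then to take $L_p(\Omega)$ norms.

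Since $X(0) = X_0$ and $\xi_h = P_h X_0$, we have $X(0) - \xi_h = (\mathrm{Id}_H - P_h) X_0$ $\P$-a.s. It therefore suffices to establish the deterministic estimate
\begin{align*}
  \| (\mathrm{Id}_H - P_h) x \| \le C h^{1+r} \| x \|_{1+r}
  \quad \text{for all } x \in \dot{H}^{1+r}, \; h \in (0,1],
\end{align*}
and then apply it with $x = X_0(\omega)$. First, because $P_h$ is the orthogonal projector onto $V_h$ with respect to the inner product in $H$, it gives the best approximation of $x \in H$ by an element of $V_h$ with respect to $\|\cdot\|$. In particular, using the Ritz projector $R_h x \in V_h$ as a comparison element yields
\begin{align*}
  \| (\mathrm{Id}_H - P_h) x \| \le \| (\mathrm{Id}_H - R_h) x \| \le C h^s \| x \|_s
  \quad \text{for } s \in \{1, 2\},
\end{align*}
where the second inequality is Assumption \ref{as:Vh}.

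Next, since $(\dot{H}^s)_{s \ge 0}$ forms an interpolation scale (they are the domains of fractional powers of the self-adjoint positive operator $A$, and the inner products are $(\cdot,\cdot)_s = (A^{s/2}\cdot, A^{s/2}\cdot)$), the linear operator $\mathrm{Id}_H - P_h \colon \dot{H}^s \to H$ interpolates between the endpoints $s = 1$ and $s = 2$. Consequently, for every intermediate exponent $s \in [1,2]$, and in particular for $s = 1 + r$ with $r \in [0,1]$,
\begin{align*}
  \| (\mathrm{Id}_H - P_h) x \| \le C h^{1+r} \| x \|_{1+r}
  \quad \text{for all } x \in \dot{H}^{1+r}.
\end{align*}
Applying this pathwise to $X_0$ and taking $L_p(\Omega)$-norms gives
\begin{align*}
  \| X(0) - \xi_h \|_{L_p(\Omega;H)}
  \le C h^{1+r} \big\| \| X_0 \|_{1+r} \big\|_{L_p(\Omega;\R)}
  \le C h^{1+r},
\end{align*}
where the last estimate uses Assumption \ref{as4:initial}, which bounds the $2p$-th (hence the $p$-th) moment of $\|X_0\|_{1+r}$.

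The only nonroutine step is the interpolation passage from the integer exponents $s \in \{1,2\}$ provided by Assumption \ref{as:Vh} to the fractional exponent $s = 1 + r$; the cleanest way to justify it is to invoke the standard interpolation result for the scale of fractional power domains of a self-adjoint positive operator on a Hilbert space. Everything else is pointwise algebra and the best-approximation property of $P_h$ in the $H$-norm.
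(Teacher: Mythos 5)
Your proof is correct and follows essentially the same route as the paper: identify $X(0)-\xi_h=(\Id_H-P_h)X_0$, use the best-approximation property of $P_h$ in the $H$-norm with $R_h X_0$ as comparison element, invoke Assumption \ref{as:Vh}, and conclude via the moment bound in Assumption \ref{as4:initial}. The only difference is that you make explicit the interpolation from the integer exponents $s\in\{1,2\}$ to $s=1+r$, a step the paper's one-line proof leaves implicit; this is a legitimate and welcome clarification rather than a deviation.
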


\begin{proof}
  By the best approximation property of the orthogonal projector $P_h \colon H
  \to V_h$ and by Assumption \ref{as:Vh} it holds
  \begin{align*}
    \| X(0) - \xi_h \|_{L_p(\Omega;H)} &= \| (\Id_H - P_h) X_0
    \|_{L_p(\Omega;H)} \le \|  (\Id_H - R_h) X_0 \|_{L_p(\Omega;H)} \le C
    h^{1+r} 
  \end{align*}
  for all $h \in (0,1]$.
\end{proof}

The next three lemmas are concerned with the consistency of the family of
linear operators $S_{k,h}$, $k \in (0,T]$, $h \in (0,1]$.  

\begin{lemma}
  \label{lem4:lin1}
  Let Assumption \ref{as4:initial} be satisfied for some $r \in [0,1]$. If the
  spatial discretization satisfies Assumption \ref{as:Vh} it holds
  \begin{align*}
    \max_{n \in \{1,\ldots,N_k\}} \big\| \big( S(t_n) - S_{k,h}^n \big) X_0
    \big\|_{L_p(\Omega;H)} \le C \big( h^{1+r} + k^{\frac{1+r}{2}} \big) 
    \big\| A^{\frac{1+r}{2}} X_0 \big\|_{L_p(\Omega;H)}
  \end{align*}
  for all $h \in (0,1]$ and $k \in (0,T]$.
\end{lemma}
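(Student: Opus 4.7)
The plan is to reduce the claim to Lemma~\ref{lem:Fkh1}(i) after matching the two slightly different meanings of $S_{k,h}$ used in Sections~\ref{subsec:Galerkin} and~\ref{sec:Mil}.

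First I would observe that the operator $S_{k,h}$ introduced at the beginning of Section~\ref{sec:Mil} as $(\Id_H + kA_h)^{-1}P_h \in \LB(H)$ iterates cleanly: since $A_h$ and $P_h$ both map into $V_h$ and $P_h$ acts as the identity on $V_h$, we have
\begin{align*}
  S_{k,h}^n = \bigl((\Id_H + kA_h)^{-1} P_h\bigr)^n = (\Id_H + kA_h)^{-n} P_h,
\end{align*}
for every $n\in\{1,\ldots,N_k\}$. Comparing this with \eqref{eq2:defSkh} yields $S_{k,h}^n x = S_{k,h}(t_n) P_h x$ for all $x \in H$, whence by the definition \eqref{eq2:errOp} of the continuous-time error operator
\begin{align*}
  \bigl(S(t_n) - S_{k,h}^n\bigr) X_0 = -F_{k,h}(t_n) X_0 \qquad \P\text{-a.s.}
\end{align*}

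Next I would invoke Lemma~\ref{lem:Fkh1}(i) with the choice $\nu = \mu = 1+r$, which is admissible because $r \in [0,1]$ gives $0 \le 1+r \le 2$. This choice exactly cancels the singular time factor $t^{-(\mu-\nu)/2}$ and produces the pointwise, pathwise estimate
\begin{align*}
  \bigl\| F_{k,h}(t_n) X_0 \bigr\| \le C\bigl(h^{1+r} + k^{\frac{1+r}{2}}\bigr)\|X_0\|_{1+r} = C\bigl(h^{1+r} + k^{\frac{1+r}{2}}\bigr)\bigl\| A^{\frac{1+r}{2}} X_0\bigr\|,
\end{align*}
uniformly in $n \in \{1,\ldots,N_k\}$ and in $h,k \in (0,1]$. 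Here I use the identification $\|x\|_{1+r} = \|A^{(1+r)/2} x\|$ from Section~\ref{subsec:assumptions}.

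Finally I would take the $L_p(\Omega;H)$ norm (the estimate above is deterministic in the sense that $C$ does not depend on $\omega$) and then the maximum over $n$, which immediately yields the asserted bound. There is no genuine obstacle here; the only subtle point is the bookkeeping required to reconcile the $S_{k,h}$ of Section~\ref{subsec:Galerkin}, which did not include $P_h$, with the $S_{k,h}$ of Section~\ref{sec:Mil}, which does. Once this identification is made, the lemma follows directly from the non-smooth-data error estimate for the backward Euler-Galerkin approximation of \eqref{eq2:linprob}.
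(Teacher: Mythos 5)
Your proposal is correct and follows essentially the same route as the paper: the paper likewise identifies the left-hand side as the nonsmooth-data error $F_{k,h}(t_n)X_0$ of the fully discrete backward Euler--Galerkin scheme for the linear Cauchy problem and applies Lemma \ref{lem:Fkh1}(i) (equivalently Theorem 7.8 of Thom\'ee) with $\mu=\nu=1+r$, using that $X_0(\omega)\in\dot{H}^{1+r}$ almost surely. Your additional bookkeeping reconciling the two definitions of $S_{k,h}$ is the same observation made in the proof of Theorem \ref{th:Milstab} and is a welcome clarification, not a deviation.
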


\begin{proof}
  The term on the left hand side of the inequality is the error of the fully
  discrete approximation scheme for the linear Cauchy problem $u_t = Au$ with
  the initial condition being a random variable.
  By Assumption \ref{as4:initial} we have that $X_0(\omega) \in \dot{H}^{1+r}$
  for $\P$-almost all $\omega \in \Omega$. Thus, the error estimate from Lemma
  \ref{lem:Fkh1} \emph{(i)} (or \cite[Theorem 7.8]{thomee2006}) yields 
  \begin{align*}
    \big\| \big( S(t_n) - S_{k,h}^n \big) X_0
    \big\|_{L_p(\Omega;H)} \le 
    C \big( h^{1+r} + k^{\frac{1+r}{2}} \big) \big\| A^{\frac{1+r}{2}} X_0
    \big\|_{L_p(\Omega;H)},
  \end{align*}
  for all $h \in (0,1]$, $k \in (0,T]$, where the constant $C$ is also
  independent of $n \in \{1,\ldots,N_k\}$. 
\end{proof}

\begin{lemma}
  \label{lem4:lin2}
  Let Assumptions \ref{as4:initial} to \ref{as4:g} be satisfied for some $r \in
  [0,1]$. If the spatial discretization satisfies Assumption \ref{as:Vh} it
  holds 
  \begin{align*}
    &\max_{n \in \{1,\ldots,N_k\}} \Big\| \sum_{j = 1}^n
    \int_{t_{j-1}}^{t_j} \big( S(t_n - \sigma) - S_{k,h}^{n-j+1} \big)
    f(X(\sigma)) \diff{\sigma} \Big\|_{L_p(\Omega;H)} 
    \le C \big( h^{1+r} +
    k^{\frac{1+r}{2}} \big)  
  \end{align*}
  for all $h \in (0,1]$ and $k \in (0,T]$.
\end{lemma}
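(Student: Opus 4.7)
The plan is to rewrite the sum inside the norm as a single deterministic integral against the error operator $F_{k,h}$ from \eqref{eq2:errOp}. For every $j\in\{1,\ldots,n\}$ and $\sigma\in(t_{j-1},t_j)$ we have $t_n-\sigma\in(t_{n-j},t_{n-j+1})$, and since $S_{k,h}^{n-j+1}=(\Id_H+kA_h)^{-(n-j+1)}P_h = S_{k,h}(t_n-\sigma)P_h$ on that interval, the sum coincides up to a sign with $-\int_0^{t_n}F_{k,h}(t_n-\sigma)f(X(\sigma))\diff{\sigma}$. After the change of variables $\tau=t_n-\sigma$ the task is to estimate
\begin{align*}
  \Big\|\int_0^{t_n}F_{k,h}(\tau)f(X(t_n-\tau))\diff{\tau}\Big\|_{L_p(\Omega;H)}.
\end{align*}

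The next step is to freeze the integrand at $\tau=0$, writing
\begin{align*}
  \int_0^{t_n}F_{k,h}(\tau)f(X(t_n-\tau))\diff{\tau}
  &=\Big(\int_0^{t_n}F_{k,h}(\tau)\diff{\tau}\Big)f(X(t_n))\\
  &\quad+\int_0^{t_n}F_{k,h}(\tau)\bigl(f(X(t_n-\tau))-f(X(t_n))\bigr)\diff{\tau}.
\end{align*}
For the frozen term I would pull the $\F_{t_n}$-measurable random variable $f(X(t_n))$ out of the deterministic Bochner integral and apply Lemma \ref{lem:Fkh2}\,(i) with $\rho=1-r\in[0,1]$, yielding an operator norm bound $C(h^{1+r}+k^{(1+r)/2})$ on $\dot{H}^{-1+r}\to H$. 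The factor $\|f(X(t_n))\|_{L_p(\Omega;\dot{H}^{-1+r})}$ is controlled by the sublinear growth in Assumption \ref{as4:f} together with the uniform moment bound \eqref{eq2:reg} for $X$.

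For the remainder I would apply Lemma \ref{lem:Fkh1}\,(iii) with $\rho=1-r$ to obtain the \emph{pointwise} estimate $\|F_{k,h}(\tau)\|_{\LB(\dot{H}^{-1+r},H)}\le C(h^{1+r}+k^{(1+r)/2})\tau^{-1}$, combined with the $\dot{H}^{-1+r}$-Lipschitz property \eqref{eq4:flip} and the Hölder regularity \eqref{eq2:hoelder} with $s=0$, giving
\begin{align*}
  \|f(X(t_n-\tau))-f(X(t_n))\|_{L_p(\Omega;\dot{H}^{-1+r})}\le C\tau^{\frac{1}{2}}.
\end{align*}
The resulting integrand $C(h^{1+r}+k^{(1+r)/2})\tau^{-\frac{1}{2}}$ is integrable on $(0,t_n)$ with a bound of order $T^{\frac{1}{2}}$, so the remainder also contributes at most $C(h^{1+r}+k^{(1+r)/2})$. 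Taking the maximum over $n$ completes the argument.

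The main obstacle is the singularity of $F_{k,h}(\tau)$ as $\tau\downarrow 0$: a naive use of the pointwise estimate from Lemma \ref{lem:Fkh1}\,(ii) would only absorb $\tau^{-(1-r)/2}$ against the Hölder factor $\tau^{1/2}$ and produce a bound of order $1$, without any small-parameter decay. The point of the decomposition above is exactly to put the sharp small-parameter factor where it is needed: the frozen part uses the integral estimate of Lemma \ref{lem:Fkh2}\,(i) to avoid the singularity altogether, while the remainder exploits the extra $\tau^{\frac{1}{2}}$ gained from time continuity of $X$ to tame the stronger $\tau^{-1}$ singularity of Lemma \ref{lem:Fkh1}\,(iii). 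This is precisely the mechanism the introduction advertises for avoiding the $\epsilon$-loss of \cite{jentzen2010a}.
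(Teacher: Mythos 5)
Your proposal is correct and follows essentially the same route as the paper: the sum is rewritten as $\int_0^{t_n}F_{k,h}(t_n-\sigma)f(X(\sigma))\diff{\sigma}$, split into the part with $f(X(\sigma))-f(X(t_n))$ (handled by Lemma \ref{lem:Fkh1}\,(iii) with $\rho=1-r$, the Lipschitz bound \eqref{eq4:flip} and the H\"older estimate \eqref{eq2:hoelder}) and the frozen part with $f(X(t_n))$ (handled by the integral estimate of Lemma \ref{lem:Fkh2}\,(i) with $\rho=1-r$, Assumption \ref{as4:f} and \eqref{eq2:reg}). Your closing remark about why the pointwise estimate alone would lose the small-parameter factor matches the paper's stated motivation for Lemma \ref{lem:Fkh2}.
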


\begin{proof}
  First, by recalling \eqref{eq2:errOp} it is convenient to write
  \begin{align*}
    &\sum_{j = 1}^n \int_{t_{j-1}}^{t_j} \big( S(t_n - \sigma) - S_{k,h}^{n-j+1}
    \big) f(X(\sigma)) \diff{\sigma}
     = \int_{0}^{t_n} F_{k,h}(t_n - \sigma) f(X(\sigma)) \diff{\sigma} 
  \end{align*}
  for all $n \in \{1,\ldots,N_k\}$. Then, it follows
  \begin{align*}
    &\Big\| \sum_{j = 1}^n
    \int_{t_{j-1}}^{t_j} \big( S(t_n - \sigma) - S_{k,h}^{n-j+1} \big)
    f(X(\sigma)) \diff{\sigma} \Big\|_{L_p(\Omega;H)} \\
    &\quad \le \Big\| 
    \int_{0}^{t_n} F_{k,h}(t_n - \sigma) \big(f(X(\sigma)) - f(X(t_n))\big)
    \diff{\sigma} \Big\|_{L_p(\Omega;H)}\\
    &\qquad + \Big\| 
    \int_{0}^{t_n} F_{k,h}(t_n - \sigma) f(X(t_n)) \diff{\sigma}
    \Big\|_{L_p(\Omega;H)} =: J_n^1 + J_n^2
  \end{align*}
  for all $n \in \{1,\ldots,N_k\}$.
  We estimate the two summands separately. For $J_{n}^1$ we apply Lemma
  \ref{lem:Fkh1} \emph{(iii)} with $\rho = 1 - r$ and obtain
  \begin{align*}
    J^1_n &\le \int_{0}^{t_n} \big\|
    F_{k,h}(t_n - \sigma) \big(f(X(\sigma)) - f(X(t_n))\big)
    \big\|_{L_p(\Omega;H)} \diff{\sigma}    \\
    &\le C \big( h^{1+r} + k^{\frac{1+r}{2}} \big) 
    \int_{0}^{t_n} (t_n - \sigma)^{-1}
    \big\| f(X(\sigma)) - f(X(t_n))
    \big\|_{L_p(\Omega;\dot{H}^{-1+r})} \diff{\sigma} \\
    &\le C \big( h^{1+r} + k^{\frac{1+r}{2}} \big) \int_{0}^{t_n} (t_n -
    \sigma)^{-1 + \frac{1}{2}} 
    \diff{\sigma} \le C T^{\frac{1}{2}} \big( h^{1+r} + k^{\frac{1+r}{2}}
    \big),
  \end{align*}
  where we also applied \eqref{eq4:flip} and \eqref{eq2:hoelder}. 
  
  The term $J_n^2$ is estimated by an application of Lemma
  \ref{lem:Fkh2} \emph{(i)} with $\rho = 1 - r$, Assumption \ref{as4:f} and
  \eqref{eq2:reg}, which yield
  \begin{align*}
    J_n^2 &\le C  \big( h^{1+r} + k^{\frac{1+r}{2}} \big) \big\| f(X(t_n))
    \big\|_{L_p(\Omega;\dot{H}^{-1 + r})}\\
    &\le  C  \big( h^{1+r} +
    k^{\frac{1+r}{2}} \big) \Big( 1 + \sup_{\sigma \in [0,T]} \| X(\sigma)
    \|_{L_p(\Omega;H)} \Big),
  \end{align*}
  for all  $h \in (0,1]$, $k \in (0,T]$ and $n \in \{1, \ldots, N_k\}$.
  This completes the proof of Lemma \ref{lem4:lin2}.
\end{proof}

\begin{lemma}
  \label{lem4:lin3}
  Let Assumptions \ref{as4:initial} to \ref{as4:g} be satisfied for some $r \in
  [0,1)$. If the spatial discretization satisfies Assumptions \ref{as:Vh} and 
  \ref{as:Vh2} it holds 
  \begin{align*}
    &\max_{n \in \{1,\ldots,N_k\}} \Big\| \sum_{j = 1}^n
    \int_{t_{j-1}}^{t_j} \big( S(t_n - \sigma) - S_{k,h}^{n-j+1} \big)
    g(X(\sigma)) \diff{W(\sigma)} \Big\|_{L_p(\Omega;H)}\\ 
    & \qquad \le C \big( h^{1+r} +
    k^{\frac{1+r}{2}} \big)  
  \end{align*}
  for all $h \in (0,1]$ and $k \in (0,T]$.
\end{lemma}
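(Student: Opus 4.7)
The plan is to rewrite the sum as a single stochastic integral
\[
\sum_{j=1}^n \int_{t_{j-1}}^{t_j} \bigl(S(t_n - \sigma) - S_{k,h}^{n-j+1}\bigr) g(X(\sigma)) \diff{W(\sigma)} = \int_0^{t_n} F_{k,h}(t_n - \sigma) g(X(\sigma)) \diff{W(\sigma)}
\]
using \eqref{eq2:errOp} and \eqref{eq2:defSkh}, then apply the Burkholder-Davis-Gundy inequality (Proposition~\ref{prop:stoch_int}). The key structural idea is a ``freezing at the terminal time'' decomposition $g(X(\sigma)) = g(X(t_n)) + (g(X(\sigma)) - g(X(t_n)))$ performed \emph{after} BDG; once BDG has converted the stochastic integral into a deterministic $L^2(\sigma)$-norm of a Hilbert-Schmidt-valued process, the lack of $\mathcal{F}_\sigma$-adaptedness of $g(X(t_n))$ is irrelevant and the triangle inequality on the Hilbert-Schmidt norm is perfectly legitimate.

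For the frozen contribution I would expand the Hilbert-Schmidt norm along an orthonormal basis $(\phi_\ell)$ of $U_0$ and apply the sharp integral estimate from Lemma~\ref{lem:Fkh2}~(ii) (this is precisely where Assumption~\ref{as:Vh2} enters) with $\rho = r$ to each $g(X(t_n))\phi_\ell \in \dot H^r$. Summing over $\ell$ gives
\[
\Bigl( \int_0^{t_n} \|F_{k,h}(\tau) g(X(t_n)) \|_{\LB_2^0}^2 \diff{\tau} \Bigr)^{1/2} \le C \bigl( h^{1+r} + k^{\frac{1+r}{2}} \bigr) \|g(X(t_n))\|_{\LB_{2,r}^0},
\]
and \eqref{eq4:glin} together with the regularity \eqref{eq2:reg} controls the right-hand side uniformly in $n$ after taking $L^p(\Omega)$.

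For the difference $g(X(\sigma)) - g(X(t_n))$ the plan is to use Lemma~\ref{lem:Fkh1}~(i) with $\nu = 0$, $\mu = 1+r$, which yields the pointwise (but singular) bound $\|F_{k,h}(\tau) y \| \le C (h^{1+r} + k^{(1+r)/2}) \tau^{-(1+r)/2} \|y\|$ for $y \in H$. Squaring, summing over the basis, and taking $L^{p/2}(\Omega)$ followed by Minkowski's integral inequality reduces the problem to
\[
\int_0^{t_n} (t_n - \sigma)^{-(1+r)} \|g(X(\sigma)) - g(X(t_n))\|_{L^p(\Omega;\LB_2^0)}^2 \diff{\sigma}.
\]
By \eqref{eq4:glip} and the Hölder regularity \eqref{eq2:hoelder} with $s = 0$ (so exponent $\tfrac{1}{2}$), the integrand is dominated by $C^2 (t_n - \sigma)^{-r}$, which is integrable exactly because $r < 1$.

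The main obstacle is controlling this second piece: the singularity $\tau^{-(1+r)/2}$ of the error operator combined with the non-adaptedness of the freezing point both look threatening, but they dovetail precisely. The non-adaptedness is harmless because it appears only after BDG, and the singularity is tamed by the $\tfrac{1}{2}$-order temporal regularity of $X$, which delivers just enough extra power of $t_n - \sigma$ to reach the integrability threshold. The condition $r < 1$ in the statement of the lemma reflects exactly this threshold. Adding the two contributions yields the claimed bound, uniformly in $n \in \{1,\ldots,N_k\}$ and in $h, k$.
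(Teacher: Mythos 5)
Your proposal is correct and follows essentially the same route as the paper: rewrite the sum as $\int_0^{t_n}F_{k,h}(t_n-\sigma)g(X(\sigma))\,\mathrm{d}W(\sigma)$, apply Proposition~\ref{prop:stoch_int}, split off the terminal-time-frozen term $g(X(t_n))$ after BDG, treat it with Lemma~\ref{lem:Fkh2}~(ii) ($\rho=r$), \eqref{eq4:glin} and \eqref{eq2:reg}, and handle the difference via Lemma~\ref{lem:Fkh1}~(i) with $\mu=1+r$, $\nu=0$ combined with \eqref{eq4:glip} and \eqref{eq2:hoelder}, the singularity $(t_n-\sigma)^{-r}$ being integrable precisely because $r<1$. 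Even the passage to $\|\cdot\|_{L_p(\Omega;\LB_2^0)}$ inside the time integral (your Minkowski step) matches the paper's reference to the second inequality of Proposition~\ref{prop:stoch_int}.
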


\begin{proof}
  As in the proof of Lemma \ref{lem4:lin2}, by \eqref{eq2:errOp}, we first
  rewrite the sum inside the norm as
  \begin{align*}
    &\sum_{j = 1}^n \int_{t_{j-1}}^{t_j} \big( S(t_n - \sigma) - S_{k,h}^{n-j+1}
    \big) g(X(\sigma)) \diff{W(\sigma)}
     = \int_{0}^{t_n} F_{k,h}(t_n - \sigma) g(X(\sigma)) \diff{W(\sigma)} 
  \end{align*}
  for all $n \in \{1,\ldots,N_k\}$. Then, it follows by Proposition
  \ref{prop:stoch_int} 
  \begin{align*}
    &\Big\| \sum_{j = 1}^n
    \int_{t_{j-1}}^{t_j} \big( S(t_n - \sigma) - S_{k,h}^{n-j+1} \big)
    g(X(\sigma)) \diff{W(\sigma)} \Big\|_{L_p(\Omega;H)} \\
    &\quad \le C(p) \Big( \E \Big[ \Big(
    \int_{0}^{t_n} \big\| F_{k,h}(t_n - \sigma) g(X(\sigma))
    \big\|^2_{\LB_2^0} \diff{\sigma} \Big)^{\frac{p}{2}}
    \Big] \Big)^{\frac{1}{p}}
    \\
    &\quad \le C(p) \Big( \E \Big[ \Big(
    \int_{0}^{t_n} \big\| F_{k,h}(t_n - \sigma) \big( 
    g(X(\sigma)) - g(X(t_n)) \big) \big\|^2_{\LB_2^0} \diff{\sigma}
    \Big)^{\frac{p}{2}} \Big] \Big)^{\frac{1}{p}}  \\ 
    &\qquad + C(p)\Big( \E \Big[  \Big(
    \int_{0}^{t_n} \big\| F_{k,h}(t_n - \sigma) g(X(t_n))
    \big\|^2_{\LB_2^0} \diff{\sigma} \Big)^{\frac{p}{2}} \Big]
    \Big)^{\frac{1}{p}} 
    =: C(p) \big( J_n^3 + J_n^4 \big)
  \end{align*}
  for all $n \in \{1,\ldots,N_k\}$.
  We estimate the two summands separately. For $J_{n}^3$ we apply Lemma
  \ref{lem:Fkh1} \emph{(i)} with $\mu = 1 + r$ and $\nu = 0$ and obtain
  by \eqref{eq4:glip} and \eqref{eq2:hoelder}
  \begin{align*}
    J^3_n &\le C \big( h^{1+r} + k^{\frac{1+r}{2}}
    \big) \Big( \int_{0}^{t_n} (t_n - \sigma)^{-1-r}  \big\| g(X(\sigma)) -
    g(X(t_n))  \big\|_{L_p(\Omega;\LB_2^0)}^2 \diff{\sigma}
    \Big)^{\frac{1}{2}} \\ 
    %&\le C \big( h^{1+r} + k^{\frac{1+r}{2}} \big) \Big(
    %\int_{0}^{t_n} (t_n - \sigma)^{-1-r}
    %\big\| X(\sigma) - X(t_n) \big\|_{L_p(\Omega;H)}^2 \diff{\sigma}
    %\Big)^{\frac{1}{2}} \\ 
    &\le C \big( h^{1+r} + k^{\frac{1+r}{2}} \big) \Big( \int_{0}^{t_n} (t_n -
    \sigma)^{- r} \diff{\sigma} \Big)^{\frac{1}{2}} \le C T^{\frac{1-r}{2}}
    \big( h^{1+r} + k^{\frac{1+r}{2}} \big), 
  \end{align*}
  where we also applied the same technique as in the proof of the second
  inequality of Proposition \ref{prop:stoch_int}. 
  
  For the estimate of $J_n^4$ we first apply Lemma
  \ref{lem:Fkh2} \emph{(ii)} with $\rho = r$. Then \eqref{eq4:glin} and 
  \eqref{eq2:reg} yield
  \begin{align*}
    J_n^4 &\le C  \big( h^{1+r} + k^{\frac{1+r}{2}} \big) \big\| g(X(t_n))
    \big\|_{L_p(\Omega;\LB_{2,r}^0)}\\
    &\le  C  \big( h^{1+r} +
    k^{\frac{1+r}{2}} \big) \Big( 1 + \sup_{\sigma \in [0,T]} \| X(\sigma)
    \|_{L_p(\Omega;\dot{H}^r)} \Big),
  \end{align*}
  for all  $h \in (0,1]$, $k \in (0,T]$ and $n \in \{1, \ldots, N_k\}$.
  The proof is complete.
\end{proof}

\begin{remark}
  Let us stress that the case $r=1$ is not included in Lemma \ref{lem4:lin3}.
  The reason for this is found in the estimate of the term $J_n^3$, where a
  blow up occurs for $r = 1$. This problem can be avoided under stronger
  assumptions on $g$ as, for example, the existence of an parameter value $s
  \in (0,1]$ such that 
  \begin{align*}
    \| g(x_1) - g(x_2) \|_{\LB_{2,s}^0} \le C_g \| x_1 - x_2 \|_{s}
  \end{align*}
  for all $x_1, x_2 \in \dot{H}^s$. This is often satisfied for linear $g$ as
  shown in \cite{barth2011}. 
\end{remark}

By Lemma \ref{lem:cons} it therefore remains to investigate the
order of convergence of the fifth and final term, which after inserting
\eqref{eq4:Phih} is dominated by the following two summands 
\begin{align}
  \label{eq4:cons5}
  \begin{split}
  &\max_{n \in \{1,\ldots,N_k\}} \Big\| \sum_{j = 1}^{n} S_{k,h}^{n-j} \Big( -
  \int_{t_{j-1}}^{t_j} S_k f(X(\sigma)) \diff{\sigma} + \int_{t_{j-1}}^{t_j}S_k
  g(X(\sigma)) \diff{W(\sigma)} \\
  &\qquad \qquad \qquad - \Phi_h(X(t_{j-1}),t_{j-1},k) \Big)
  \Big\|_{L_p(\Omega;H)}\\
  &\quad \le \max_{n \in \{1,\ldots,N_k\}} \Big\| \sum_{j = 1}^{n}
  S_{k,h}^{n-j +1 } \int_{t_{j-1}}^{t_j} f(X(\sigma)) - f(X(t_{j-1}))
  \diff{\sigma} \Big\|_{L_p(\Omega;H)}\\
  &\qquad + \max_{n \in \{1,\ldots,N_k\}} \Big\| \sum_{j = 1}^{n}
  S_{k,h}^{n-j +1 } \int_{t_{j-1}}^{t_j} \Big( g(X(\sigma)) - g(X(t_{j-1}))\\
  &\qquad \qquad \qquad \qquad \qquad - g'(X(t_{j-1}))\big[
  \Gamma_{X|_{\mathcal{T}_k}}(\sigma) \big] \Big) \diff{W(\sigma)}
  \Big\|_{L_p(\Omega;H)}, 
  \end{split}
\end{align}
where we recall from \eqref{eq4:Gamma} that
\begin{align}
    \label{eq4:GammaX}
    \Gamma_{X|_{\mathcal{T}_k}}(\sigma) &:=
    \begin{cases}
      0 \in H, & \text{for } \sigma = 0,\\
      \int^{\sigma}_{t_{j-1}} g(X(t_{j-1})) \diff{W(\tau)}, & \text{for }
      \sigma \in (t_{j-1}, t_j], \; j \in \{1,\ldots,N_k\}.     
    \end{cases}
\end{align}
The remaining two lemmas in this section are concerned with the estimate of the
two summands in \eqref{eq4:cons5}.

\begin{lemma}
  \label{lem4:nonlin1}
  Let Assumptions \ref{as4:initial} to \ref{as4:g} be satisfied for some $r \in
  [0,1)$. If the spatial discretization satisfies Assumption \ref{as:Vh} it
  holds 
  \begin{align*}
    &\max_{n \in \{1,\ldots,N_k\}} \Big\| \sum_{j = 1}^{n}
    S_{k,h}^{n-j +1 } \int_{t_{j-1}}^{t_j} f(X(\sigma)) - f(X(t_{j-1}))
    \diff{\sigma} \Big\|_{L_p(\Omega;H)} \le C k^{\frac{1+r}{2}}
  \end{align*}
  for all $h \in (0,1]$ and $k \in (0,T]$.
\end{lemma}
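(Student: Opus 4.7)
My plan is to rewrite the discrete sum as a single continuous integral, linearize $f$ around $X(\hat\sigma)$ where $\hat\sigma := t_{j-1}$ for $\sigma \in (t_{j-1}, t_j]$, insert the mild form of $X(\sigma) - X(\hat\sigma)$, and for the resulting stochastic piece use a stochastic Fubini argument. By the piecewise definition \eqref{eq2:defSkh} one has $S_{k,h}^{n-j+1} = S_{k,h}(t_n - \sigma)$ for a.e.\ $\sigma \in [t_{j-1}, t_j]$, so the sum inside the norm rewrites as $\int_0^{t_n} S_{k,h}(t_n - \sigma) P_h [f(X(\sigma)) - f(X(\hat\sigma))] \diff{\sigma}$. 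Using the Taylor decomposition
\[
f(X(\sigma)) - f(X(\hat\sigma)) = f'(X(\hat\sigma)) \bigl( X(\sigma) - X(\hat\sigma) \bigr) + R(\sigma),
\]
the second Lipschitz estimate in \eqref{eq4:flip} together with \eqref{eq2:hoelder} yields $\|R(\sigma)\|_{L_p(\Omega;\dot{H}^{-1+r})} \le C_f \|X(\sigma) - X(\hat\sigma)\|_{L_{2p}(\Omega;H)}^2 \le C k$.

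The key tool for the outer factor is the fractional smoothing $\|S_{k,h}(t) P_h x\| \le C t^{-(1-r)/2} \|x\|_{-1+r}$, which I would obtain by writing $S_{k,h}(t) P_h = F_{k,h}(t) + S(t)$ and combining Lemma \ref{lem:Fkh1}(ii) with $\rho = 1-r$ and the analogous analytic semigroup bound. Inserting the mild form $X(\sigma) - X(\hat\sigma) = I_1(\sigma) + I_2(\sigma) + I_3(\sigma)$ (semigroup increment, drift integral, stochastic integral), the deterministic pieces satisfy $\|I_1(\sigma)\|_{L_p(\Omega;H)}, \|I_2(\sigma)\|_{L_p(\Omega;H)} \le C k^{(1+r)/2}$ using the standard bounds $\|(S(t) - \Id) y\| \le C t^{(1+r)/2} \|y\|_{1+r}$ and $\|S(t) y\| \le C t^{-(1-r)/2} \|y\|_{-1+r}$, the regularity \eqref{eq2:reg}, and Assumption \ref{as4:f}. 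Combined with $\int_0^T \sigma^{-(1-r)/2} \diff{\sigma} \le C T^{(1+r)/2}$ this delivers a contribution of order $k^{(1+r)/2}$ from $I_1, I_2$ and $O(k) \le O(k^{(1+r)/2})$ from the remainder $R$.

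The hard part is the stochastic contribution $f'(X(\hat\sigma)) I_3(\sigma)$, because $I_3$ only has H\"older exponent $\tfrac{1}{2}$ and the naive estimate would yield only $O(k^{1/2})$. To recover the missing factor $k^{r/2}$ I would use stochastic Fubini: since $f'(X(t_{j-1}))$ and $S_{k,h}^{n-j+1} P_h$ are $\F_{t_{j-1}}$-measurable and $\sigma$-independent on $[t_{j-1}, t_j]$, and $\int_\tau^{t_j} S(\sigma - \tau) \diff{\sigma} = A^{-1}(\Id - S(t_j - \tau))$, the contribution of the $j$-th block becomes
\[
\int_{t_{j-1}}^{t_j} S_{k,h}^{n-j+1} P_h f'(X(t_{j-1})) A^{-1} \bigl( \Id - S(t_j - \tau) \bigr) g(X(\tau)) \diff{W(\tau)}.
\]
Gluing these into a single stochastic integral on $[0, t_n]$ and applying Proposition \ref{prop:stoch_int} with $\|A^{-1}(\Id - S(t))\|_{\LB(H)} \le C t$ (from $A^{-1}(\Id-S(t))=\int_0^t S(u)\diff{u}$) produces a bound of the shape $C k^{3/2} \bigl( k \sum_{j=1}^n t_{n-j+1}^{-(1-r)} \bigr)^{1/2}$.

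The last step is to balance this weighted sum. For $r \in (0, 1)$ one has $k \sum_j t_{n-j+1}^{-(1-r)} \le C T^r$, which gives the desired $O(k^{(1+r)/2})$. At the endpoint $r = 0$ the sum generates an extra logarithmic factor $\log(T/k)$ that must be absorbed by the prefactor $k$ via the elementary bound $k (\log(T/k))^{1/2} \le C k^{1/2}$ on $(0, T]$. This logarithmic endpoint, together with the rigorous justification of stochastic Fubini in the present Hilbert-space setting, is where the main technical work lies; everything else reduces to routine BDG and semigroup estimates.
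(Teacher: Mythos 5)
Your argument is correct, but it takes a genuinely different route from the paper's. The paper splits $f(X(\sigma))-f(X(t_{j-1}))$ by inserting the conditional expectation $\E[f(X(\sigma))\,|\,\F_{t_{j-1}}]$: the martingale part is handled with Burkholder's inequality, whose square-function structure upgrades the naive per-block H\"older bound $O(k^{1/2})$ to the required total $O(k^{\frac{1+r}{2}})$, while the compensator part is treated with the mean value theorem, where the conditional expectation annihilates the leading stochastic increment paired with the frozen derivative $f'(X(t_{j-1}))$, leaving a product of two $O(k^{1/2})$ factors. You instead Taylor-expand with frozen $f'(X(t_{j-1}))$ plus a quadratic remainder (correctly $O(k)$ in $\dot{H}^{-1+r}$ via the second line of \eqref{eq4:flip} and \eqref{eq2:hoelder}) and tame the dangerous stochastic increment by stochastic Fubini, so that the $\diff{\sigma}$-integration of the semigroup produces $A^{-1}(\Id_H - S(t_j-\tau))$, an operator of norm $O(k)$ --- the same cancellation, realized by time-smoothing rather than martingale orthogonality. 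Both routes close: your derivation of the fractional smoothing $\|S_{k,h}(t)P_h x\|\le Ct^{-\frac{1-r}{2}}\|x\|_{-1+r}$ from $S_{k,h}(t)P_h = F_{k,h}(t)+S(t)$ and Lemma \ref{lem:Fkh1}(ii) is legitimate, and the $r=0$ logarithmic endpoint is handled correctly. Two small remarks: your stochastic-Fubini bound should read $Ck\bigl(k\sum_{j=1}^n t_{n-j+1}^{-(1-r)}\bigr)^{1/2}$ rather than $Ck^{3/2}(\cdots)^{1/2}$ (the block integral $\int_{t_{j-1}}^{t_j}(t_j-\tau)^2\diff{\tau}=k^3/3$ contributes $k^{3/2}$ before the $k^{-1/2}$ is absorbed into the Riemann sum), but either way the term is $O(k)\le O(k^{\frac{1+r}{2}})$, so the slip is harmless. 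The trade-off is that your route must justify stochastic Fubini in the Hilbert-space setting, whereas the paper's route needs Burkholder for $H$-valued discrete martingales but only the integer-order smoothing \eqref{eq2:smoothSkh}; the paper's $\Theta_1,\Theta_2,\Theta_3$ decomposition is moreover reused verbatim in the proof of Lemma \ref{lem4:nonlin2}, an economy your approach does not share.
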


\begin{proof}
  For every $n \in \{1,\ldots,N_k\}$ we first insert the conditional
  expectation in the following way
  \begin{align}
    \label{eq4:consT1}
    \begin{split}
      &\Big\| \sum_{j = 1}^{n} S_{k,h}^{n-j +1 } \int_{t_{j-1}}^{t_j}
      f(X(\sigma)) - f(X(t_{j-1})) \diff{\sigma} \Big\|_{L_p(\Omega;H)}  \\
      &\quad \le \Big\| \sum_{j = 1}^{n} S_{k,h}^{n-j +1 } \int_{t_{j-1}}^{t_j}
      f(X(\sigma)) - \E\big[ f(X(\sigma)) \big| \F_{t_{j-1}} \big]
      \diff{\sigma} \Big\|_{L_p(\Omega;H)}\\
      &\qquad + \Big\| \sum_{j = 1}^{n} S_{k,h}^{n-j +1 } \int_{t_{j-1}}^{t_j}
      \E\big[ f(X(\sigma)) \big| \F_{t_{j-1}} \big]
      - f(X(t_{j-1})) \diff{\sigma} \Big\|_{L_p(\Omega;H)}.
    \end{split}
  \end{align}
  Thus, for the summands of the first term it follows
  \begin{align*}
    \E \Big[ S_{k,h}^{n-j +1 } \int_{t_{j-1}}^{t_j}
    f(X(\sigma)) - \E\big[ f(X(\sigma)) \big| \F_{t_{j-1}} \big]
    \diff{\sigma} \Big| \F_{t_{\ell}} \Big] = 0 \in H    
  \end{align*}
  for every $j,\ell \in \{1, \ldots, n\}$ with $\ell < j$. Consequently, by
  setting 
  \begin{align*}
    M_i := \sum_{j = 1}^i S_{k,h}^{n-j +1 } \int_{t_{j-1}}^{t_j}
      f(X(\sigma)) - \E\big[ f(X(\sigma)) \big| \F_{t_{j-1}} \big]
      \diff{\sigma}, \quad i \in \{0,1,\ldots,n\},
  \end{align*}
  we obtain a discrete time martingale in $L_p(\Omega;H)$. Thus, Burkholder's
  inequality \cite[Th.~3.3]{burkholder1991} is applicable and yields together
  with \eqref{eq2:discsmoothing} and \eqref{eq2:discnorm}
  \begin{align*}
    &\Big\| \sum_{j = 1}^{n} S_{k,h}^{n-j +1 } \int_{t_{j-1}}^{t_j}
    f(X(\sigma)) - \E\big[ f(X(\sigma)) \big| \F_{t_{j-1}} \big]
    \diff{\sigma} \Big\|_{L_p(\Omega;H)}\\
    &\; \le C \Big( \E \Big[ \Big( \sum_{j = 1}^n \Big\| S_{k,h}^{n-j +1 }
    \int_{t_{j-1}}^{t_j} f(X(\sigma)) - \E\big[ f(X(\sigma)) \big| \F_{t_{j-1}}
    \big] \diff{\sigma} \Big\|^2 \Big)^{\frac{p}{2}} \Big]
    \Big)^{\frac{1}{p}} \\
    &\; \le C \Big(  \sum_{j = 1}^n \Big\| A_h^{\frac{1}{2}} S_{k,h}^{n-j +1
    } 
    \int_{t_{j-1}}^{t_j} A_h^{-\frac{1}{2}} P_h \big( f(X(\sigma)) - \E\big[
    f(X(\sigma)) \big| \F_{t_{j-1}} \big] \big) \diff{\sigma}
    \Big\|^2_{L_p(\Omega;H)} \Big)^{\frac{1}{2}}\\
    &\; \le C \Big(  \sum_{j = 1}^n t_{n-j +1}^{-1} k \int_{t_{j-1}}^{t_j} 
    \big\| f(X(\sigma)) - \E\big[ f(X(\sigma)) \big| \F_{t_{j-1}} \big]
    \big\|_{L_p(\Omega;\dot{H}^{-1})}^2 \diff{\sigma} \Big)^{\frac{1}{2}}.
  \end{align*}
  In addition, since $\| \E [ G | \F_{t_{j-1}}]
  \|_{L_p(\Omega;\dot{H}^{-1})} \le  \| G
  \|_{L_p(\Omega;\dot{H}^{-1})}$ for all  $G \in L_p(\Omega;\dot{H}^{-1})$ it
  follows for all $\sigma \in [t_{j-1}, t_j]$
  \begin{align*}
    &\big\| f(X(\sigma)) - \E\big[ f(X(\sigma)) \big| \F_{t_{j-1}} \big]
    \big\|_{L_p(\Omega;\dot{H}^{-1})} \\
    &\quad = \big\| f(X(\sigma)) -
    f(X(t_{j-1})) + 
     \E\big[f(X(t_{j-1})) - f(X(\sigma)) \big| \F_{t_{j-1}} \big]
    \big\|_{L_p(\Omega;\dot{H}^{-1})}\\
    &\quad \le 2 \big\| f(X(\sigma)) -
    f(X(t_{j-1}))\big\|_{L_p(\Omega;\dot{H}^{-1})} \le C | \sigma -
    t_{j-1}|^{\frac{1}{2}},
  \end{align*}
  where we also used \eqref{eq4:flip} and \eqref{eq2:hoelder} in the last step.
  Therefore, in the same way as in \eqref{eq4:sumsing} the estimate of the
  first summand in \eqref{eq4:consT1} is completed by  
  \begin{align*}
    &\Big\| \sum_{j = 1}^{n} S_{k,h}^{n-j +1 } \int_{t_{j-1}}^{t_j}
    f(X(\sigma)) - \E\big[ f(X(\sigma)) \big| \F_{t_{j-1}} \big]
    \diff{\sigma} \Big\|_{L_p(\Omega;H)}\\
    &\quad \le C \Big(  \sum_{j = 1}^n t_{n-j +1}^{-1} k^3 
    \Big)^{\frac{1}{2}}    \le C \Big( k^3 \sum_{j = 1}^n t_{n-j
    +1}^{-r} t_{n-j +1}^{-1+r} \Big)^{\frac{1}{2}}\\
    &\quad \le C \Big( k^3 \sum_{j = 1}^n t_{n-j +1}^{-r} k^{-1+r}
    \Big)^{\frac{1}{2}} \le C k^{\frac{1+r}{2}}. 
  \end{align*}
  For the second summand in \eqref{eq4:consT1} we make use of the mean value
  theorem for Fr\'echet differentiable mappings, which reads
  \begin{align*}
    f(X(\tau_1)) &= f(X(\tau_2)) + \int_{0}^{1} f'(X(\tau_2)+ s (X(\tau_1) -
    X(\tau_2))) \big[ X(\tau_1) - X(\tau_2) \big] \diff{s} 
  \end{align*}
  for all $\tau_1,\tau_2 \in [0,T]$. For convenience we introduce the short
  hand notation
  \begin{align*}
    f'(\tau_1,\tau_2;s) := f'(X(\tau_2)+ s (X(\tau_1) - X(\tau_2)))
  \end{align*}
  for all $\tau_1,\tau_2 \in [0,T]$ and $s \in [0,1]$. Then, by inserting
  \eqref{eq1:mild} we obtain the identity
  \begin{align*}
    &\E\big[ f(X(\sigma)) | \F_{t_{j-1}}\big] - f(X(t_{j-1})) \\
    &\quad = \E \Big[\int_{0}^{1} f'(\sigma,t_{j-1};s)\big[
    \big(S(\sigma-t_{j-1}) - \Id_H\big) X(t_{j-1}) \big] \diff{s} \Big|
    \F_{t_{j-1}} \Big] \\ 
    &\qquad - \E \Big[ \int_{0}^{1}
    f'(\sigma,t_{j-1};s) \Big[  \int_{t_{j-1}}^{\sigma} S(\sigma - \tau)
    f(X(\tau))  \diff{\tau} \Big]\diff{s} \Big| \F_{t_{j-1}} \Big] \\
    &\qquad +  \E \Big[ \int_{0}^{1} f'(\sigma,t_{j-1};s) 
    \Big[  \int_{t_{j-1}}^{\sigma} S(\sigma - \tau)
    g(X(\tau))  \diff{W(\tau)} \Big] \diff{s} \Big| \F_{t_{j-1}} \Big]\\
    &\quad =: \Theta_1 (\sigma,t_{j-1}) + \Theta_2 (\sigma,t_{j-1}) +
    \Theta_3 (\sigma,t_{j-1}), 
  \end{align*}  
  which holds $\P$-almost surely. Hence,   
  the second summand in \eqref{eq4:consT1} satisfies 
  \begin{align}
    \label{eq4:consT2}
    \begin{split}
    &\Big\| \sum_{j = 1}^{n}
    S_{k,h}^{n-j +1 } \int_{t_{j-1}}^{t_j} \E\big[ f(X(\sigma)) |
    \F_{t_{j-1}}\big] - f(X(t_{j-1}))
    \diff{\sigma} \Big\|_{L_p(\Omega;H)}  \\
    &\quad = \Big\| \sum_{j = 1}^{n} S_{k,h}^{n-j +1 } \int_{t_{j-1}}^{t_j}
    \Theta_1 (\sigma,t_{j-1}) + \Theta_2 (\sigma,t_{j-1}) +
    \Theta_3 (\sigma,t_{j-1})
    \diff{\sigma} \Big\|_{L_p(\Omega;H)}\\   
    &\quad \le C \sum_{j = 1}^n t_{n-j+1}^{-\frac{1}{2}} \int_{t_{j-1}}^{t_j}
    \big\| \Theta_1 (\sigma,t_{j-1}) + \Theta_2 (\sigma,t_{j-1}) +
    \Theta_3 (\sigma,t_{j-1})\big\|_{L_p(\Omega;\dot{H}^{-1})} \diff{\sigma}
  \end{split}
  \end{align}
  for every $n \in \{1,\ldots,N_k\}$, where we again applied
  \eqref{eq2:discsmoothing} and \eqref{eq2:discnorm} in the last step.

  Below we show that
  \begin{align}
    \label{eq4:thetai}
    \big\| \Theta_i (\sigma,t_{j-1})\big\|_{L_p(\Omega;\dot{H}^{-1})}  \le C |
    \sigma - t_{j-1} |^{\frac{1+r}{2}}, \quad \text{for } i \in \{1,2,3\}.    
  \end{align}
  Then this is used to complete the estimate of \eqref{eq4:consT2} by
  \begin{align*}
    &\Big\| \sum_{j = 1}^{n}
    S_{k,h}^{n-j +1 } \int_{t_{j-1}}^{t_j} \E\big[ f(X(\sigma)) |
    \F_{t_{j-1}}\big] - f(X(t_{j-1}))
    \diff{\sigma} \Big\|_{L_p(\Omega;H)}  \\
    &\quad \le C \sum_{j = 1}^n t_{n-j+1}^{-\frac{1}{2}} \int_{t_{j-1}}^{t_j}
    | \sigma - t_{j-1}|^{\frac{1+r}{2}} \diff{\sigma} \le C k^{\frac{1+r}{2}},
  \end{align*}
  where we again applied \eqref{eq4:sumsing}. Thus, the assertion is
  proved if we show \eqref{eq4:thetai}. 

  For the estimation of $\Theta_1$ we recall that $\| \E [ G | \F_{t_{j-1}}]
  \|_{L_p(\Omega;\dot{H}^{-1})} \le  \| G
  \|_{L_p(\Omega;\dot{H}^{-1})}$ for all  $G \in L_p(\Omega;\dot{H}^{-1})$ and
  obtain
  \begin{align*}
    &\big\| \Theta_1 (\sigma,t_{j-1})\big\|_{L_p(\Omega;\dot{H}^{-1})}\\
    &\quad \le \Big\| \int_{0}^{1} f'(\sigma,t_{j-1};s)\big[ 
    \big(S(\sigma-t_{j-1}) - \Id_H\big) X(t_{j-1}) \big] \diff{s}
    \Big\|_{L_p(\Omega;\dot{H}^{-1})} \\ 
    &\quad \le\int_{0}^{1} \Big( \E \Big[ \big\|
    f'(\sigma,t_{j-1};s) \big\|_{\LB(H,\dot{H}^{-1})}^p \big\| \big(
    S(\sigma - t_{j-1}) - \Id_H \big) X(t_{j-1}) \big\|^p \Big]
    \Big)^{\frac{1}{p}} \diff{s}\\
    &\quad \le C \sup_{x \in H} \big\| f'(x) \big\|_{\LB(H,\dot{H}^{1+r})} 
    \big\| \big( S(\sigma - t_{j-1}) - \Id_H \big) X(t_{j-1})
    \big\|_{L_p(\Omega;H)}.
  \end{align*}
  Further, from \cite[Ch.~2.6, Th.~6.13]{pazy1983} it follows
  \begin{align*}
    \big\| \big( S(\sigma - t_{j-1}) - \Id_H \big) X(t_{j-1})
    \big\|_{L_p(\Omega;H)} &\le C (\sigma - t_{j-1})^{\frac{1+r}{2}} \|
    X(t_{j-1}) \big\|_{L_p(\Omega;\dot{H}^{1+r})}\\
    &\le C (\sigma - t_{j-1})^{\frac{1+r}{2}} \sup_{\sigma \in [0,T]} \|
    X(\sigma) \|_{L_p(\Omega;\dot{H}^{1+r})}    
  \end{align*}
  for all $\sigma \in [t_{j-1}, t_j]$. In the light of \eqref{eq2:reg} this
  proves \eqref{eq4:thetai} with $i = 1$. 

  By following the same steps, it holds for $\Theta_2$
  \begin{align*}
    &\big\| \Theta_2 (\sigma,t_{j-1})\big\|_{L_p(\Omega;\dot{H}^{-1})}\\
    &\quad \le C \sup_{x \in H} \big\| f'(x) \big\|_{\LB(H,\dot{H}^{-1+r})} 
    \Big\| \int_{t_{j-1}}^{\sigma} S(\sigma - \tau)
    f(X(\tau)) \diff{\tau} \Big\|_{L_p(\Omega;H)}.
  \end{align*}
  Now, by applying the fact that $$\| A^{\frac{1-r}{2}} S(\sigma - \tau)
  \|_{\LB(H)} \le C (\sigma - \tau)^{-\frac{1-r}{2}}, \quad \text{for all }
  t_{j-1} \le \tau < \sigma \le t_j,$$ we get for every $\sigma 
  \in [t_{j-1},t_j]$
  \begin{align*}
    \Big\| \int_{t_{j-1}}^{\sigma} S(\sigma - \tau)
    f(X(\tau)) \diff{\tau} \Big\|_{L_p(\Omega;H)}&\le C \int_{t_{j-1}}^{\sigma}
    (\sigma - \tau)^{-\frac{1-r}{2}} \| f(X(\tau))
    \|_{L_p(\Omega;\dot{H}^{-1+r})} \diff{\tau} \\
    &\le C \Big( 1 + \sup_{\sigma \in [0,T]} \big\| X(\sigma)
    \big\|_{L_p(\Omega;H)} \Big) k^{\frac{1+r}{2}}.
  \end{align*}
  As for $\Theta_1$ we therefore conclude 
  \begin{align*}
    \big\| \Theta_2 (\sigma,t_{j-1})\big\|_{L_p(\Omega;\dot{H}^{-1})} \le C
    k^{\frac{1+r}{2}} \quad \text{ for all } \sigma \in [t_{j-1},t_j].
  \end{align*}
  For the estimate of $\Theta_3$ we first apply the fact that
  \begin{align*}
    \E \Big[ \int_{0}^{1} f'(X(t_{j-1})) 
    \Big[  \int_{t_{j-1}}^{\sigma} S(\sigma - \tau)
    g(X(\tau))  \diff{W(\tau)} \Big] \diff{s} \Big| \F_{t_{j-1}} \Big] = 0.
  \end{align*}
  From this we get 
  \begin{align*}
    &\big\| \Theta_3 (\sigma,t_{j-1})\big\|_{L_p(\Omega;\dot{H}^{-1})}\\
    &\quad \le \int_{0}^{1} \Big\| \big( f'(\sigma,t_j,s) - f'(X(t_{j-1}))
    \big) \Big[ \int_{t_{j-1}}^{\sigma} S(\sigma - \tau)
    g(X(\tau))  \diff{W(\tau)} \Big]\Big\|_{L_p(\Omega;\dot{H}^{-1})}
    \diff{s}
  \end{align*}
  Further, for every $s \in [0,1]$ we derive by H\"older's inequality
  \begin{align*}
    &\Big\| \big( f'(\sigma,t_j,s) - f'(X(t_{j-1}))
    \big) \Big[ \int_{t_{j-1}}^{\sigma} S(\sigma - \tau)
    g(X(\tau))  \diff{W(\tau)} \Big]\Big\|_{L_p(\Omega;\dot{H}^{-1})}\\
    &\quad \le \Big( \E \Big[ \big\| f'(\sigma,t_j,s) - f'(X(t_{j-1}))
    \big\|_{\LB(H,\dot{H}^{-1})}^p 
    \Big\| \int_{t_{j-1}}^{\sigma} S(\sigma - \tau) g(X(\tau))  \diff{W(\tau)}
    \Big\|^p \Big] \Big)^{\frac{1}{p}}\\
    &\quad \le \big\| f'( X(t_{j-1}) + s (X(\sigma) - X(t_{j-1}))) -
    f'(X(t_{j-1})) 
    \big\|_{L_{2p}(\Omega;\LB(H,\dot{H}^{-1}))} \\
    &\qquad \times \Big\| \int_{t_{j-1}}^{\sigma}
    S(\sigma - \tau) g(X(\tau))  \diff{W(\tau)} \Big\|_{L_{2p}(\Omega;H)}.
  \end{align*}
  Now, we have by \eqref{eq4:flip} and \eqref{eq2:hoelder}
  \begin{align*}
    &\big\| f'( X(t_{j-1}) + s (X(\sigma) - X(t_{j-1}))) -
    f'(X(t_{j-1})) 
    \big\|_{L_{2p}(\Omega;\LB(H,\dot{H}^{-1}))}\\ 
    &\quad \le C_f \big\| X(\sigma) -
    X(t_{j-1}) \big\|_{L_{2p}(\Omega;H)} \le C (\sigma - t_{j-1})^{\frac{1}{2}}
  \end{align*}
  for all $s \in [0,1]$ and $\sigma \in [t_{j-1},t_j]$. In addition, by
  Proposition \ref{prop:stoch_int} it holds true that
  \begin{align*}
    &\Big\| \int_{t_{j-1}}^{\sigma}
    S(\sigma - \tau) g(X(\tau))  \diff{W(\tau)} \Big\|_{L_{2p}(\Omega;H)}\\
    &\quad \le C \Big( \int_{t_{j-1}}^{\sigma} \big\| S(\sigma - \tau)
    g(X(\tau)) \big\|^2_{L_p(\Omega;H)} \diff{\tau} \Big)^{\frac{1}{2}}
    \le C \Big( 1 + \sup_{\tau \in [0,T]} \big\| X(\tau)
    \big\|_{L_p(\Omega;H)} \Big) k^{\frac{1}{2}}
  \end{align*}
  for all $\sigma \in [t_{j-1},t_j]$. This completes the estimate of $\Theta_3$
  and, therefore, also the proof of the Lemma. 
\end{proof}

The last building block in the proof of consistency is the following lemma.

\begin{lemma}
  \label{lem4:nonlin2}
  Let Assumptions \ref{as4:initial} to \ref{as4:g} be satisfied for some $r \in
  [0,1)$. If the spatial discretization satisfies Assumption \ref{as:Vh} it
  holds 
  \begin{align*}
    &\max_{n \in \{1,\ldots,N_k\}} \Big\| \sum_{j = 1}^{n}
    S_{k,h}^{n-j +1 } \int_{t_{j-1}}^{t_j} \Big( g(X(\sigma)) - g(X(t_{j-1}))\\
    &\qquad \qquad \qquad - g'(X(t_{j-1}))\Big[ \int_{t_{j-1}}^{\sigma}
    g(X(t_{j-1})) \diff{W(\tau)}\Big] \Big)  
    \diff{W(\sigma)}  \Big\|_{L_p(\Omega;H)} \le C k^{\frac{1+r}{2}}   
  \end{align*}
  for all $h \in (0,1]$ and $k \in (0,T]$.
\end{lemma}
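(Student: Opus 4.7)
The plan is to write the entire sum inside the norm as a single stochastic integral over $[0,t_n]$ with a piecewise defined integrand, apply the Burkholder-Davis-Gundy inequality of Proposition~\ref{prop:stoch_int} together with the uniform bound \eqref{eq2:stabSkh} on $S_{k,h}(t)P_h$, and then estimate the resulting integrand pointwise on each subinterval $(t_{j-1},t_j]$. After this first reduction it suffices to show
\begin{align*}
  \big\| g(X(\sigma)) - g(X(t_{j-1})) - g'(X(t_{j-1}))\big[\Gamma_{X|_{\mathcal{T}_k}}(\sigma)\big] \big\|_{L_p(\Omega;\LB_2^0)} \le C (\sigma - t_{j-1})^{\frac{1+r}{2}}
\end{align*}
for all $\sigma \in (t_{j-1},t_j]$, since the double sum $\sum_{j=1}^n \int_{t_{j-1}}^{t_j} (\sigma-t_{j-1})^{1+r}\diff{\sigma}$ is then of order $k^{1+r}$, and the outer square root yields the claimed bound $C k^{(1+r)/2}$.

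To obtain the pointwise estimate I apply the Fr\'echet mean value theorem to write $g(X(\sigma)) - g(X(t_{j-1})) = \int_0^1 g'(X_s^{\sigma,j})[X(\sigma) - X(t_{j-1})]\diff{s}$, where $X_s^{\sigma,j} := X(t_{j-1}) + s(X(\sigma) - X(t_{j-1}))$, and substitute the mild formula \eqref{eq1:mild} for $X(\sigma) - X(t_{j-1})$. Comparing the resulting expression with the Milstein correction $g'(X(t_{j-1}))[\Gamma_{X|_{\mathcal{T}_k}}(\sigma)]$ produces five pieces: a semigroup piece $\int_0^1 g'(X_s^{\sigma,j})\diff{s}\cdot(S(\sigma-t_{j-1})-\Id_H)X(t_{j-1})$; a drift piece involving $\int_{t_{j-1}}^\sigma S(\sigma-\tau)f(X(\tau))\diff{\tau}$; a coefficient-change piece $\big(\int_0^1 (g'(X_s^{\sigma,j}) - g'(X(t_{j-1})))\diff{s}\big)\cdot\int_{t_{j-1}}^\sigma S(\sigma-\tau)g(X(\tau))\diff{W(\tau)}$; a semigroup-inside-stochastic-integral piece $g'(X(t_{j-1}))\cdot\int_{t_{j-1}}^\sigma (S(\sigma-\tau)-\Id_H)g(X(\tau))\diff{W(\tau)}$; and an integrand-change piece $g'(X(t_{j-1}))\cdot\int_{t_{j-1}}^\sigma (g(X(\tau)) - g(X(t_{j-1})))\diff{W(\tau)}$.

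For the first two pieces the smoothness estimate $\|(S(t)-\Id_H)x\| \le Ct^{(1+r)/2}\|x\|_{1+r}$ together with the regularity \eqref{eq2:reg} of $X$ at level $1+r$ and the mapping property $f\colon H\to\dot{H}^{-1+r}$ from Assumption~\ref{as4:f} directly yield the order $(\sigma-t_{j-1})^{(1+r)/2}$. The coefficient-change and integrand-change pieces are treated by combining the Lipschitz bounds of Assumption~\ref{as4:g}, Proposition~\ref{prop:stoch_int}, the H\"older regularity \eqref{eq2:hoelder}, and H\"older's inequality on $\Omega$ splitting into $L_{2p}(\Omega)$ factors, exactly as in the treatment of $\Theta_3$ inside the proof of Lemma~\ref{lem4:nonlin1}; both give the even better order $(\sigma-t_{j-1})$, which is admissible since $(\sigma-t_{j-1}) \le k^{(1-r)/2}(\sigma-t_{j-1})^{(1+r)/2}$. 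The delicate piece is the semigroup-inside-stochastic-integral one, where I combine Proposition~\ref{prop:stoch_int} with the smoothing estimate $\|(S(\sigma-\tau)-\Id_H)g(X(\tau))\|_{\LB_2^0} \le C(\sigma-\tau)^{r/2}\|g(X(\tau))\|_{\LB_{2,r}^0}$ and the growth bound \eqref{eq4:glin} together with \eqref{eq2:reg} at level $r$ to produce exactly $(\sigma-t_{j-1})^{(1+r)/2}$.

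The main obstacle will be this last piece, since it is the only one in which the gain beyond the Euler-Maruyama order $k^{1/2}$ is extracted from the additional spatial regularity of $g(X(\tau))$ encoded in the $\LB_{2,r}^0$-boundedness \eqref{eq4:glin}, rather than from cancellation against the Milstein correction. The four remaining pieces all profit directly from the subtraction of $g'(X(t_{j-1}))[\Gamma_{X|_{\mathcal{T}_k}}(\sigma)]$, which removes the leading-order stochastic contribution, whereas the semigroup-inside-stochastic-integral piece survives this cancellation and relies on the precise interplay between the mapping property $g(x)\in\LB_{2,r}^0$ and the fractional smoothing $(\sigma-\tau)^{r/2}$ coming from $S(\sigma-\tau)-\Id_H$; this is also the reason why the argument breaks down at $r=1$, in the same way as in the remark following Lemma~\ref{lem4:lin3}.
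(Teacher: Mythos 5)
Your proposal is correct and follows essentially the same route as the paper: reduce to the pointwise bound $\| g(X(\sigma)) - g(X(t_{j-1})) - g'(X(t_{j-1}))[\Gamma_{X|_{\mathcal{T}_k}}(\sigma)]\|_{L_p(\Omega;\LB_2^0)} \le C(\sigma-t_{j-1})^{\frac{1+r}{2}}$ and then expand via the Fr\'echet mean value theorem and the mild formula, with the decisive term being exactly the one you identify, handled through $(S(\sigma-\tau)-\Id_H)$-smoothing against the $\LB_{2,r}^0$-bound \eqref{eq4:glin}. The only cosmetic differences are that the paper first invokes Burkholder's inequality for the discrete martingale sum before applying Proposition \ref{prop:stoch_int} on each subinterval (arriving at the same integral bound as your single-stochastic-integral reduction), and that it groups your pieces 3--5 slightly differently (applying the averaged derivative to $\int S(\sigma-\tau)g\,\mathrm{d}W - \Gamma_X$ and the derivative increment to $\Gamma_X$), which changes nothing in the estimates.
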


\begin{proof}
  The proof mainly applies the same techniques as used in the proof of Lemma
  \ref{lem4:nonlin2}. First let us fix an arbitrary $n \in \{1,\ldots,N_k\}$
  and recall the notation $\Gamma_X$ in \eqref{eq4:Gamma}. 
  Then we note that
  \begin{align*}
    &\E \Big[ S_{k,h}^{n-j+1} \int_{t_{j-1}}^{t_j}  g(X(\sigma)) -
    g(X(t_{j-1}))
    %\\ &\qquad 
    - g'(X(t_{j-1}))\big[ \Gamma_X(\sigma) \big]  
    \diff{W(\sigma)} \Big| \F_{t_{j-1}} \Big] = 0
  \end{align*}
  for every $j \in \{1,\ldots,n\}$. Hence, the sum of these terms is a discrete
  time martingale in $L_p(\Omega;H)$. Hence, we first apply Burkholder's
  inequality \cite[Th.~3.3]{burkholder1991} and then Proposition
  \ref{prop:stoch_int} and obtain
  \begin{align*}
    &\Big\| \sum_{j = 1}^{n}
    S_{k,h}^{n-j +1 } \int_{t_{j-1}}^{t_j} g(X(\sigma)) - g(X(t_{j-1}))
    %\\ &\qquad \qquad \qquad 
    - g'(X(t_{j-1}))\big[ \Gamma_X(\sigma) \big]   
    \diff{W(\sigma)}  \Big\|_{L_p(\Omega;H)}\\
    &\quad \le C \Big( \E \Big[ \Big( \sum_{j = 1}^n \Big\| S_{k,h}^{n-j+1}
    \int_{t_{j-1}}^{t_j} \Big( g(X(\sigma)) - g(X(t_{j-1}))
    \\ &\quad \qquad \qquad \qquad 
    - g'(X(t_{j-1}))\big[ \Gamma_X(\sigma) \big]
    \Big) \diff{W(\sigma)} \Big\|^{2} \Big)^{\frac{p}{2}} \Big]
    \Big)^{\frac{1}{p}} \\  
    &\quad \le C \Big( \sum_{j = 1}^n \Big\| 
    \int_{t_{j-1}}^{t_j} \Big( g(X(\sigma)) - g(X(t_{j-1}))
    \\ &\quad \qquad \qquad \qquad
    - g'(X(t_{j-1}))\big[ \Gamma_X(\sigma) \big]
    \Big) \diff{W(\sigma)} \Big\|^{2}_{L_p(\Omega;H)} \Big)^{\frac{1}{2}}\\
    &\quad \le C \Big( \sum_{j = 1}^n \int_{t_{j-1}}^{t_j} \big\| 
    g(X(\sigma)) - g(X(t_{j-1}))
    %\\ &\quad \qquad \qquad \qquad
    - g'(X(t_{j-1}))\big[ \Gamma_X(\sigma) \big]
    \big\|^{2}_{L_p(\Omega;\LB_2^0)} \diff{\sigma} \Big)^{\frac{1}{2}}. 
  \end{align*}
  Consequently, if we show that there exists a constant such that
  \begin{align}
    \label{eq4:normg}
    \big\| g(X(\sigma)) - g(X(t_{j-1})) - g'(X(t_{j-1}))\big[ \Gamma_X(\sigma)
    \big] \big\|^{2}_{L_p(\Omega;\LB_2^0)} \le C k^{1+r}
  \end{align}
  for all $\sigma \in [t_{j-1},t_j]$, the proof is complete. In order to prove
  \eqref{eq4:normg} we again apply the mean value theorem for Fr\'echet
  differentiable mappings and obtain 
  \begin{align*}
    g(X(\sigma)) - g(X(t_{j-1})) = \int_0^1 g'(\sigma,t_{j-1},s) \big[
    X(\sigma) - X(t_{j-1}) \big] \diff{s}, 
  \end{align*}
  where we denote
  \begin{align*}
    g'(\tau_1,\tau_2,s) := g'( X(\tau_2) + s (X(\tau_1) - X(\tau_2)))\quad
    \text{ for all } \tau_1, \tau_2 \in [0,T],\; s \in [0,1]. 
  \end{align*}
  After inserting
  \eqref{eq1:mild} we get the estimate
  \begin{align*}
    & \big\| g(X(\sigma)) - g(X(t_{j-1})) - g'(X(t_{j-1}))\big[
    \Gamma_X(\sigma) \big] \big\|_{L_p(\Omega;\LB_2^0)}\\
    &\quad \le \int_0^1 \big\| g'(\sigma,t_{j-1},s) \big[ \big( S(\sigma -
    t_{j-1}) - \Id_H \big) X(t_{j-1}) \big] \big\|_{L_p(\Omega;\LB_2^0)}
    \diff{s}\\
    &\qquad + \int_0^1 \Big\| g'(\sigma,t_{j-1},s) \Big[
    \int_{t_{j-1}}^{\sigma}S(\sigma - \tau) f(X(\tau))
    \diff{\tau}\Big] \Big\|_{L_p(\Omega;\LB_2^0)} \diff{s}\\
    &\qquad + \int_0^1 \Big\| g'(\sigma,t_{j-1},s) \Big[
    \int_{t_{j-1}}^{\sigma} S(\sigma - \tau) g(X(\tau))
    \diff{W(\tau)} - \Gamma_X(\sigma)\Big] \Big\|_{L_p(\Omega;\LB_2^0)}
    \diff{s}\\ 
    &\qquad + \int_0^1 \Big\| \big( g'(\sigma,t_{j-1},s) - g'(X(t_{j-1})) \big)
    \big[ \Gamma_X(\sigma) \big] \Big\|_{L_p(\Omega;\LB_2^0)} \diff{s}\\
    &\quad =: J_5 + \ldots + J_8.
  \end{align*}
  We consider the terms $J_i$, $i \in \{5,\ldots,8\}$, one by one. The desired
  estimated of $J_5$ is obtained in the same
  way as for the term $\Theta_1$ in the proof of Lemma \ref{lem4:nonlin1},
  namely
  \begin{align*}
    J_5  &\le \int_0^1 \big( \E \big[  \big\| g'(\sigma,t_{j-1},s)
    \big\|_{\LB(H,\LB_2^0)}^p  \big\| \big( S(\sigma -
    t_{j-1}) - \Id_H \big) X(t_{j-1}) \big\|^p \big]
    \big)^{\frac{1}{p}} \diff{s}\\
    &\le C \sup_{x \in H} \big\| g(x) \big\|_{\LB(H,\LB_2^0)}
    \big\| \big( S(\sigma - t_{j-1}) - \Id_H \big) X(t_{j-1})
    \big\|_{L_p(\Omega;H)}\\
    &\le C \sup_{x \in H} \big\| g(x) \big\|_{\LB(H,\LB_2^0)}
    \sup_{\tau \in [0,T]} \| X(\tau) \|_{L_p(\Omega;\dot{H}^{1+r})}
    k^{\frac{1+r}{2}}.
  \end{align*}
  Likewise, the estimate of $J_6$ is done by the exact same steps as for the
  term $\Theta_2$ in the proof of Lemma \ref{lem4:nonlin1}. Thus, it holds
  \begin{align*}
    J_6 \le C \sup_{x \in H} \big\| g(x) \big\|_{\LB(H,\LB_2^0)} \Big( 1 +
    \sup_{\sigma \in [0,T]} \big\| X(\sigma) \big\|_{L_p(\Omega;H)} \Big)
    k^{\frac{1+r}{2}}.
  \end{align*}
  As above, the term $J_7$ is first estimated by
  \begin{align*}
    J_7&\le C \sup_{x \in H} \big\| g(x) \big\|_{\LB(H,\LB_2^0)} 
    \Big\| \int_{t_{j-1}}^{\sigma} S(\sigma - \tau) g(X(\tau))
    \diff{W(\tau)} - \Gamma_X(\sigma)\Big\|_{L_p(\Omega;H)}
  \end{align*}
  Then, after inserting the definition \eqref{eq4:Gamma} of $\Gamma_X$ and an
  application of Proposition \ref{prop:stoch_int} we arrive at
  \begin{align*}
    &\Big\| \int_{t_{j-1}}^{\sigma} S(\sigma - \tau) g(X(\tau))
    \diff{W(\tau)} - \Gamma_X(\sigma)\Big\|_{L_p(\Omega;H)}\\ 
    &\quad \le C \Big( \int_{t_{j-1}}^{\sigma} 
    \big\|  S(\sigma - \tau) g(X(\tau)) - g(X(t_{j-1}))
    \big\|_{L_p(\Omega;\LB_2^0)}^2 \diff{\tau} \Big)^{\frac{1}{2}}\\
    &\quad \le C \Big( \int_{t_{j-1}}^{\sigma} 
    \big\|  \big( S(\sigma - \tau) - \Id_H \big) g(X(\tau)) 
    \big\|_{L_p(\Omega;\LB_2^0)}^2 \diff{\tau} \Big)^{\frac{1}{2}}\\
    &\qquad + C \Big( \int_{t_{j-1}}^{\sigma} \big\|    
    g(X(\tau)) - g(X(t_{j-1})) \big\|_{L_p(\Omega;\LB_2^0)}^2 \diff{\tau}
    \Big)^{\frac{1}{2}}.
  \end{align*}
  For the first summand recall by \eqref{eq4:glin} that $g$ yields some
  additional spatial regularity. Together with \cite[Ch.~2.6,
  Th.~6.13]{pazy1983} we can use this to obtain
  \begin{align*}
    & \Big( \int_{t_{j-1}}^{\sigma} 
    \big\|  \big( S(\sigma - \tau) - \Id_H \big) g(X(\tau)) 
    \big\|_{L_p(\Omega;\LB_2^0)}^2 \diff{\tau} \Big)^{\frac{1}{2}}\\
    &\quad \le C \Big( \int_{t_{j-1}}^{\sigma} (\sigma - \tau)^{r}  
    \big\| g( X(\tau) ) \big\|_{L_p(\Omega;\LB_{2,r}^0)}^2 \diff{\tau}
    \Big)^{\frac{1}{2}} \\
    &\quad \le C \Big( 1 + \sup_{\tau \in [0,T]} \big\| X(\tau)
    \big\|_{L_p(\Omega;\dot{H}^{r})} \Big) k^{\frac{1+r}{2}}
  \end{align*}
  for all $\sigma \in [t_{j-1}, t_j]$. A similar estimate follows for the
  second summand by \eqref{eq4:glip} and \eqref{eq2:hoelder}, that is
  \begin{align*}
    \Big( \int_{t_{j-1}}^{\sigma} \big\|    
    g(X(\tau)) - g(X(t_{j-1})) \big\|_{L_p(\Omega;\LB_2^0)}^2 \diff{\tau}
    \Big)^{\frac{1}{2}} \le C \Big( \int_{t_{j-1}}^{\sigma} ( \tau - t_{j-1})
    \diff{\tau} \Big)^{\frac{1}{2}} \le C k^{\frac{1+r}{2}}. 
  \end{align*}
  This shows the desired estimate for $J_7$ and it remains to consider $J_8$.
  The estimate of $J_8$ is very similar to the estimate of $\Theta_3$ in the
  proof of Lemma \ref{lem4:nonlin1}. After the application of H\"older's
  inequality we arrive at
  \begin{align*}
    J_8 &\le \int_{0}^{1} \big\| g'\big( X(t_{j-1}) +  s \big( X(\sigma) -
    X(t_{j-1}) \big) \big) - g'(X(t_{j-1}))
    \big\|_{L_{2p}(\Omega;\LB(H,\LB_2^0))}\diff{s}\\
    &\qquad \times \Big\| \int_{t_{j-1}}^{\sigma} g(X(t_{j-1})) \diff{W(\tau)}
    \Big\|_{L_{2p}(\Omega;H)}
  \end{align*}
  Next, by \eqref{eq4:glip} and \eqref{eq2:hoelder} it follows
  \begin{align*}
    \big\| g'\big( X(t_{j-1}) +  s \big( X(\sigma) -
    X(t_{j-1}) \big) \big) - g'(X(t_{j-1}))
    \big\|_{L_{2p}(\Omega;\LB(H,\LB_2^0))} \le C (\sigma -
    t_{j-1})^{\frac{1}{2}}, 
  \end{align*}
  while Proposition \ref{prop:stoch_int} and Assumption \ref{as4:g} yield
  \begin{align*}
    \Big\| \int_{t_{j-1}}^{\sigma} g(X(t_{j-1})) \diff{W(\tau)}
    \Big\|_{L_{2p}(\Omega;H)} &\le C \Big( \int_{t_{j-1}}^{\sigma} \big\|
    g(X(t_{j-1})) \big\|_{L_{2p}(\Omega;\LB_2^0)}^2 \diff{\tau}
    \Big)^{\frac{1}{2}}\\
    &\le C \Big( 1 + \sup_{\tau \in [0,T]} \| X(\tau)
    \|_{L_{2p}(\Omega;H)}\Big) k^{\frac{1}{2}},
  \end{align*}
  for all $\sigma \in [t_{j-1},t_j]$. Therefore, there exists a constant such
  that 
  \begin{align*}
    J_8 \le C k
  \end{align*}
  and the assertion of the lemma has been proved.
\end{proof}

% include section 5

\section{Noise approximation}
\label{sec:Noise}

Starting point of the spectral noise approximation is the covariance operator
$Q \in \LB(U)$, which is symmetric and nonnegative. Since we do not assume that
$Q$ has finite trace, we need to approximate the stochastic integral with
respect to a \emph{cylindrical} $Q$-Wiener process $W \colon [0,T] \times
\Omega \to U$ (see \cite[Ch.~2.5]{roeckner2007}).

Throughout this section we work under the assumption that there exists
an orthonormal basis $(\varphi_j)_{j \in \N}$ of the separable Hilbert space
$U$ such that 
\begin{align*}
  Q \varphi_j = \mu_j \varphi_j, \quad \text{ for all } j \in \N,
\end{align*}
where $\mu_j \ge 0$, $j \in \N$, denote the eigenvalues of $Q$. First let us
note that this assumption is not fulfilled for all symmetric and
nonnegative operators $Q \in L(U)$. However, it always holds true for white
noise, that is $Q = \Id_U$, or if $Q$ is of finite trace by the
spectral theorem for compact, symmetric operators. Further, the family
$(\sqrt{\mu_j} \varphi_j)_{j \in \N}$ is an orthonormal basis of the
Cameron-Martin space $U_0 = Q^{\frac{1}{2}}(U)$, which 
is endowed with the inner product $(u,v)_{U_0} := ( Q^{-\frac{1}{2}} u,
Q^{-\frac{1}{2}} v )_{U}$ for all $u,v \in U_0$ (see
\cite[Ch.~2.3]{roeckner2007}). 

As demonstrated in \cite[Rem.~2.5.1, Prop.~2.5.2]{roeckner2007}, in order to
define the stochastic integral with respect to a cylindrical Wiener process,
one introduces a further Hilbert space $U_1$ and an Hilbert-Schmidt embedding
$\mathcal{I} \colon U_0 \to U_1$, such that $W$ becomes a standard Wiener
process on the larger space $U_1$ with covariance operator $Q_1 :=
\mathcal{I}\mathcal{I}^{\ast}$ and Karhunen-Lo\`eve expansion 
\begin{align}
  \label{eq5:noiseKL}
  W(t) = \sum_{j = 1}^\infty  \beta_j(t) \mathcal{I}(\sqrt{\mu_j} \varphi_j),
  \quad t \in [0,T],
\end{align}
where $\beta_j \colon [0,T] \times \Omega \to \R$, $j \in \N$, is a family of
independent, standard real-valued Brownian motions. Since $\mathcal{I} \colon 
U_0 \to Q_1^{\frac{1}{2}}(U_0)$ is an isometry, the definition of the
stochastic integral with respect to a cylindrical Wiener process is in fact
independent of the choice of the space $U_1$, see
\cite[Rem.~2.5.3]{roeckner2007}. Finally note that one can choose the
Hilbert-Schmidt operator $\mathcal{I}$ in such a way that $(
\mathcal{I}(\sqrt{\mu_j} \varphi_j))_{j\in \N}$ becomes an orthonormal basis of
$Q_1^{\frac{1}{2}}(U_1)$.   

In order to approximate the Wiener process we follow  
the footprints of \cite{barth2011}. Let us denote by $Q_J \in \LB(U)$, $J \in
\N$, the operator given by  
\begin{align*}
  Q_J \varphi_j :=
  \begin{cases}
    \mu_j \varphi_j,& \text{ if } j \in \{1,\ldots,J\},\\
    0,& \text{ else}.
  \end{cases}
\end{align*}
As in \cite{barth2011} we further use the abbreviation $Q_{cJ} := Q - Q_J$.
Now, since $Q_J$ is of finite rank, the $Q_J$-Wiener process $W^J \colon [0,T]
\times \Omega \to U$ defined by
\begin{align}
  \label{eq5:noiseexp}
  W^J(t) = \sum_{j = 1}^J \sqrt{\mu_j} \beta_j(t) \varphi_j, \quad t \in [0,T],
\end{align}
can be simulated on a computer, provided that the orthonormal basis
$(\varphi_j)_{j \in \N}$ of $U$ is explicitly known. Here $\beta_j \colon [0,T]
\times \Omega \to \R$, $j \in \N$, are the same as in \eqref{eq5:noiseKL}
Further, from \cite{barth2011} we recall the notation $W^{cJ}(t) := W(t) -
W^J(t)$ for all $t \in [0,T]$. 

Then, the Milstein-Galerkin finite element scheme with truncated noise is given
by the recursion 
\begin{align}
  \label{eq5:truncMilstein}
  \begin{split}
    X_{k,h,J}(t_0) &= P_h X_0,\\
    X_{k,h,J}(t_n) &= X_{k,h,J}(t_{n-1}) - k \big[ A_h X_{k,h,J}(t_n) +
    P_h f(X_{k,h,J}(t_{n-1})) \big]\\
    &\quad + P_h g(X_{k,h,J}(t_{n-1})) \Delta_k W^{J^2}(t_{n})
    \\  
    &\quad + \int_{t_{n-1}}^{t_n} P_h g'(X_{k,h,J}(t_{n-1}))\Big[ 
    \int_{t_{n-1}}^{\sigma_1} g(X_{k,h,J}(t_{n-1})) \diff{W^J(\sigma_2)} \Big]
    \diff{W^J(\sigma_1)}
  \end{split}
\end{align}
for $n \in \{1,\ldots,N_k\}$ and all $h \in (0,1]$, $k \in (0,T]$ and $J \in
\N$. We stress that the
Euler-Maruyama term incorporates $J^2$ summands of the Wiener noise expansion
\eqref{eq5:noiseexp} while the additional iterated integral term of the
Milstein scheme only uses $J$ summands. As discussed in \cite{barth2011} this
leads to an optimal balance between computational cost and order of
convergence. 

First we embed the scheme \eqref{eq5:truncMilstein} into the abstract framework
of Section \ref{sec:numscheme}. Compared to the scheme \eqref{eq4:Milstein} the
only difference appears in the definition of the increment function, which is
now given by
\begin{align}
  \label{eq5:PhihJ}
  \begin{split}
    \Phi_{h,J}(x,t,k) &= - k S_{k,h} f(x) + S_{k,h} g(x) \big(W^{J^2}(t+k) -
    W^{J^2}(t) \big)\\
    &\qquad + S_{k,h} \int_{t}^{t + k} g'(x)\Big[ \int_{t}^{\sigma_1} g(x)
    \diff{W^J(\sigma_2)} \Big] \diff{W^J(\sigma_1)}, 
  \end{split}
\end{align}
for $(t,k) \in \mathbb{T}$. Our first result is concerned with the stability of
the Milstein scheme with truncated noise. 

\begin{theorem}
  \label{th:truncMilstab}
  Under Assumptions \ref{as4:initial} to \ref{as4:g} the Milstein-Galerkin
  finite element scheme \eqref{eq5:truncMilstein} is bistable for every $h \in
  (0,1]$ and $J \in \N$. The stability constant $C_{\mathrm{Stab}}$ can be
  chosen to be independent of $h \in (0,1]$ and $J \in \N$. 
\end{theorem}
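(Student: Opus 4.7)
The plan is to mimic the proof of Theorem \ref{th:Milstab} verbatim, verifying Assumptions \ref{as:initial}, \ref{as:linstab} and \ref{as:stab} for the new increment function $\Phi_{h,J}$ defined in \eqref{eq5:PhihJ}, so that the bistability follows from the abstract Theorem \ref{th:stab}. Since $\xi_h = P_h X_0$ and the family of linear operators $S_{k,h}$ are identical to those in Section \ref{sec:Mil}, Assumptions \ref{as:initial} and \ref{as:linstab} are already verified with constants independent of $h$ and $J$ by the proof of Theorem \ref{th:Milstab}. Hence the only work is to establish \eqref{eq3:stab1} and \eqref{eq3:stab2} for $\Phi_{h,J}$, uniformly in both $h \in (0,1]$ and $J \in \N$.

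The key observation that makes this essentially a copy of the earlier proof is that, for any $M \in \N$, integration against $W^M$ may be written as integration against $W$ composed with the orthogonal projector $\Pi_M \colon U_0 \to U_0$ onto $\mathrm{span}\{\sqrt{\mu_j}\varphi_j : j = 1,\ldots,M\}$. For a predictable $\Psi \colon [0,T] \times \Omega \to \LB_2^0$ one then has
\begin{align*}
  \int_{\tau_1}^{\tau_2} \Psi(\sigma) \diff{W^M(\sigma)} =
  \int_{\tau_1}^{\tau_2} \Psi(\sigma) \Pi_M \diff{W(\sigma)},
\end{align*}
and $\|\Psi(\sigma) \Pi_M\|_{\LB_2^0} \le \|\Psi(\sigma)\|_{\LB_2^0}$, since $\Pi_M$ is an orthogonal projection in $U_0$. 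The Burkholder--Davis--Gundy estimate of Proposition \ref{prop:stoch_int} therefore applies to the truncated stochastic integrals with the same constant and yields bounds that do not see $M$. The same holds for the iterated stochastic integral in the Milstein increment, where we compose with $\Pi_J \otimes \Pi_J$.

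With this in hand, I would decompose $\|\sum_{j=m}^n S_{k,h}^{n-j}\Phi_{h,J}(0,t_{j-1},k)\|_{L_p(\Omega;H)} \le I_1 + I_2 + I_3$ exactly as in the proof of Theorem \ref{th:Milstab}. Term $I_1$ is deterministic and independent of $J$. For $I_2$, I would rewrite the Euler-Maruyama sum as $\int_{t_{m-1}}^{t_n} S_{k,h}(t_n-\sigma) P_h g(0)\Pi_{J^2}\diff W(\sigma)$ and apply Proposition \ref{prop:stoch_int} together with \eqref{eq2:stabSkh} and the projection bound to recover \eqref{eq4:I2} uniformly in $J$. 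For $I_3$, I would introduce the analogue of $\Gamma_Y$ with $W^J$ in place of $W$, note that its $L_p(\Omega;\LB_2^0)$-norm is again dominated by $C(p) k^{1/2} \max_j \|g(Y(t_{j-1}))\|_{L_p(\Omega;\LB_2^0)}$ (the projection argument kills the $J$-dependence), and then repeat the derivation of \eqref{eq4:I3}. This gives \eqref{eq3:stab1} with $C_\Phi$ independent of $h$ and $J$. The verification of \eqref{eq3:stab2} via the splitting into $I_4 + I_5 + I_6$ proceeds identically, with every appearance of $W$ replaced by $W^J$ or $W^{J^2}$ and every $\LB_2^0$-norm dominated by the corresponding projected norm.

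The only non-routine point is that the third line of \eqref{eq4:glip}, which controls the iterated integrand $g'(x)g(x)$, must still deliver the Hilbert--Schmidt estimate required in the proof of the $I_6$-bound even after composition with $\Pi_J \otimes \Pi_J$; this is immediate because compositions with orthogonal projections only decrease the $\LB_2(U_0,\LB_2^0)$-norm. Measurability of $(x,\omega) \mapsto \Phi_{h,J}(x,t,k)(\omega)$ with respect to $\B(H) \otimes \F_{t+k}/\B(H)$ is obtained exactly as in the last paragraph of the proof of Theorem \ref{th:Milstab}, using continuity of $x\mapsto \Phi_{h,J}(x,t,k)$ as a mapping $H \to L_p(\Omega;H)$. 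Since every constant in the argument is independent of $J$ and $h$, Theorem \ref{th:stab} yields the bistability of \eqref{eq5:truncMilstein} with a constant $C_{\mathrm{Stab}}$ independent of both parameters, which is the claim.
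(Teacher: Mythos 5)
Your proposal is correct and follows essentially the same route as the paper: the paper also reduces everything to the observation that $\|\Psi(\sigma)Q_J^{1/2}\|_{\LB_2(U,H)}^2=\sum_{j=1}^J\mu_j\|\Psi(\sigma)\varphi_j\|^2\le\|\Psi(\sigma)\|_{\LB_2^0}^2$, which is exactly your projection bound $\|\Psi(\sigma)\Pi_M\|_{\LB_2^0}\le\|\Psi(\sigma)\|_{\LB_2^0}$ written in terms of $Q_J^{1/2}$, and then repeats the proof of Theorem \ref{th:Milstab} verbatim with constants independent of $J$. Your additional remarks on the iterated integral and on measurability match the paper's (more terse) treatment.
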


\begin{proof}
  We only need to verify that Assumption \ref{as:stab} is also satisfied by
  $\Phi_{h,J}$. This is done by the exact same steps as in the proof of Theorem
  \ref{th:Milstab}. An important tool in that proof is Proposition
  \ref{prop:stoch_int}. Here we have to apply it to stochastic integrals with
  respect to $W^J$. For this, let $\Psi \colon [0,T] \times \Omega \to \LB_2^0$
  be a predictable stochastic process satisfying the condition of Proposition
  \ref{prop:stoch_int}, then it holds
  \begin{align*}
    \Big\| \int_{\tau_1}^{\tau_2} \Psi(\sigma) \diff{W^J(\sigma)}
    \Big\|_{L_p(\Omega;H)} \le C(p) \Big( \E \Big[ \Big( \int_{\tau_1}^{\tau_2}
    \big\| \Psi(\sigma) Q_J^{\frac{1}{2}} \big\|_{\LB_2(U,H)}^2 \diff{\sigma}
    \Big)^{\frac{p}{2}} \Big] \Big)^{\frac{1}{p}},
  \end{align*}
  for all $0 \le \tau_1 < \tau_2 \le T$ and all $J \in \N$. Since we have
  \begin{align*}
    \big\| \Psi(\sigma) Q_J^{\frac{1}{2}} \big\|_{\LB_2(U,H)}^2 = \sum_{j = 1}^J
    \mu_j \big\| \Psi(\sigma) \varphi_j \big\|^2 \le \sum_{j = 1}^\infty
    \mu_j \big\| \Psi(\sigma) \varphi_j \big\|^2 = \big\| \Psi(\sigma)
    \big\|_{\LB_2^0}^2,
  \end{align*}
  the $L_p$-norm of the stochastic integral with respect to $W^J$ is therefore
  bounded by 
  \begin{align*}
    \Big\| \int_{\tau_1}^{\tau_2} \Psi(\sigma) \diff{W^J(\sigma)}
    \Big\|_{L_p(\Omega;H)} \le C(p) \Big( \E \Big[ \Big( \int_{\tau_1}^{\tau_2}
    \big\| \Psi(\sigma) \big\|_{\LB_2^0}^2 \diff{\sigma}
    \Big)^{\frac{p}{2}} \Big] \Big)^{\frac{1}{p}}.  
  \end{align*}
  In particular, the constant is independent of $J \in \N$. Keeping this in
  mind, all steps in the proof of Theorem \ref{th:Milstab} remain also valid
  for \eqref{eq5:truncMilstein}.  
\end{proof}

Having this established it remains to investigate if 
the scheme \eqref{eq5:truncMilstein} is consistent. For this we introduce the
following additional condition, which allows us to control the order of
consistency with respect to the parameter $J \in \N$. 

\begin{assumption}
  \label{as5:noise}
  There exist constants $C$ and $\alpha > 0$ such that
  \begin{align}
    \label{eq5:alpha1}
    \Big( \sum_{j = 1}^\infty j^{\alpha} \mu_j \big\| g(x) \varphi_j\big\|^2
    \Big)^{\frac{1}{2}} &\le C \big( 1 + \| x \| \big),\\
    \label{eq5:alpha2}
    \Big( \sum_{j = 1}^\infty j^{\alpha} \mu_j \big\| g'(x)[y]
    \varphi_j\big\|^2 \Big)^{\frac{1}{2}} &\le C \| y \| 
  \end{align}
  for all $x,y \in H$, where $(\varphi_j, \mu_j)_{j \in \N}$ are the eigenpairs
  of the covariance operator $Q$.
\end{assumption}

\begin{remarks}
    1) Note that \eqref{eq5:alpha1} and \eqref{eq5:alpha2} coincide if $g
    \colon H \to \LB_2^0$ is linear. 

    2) Assumption \ref{as4:g} ensures that \eqref{eq5:alpha1} and
      \eqref{eq5:alpha2} are fulfilled with $\alpha = 0$. Further, provided
      that $Q$ is of finite trace and $g \colon H \to \LB_2^0$ satisfies
      \begin{align*}
	\| g (x) \|_{\LB(U,H)} &\le C \big( 1 + \| x \| \big), \qquad \text{
	and } \| g' (x)[y] \|_{\LB(U,H)} \le C \|y\|  
      \end{align*}
      for all $x,y\in H$, then \eqref{eq5:alpha1} and \eqref{eq5:alpha2}
      simplify to 
      \begin{align}
	\label{eq5:alpha3}
	\Big( \sum_{j = 1}^\infty j^{\alpha} \mu_j \Big)^{\frac{1}{2}} <
	\infty.
      \end{align}
      Hence, the order of convergence of the truncated
      noise only depends on the rate of decay of the eigenvalues of the
      covariance operator $Q$.
\end{remarks}

\begin{theorem}
  \label{th5:trunccons}
  Let Assumptions \ref{as:Vh} and \ref{as:Vh2} be fulfilled by 
  the spatial discretization.  
  If Assumptions \ref{as4:initial} to \ref{as4:g} and Assumption
  \ref{as5:noise} are satisfied with $p \in [2,\infty)$, 
  $r \in [0,1)$ and $\alpha > 0$, then there exists a constant $C$ such that 
  the following estimate holds true for the local truncation error of the
  Milstein scheme \eqref{eq5:truncMilstein}, namely
  \begin{align*}
    \big\| \mathcal{R}_k \big[ X|_{\mathcal{T}_k} \big] \big\|_{-1,p} \le C
    \big( h^{1+r} + k^{\frac{1+r}{2}} + J^{-\alpha} \big)
  \end{align*}
  for all $h \in [0,1)$, $k \in (0,T]$ and $J \in \N$. In particular, if $h$,
  $J$ and $k$ are coupled by $h := c_1 k^{\frac{1}{2}}$ and $J := \lceil c_2
  k^{-\frac{1+r}{2\alpha}}\rceil$ for some positive constants $c_1,c_2 \in \R$,
  then the Milstein scheme is consistent of order 
  $\frac{1+r}{2}$.
\end{theorem}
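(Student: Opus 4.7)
The plan is to mimic the scaffold of Theorem \ref{th4:cons}. I start by applying Lemma \ref{lem:cons} to decompose $\| \mathcal{R}_k[X|_{\mathcal{T}_k}] \|_{-1,p}$ into five summands. The first four depend only on the mild solution $X$ and on $S(t)$ and $S_{k,h}^n$, so they are identical to those in Theorem \ref{th4:cons} and are bounded by $C(h^{1+r} + k^{(1+r)/2})$ via Lemmas \ref{lem4:initial} and \ref{lem4:lin1}--\ref{lem4:lin3}. Only the fifth summand is affected by the noise truncation, and there I would split $\Phi_{h,J} = \Phi_h + (\Phi_{h,J} - \Phi_h)$. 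The $\Phi_h$-part contributes the same bound $C(h^{1+r} + k^{(1+r)/2})$ already obtained in Lemmas \ref{lem4:nonlin1} and \ref{lem4:nonlin2}. Everything therefore reduces to controlling
\[
\max_n \Big\| \sum_{j=1}^n S_{k,h}^{n-j} \bigl(\Phi_h - \Phi_{h,J}\bigr)(X(t_{j-1}), t_{j-1}, k) \Big\|_{L_p(\Omega;H)}.
\]

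For this noise-truncation residual, I would first separate $\Phi_h - \Phi_{h,J}$ into an Euler piece $S_{k,h} g(x) \bigl( W^{cJ^2}(t+k) - W^{cJ^2}(t) \bigr)$ and a Milstein double-integral piece, and split the latter by telescoping, adding and subtracting $\int g'(x)[\int g(x) \diff{W^J}] \diff{W}$, to obtain
\begin{align*}
&\int_t^{t+k} g'(x)\Bigl[\int_t^{\sigma_1} g(x) \diff{W(\sigma_2)}\Bigr] \diff{W(\sigma_1)}
- \int_t^{t+k} g'(x)\Bigl[\int_t^{\sigma_1} g(x) \diff{W^J(\sigma_2)}\Bigr] \diff{W^J(\sigma_1)} \\
&\qquad = \int_t^{t+k} g'(x)\Bigl[\int_t^{\sigma_1} g(x) \diff{W^{cJ}(\sigma_2)}\Bigr] \diff{W(\sigma_1)} \\
&\qquad\quad + \int_t^{t+k} g'(x)\Bigl[\int_t^{\sigma_1} g(x) \diff{W^J(\sigma_2)}\Bigr] \diff{W^{cJ}(\sigma_1)}.
\end{align*}
Each of the three resulting discrete sums $\sum_{j=1}^n S_{k,h}^{n-j+1}(\cdots)$ is then rewritten as a single stochastic integral on $[0,t_n]$ against the appropriate driving process, and estimated by the Burkholder--Davis--Gundy-type Proposition \ref{prop:stoch_int} combined with the uniform stability \eqref{eq2:stabSkh} of $S_{k,h}(\cdot) P_h$. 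The pointwise input is Assumption \ref{as5:noise}: a Chebyshev-type argument applied to the tail sums yields the squared Hilbert--Schmidt bounds
\[
\|g(x)\|^2_{\LB_2(Q_{cN}^{1/2}U,H)} \le N^{-\alpha} C(1+\|x\|)^2, \qquad \|g'(x)[y]\|^2_{\LB_2(Q_{cN}^{1/2}U,H)} \le N^{-\alpha} C \|y\|^2,
\]
for every $N \in \N$, to be used below with $N = J^2$ for the Euler piece and with $N = J$ for the two Milstein pieces.

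For the Euler piece, these tail bounds applied with $N = J^2$ give directly a Spijker-norm contribution of order $J^{-\alpha}$. For each of the two Milstein pieces, one application of the tail bound produces a factor $J^{-\alpha}$ (from the inner $\diff{W^{cJ}}$ integral in the first case, from the outer truncated Hilbert--Schmidt norm in the second), while the accompanying time integration contributes the usual factor $k^{1/2}$; together this gives a bound of order $k^{1/2} J^{-\alpha/2}$, which is absorbed into $C(k^{(1+r)/2} + J^{-\alpha})$ via the elementary inequality
\[
k^{1/2} J^{-\alpha/2} \le \tfrac12 \bigl(k + J^{-\alpha}\bigr) \le \tfrac12 \bigl(T^{(1-r)/2} k^{(1+r)/2} + J^{-\alpha}\bigr).
\]
The main obstacle I expect is the careful bookkeeping of the double-integral splitting, in particular matching the correct truncated covariance operator ($Q_J$ or $Q_{cJ}$) to each Hilbert--Schmidt factor; the analytic estimates themselves are iterated applications of Proposition \ref{prop:stoch_int} together with the tail bounds from Assumption \ref{as5:noise}.
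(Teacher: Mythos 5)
Your proposal is correct and follows essentially the same route as the paper: the first four terms of the Lemma \ref{lem:cons} decomposition are untouched by the noise truncation, the $g$-part of the fifth term is reduced to Lemma \ref{lem4:nonlin2} by adding and subtracting the untruncated increments, and the residual Euler and iterated-integral pieces are handled exactly as in the paper via the tail bounds of Assumption \ref{as5:noise} (with $N=J^2$ and $N=J$ respectively), Proposition \ref{prop:stoch_int}, and the absorption $k^{1/2}J^{-\alpha/2}\le\tfrac12(k+J^{-\alpha})$. The only cosmetic difference is that you rewrite the discrete sums as single stochastic integrals where the paper first applies Burkholder's discrete-martingale inequality; both yield the same estimates.
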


\begin{proof}
  The proof relies on slightly generalized techniques from \cite[Lem.~4.1 and
  Lem.~4.2]{barth2011}. First let us note, that the results of Lemmas
  \ref{lem4:initial} to \ref{lem4:nonlin1} remain valid for
  \eqref{eq5:truncMilstein}. Therefore, from the decomposition of the local
  truncation error in Lemma \ref{lem:cons} and \eqref{eq4:cons5} it follows
  that we only need to show that the following analogue of Lemma
  \ref{lem4:nonlin2} is valid: There exists a constant $C$ such that for all $n
  \in \{1,\ldots,N_k\}$ 
  \begin{align*}
    %&\max_{n \in \{1,\ldots,N_k\}} 
    &\Big\| \sum_{j = 1}^{n}
    S_{k,h}^{n-j +1 } \Big( \int_{t_{j-1}}^{t_j} g(X(\sigma)) \diff{W(\sigma)}
    - \int_{t_{j-1}}^{t_j} g(X(t_{j-1})) \diff{W^{J^2}(\sigma)}\\
    &\qquad \qquad - \int_{t_{j-1}}^{t_j} g'(X(t_{j-1}))\Big[
    \int_{t_{j-1}}^{\sigma} g(X(t_{j-1})) \diff{W^J(\tau)}\Big]   
    \diff{W^J(\sigma)} \Big)  \Big\|_{L_p(\Omega;H)}
    \\ &\quad 
    \le C \big( k^{\frac{1+r}{2}}
    + J^{-\alpha} \big)
  \end{align*}
  for all $h \in (0,1]$, $k \in (0,T]$ and $J \in \N$.  We begin by fixing an
  arbitrary $n \in \{1,\ldots,N_k\}$ and insert several suitable terms with
  untruncated noise such that Lemma \ref{lem4:nonlin2} is applicable. Hence, we
  obtain 
  \begin{align}
    \label{eq5:terms}
    \begin{split}
    &\Big\| \sum_{j = 1}^{n}
    S_{k,h}^{n-j +1 } \Big( \int_{t_{j-1}}^{t_j} g(X(\sigma)) \diff{W(\sigma)}
    - \int_{t_{j-1}}^{t_j} g(X(t_{j-1})) \diff{W^{J^2}(\sigma)}\\
    &\qquad \qquad - \int_{t_{j-1}}^{t_j} g'(X(t_{j-1}))\Big[
    \int_{t_{j-1}}^{\sigma} g(X(t_{j-1})) \diff{W^J(\tau)}\Big]   
    \diff{W^J(\sigma)} \Big) \Big\|_{L_p(\Omega;H)}\\
    &\quad \le C k^{\frac{1+r}{2}} + \Big\| \sum_{j = 1}^{n} S_{k,h}^{n-j +1 }
    \int_{t_{j-1}}^{t_j} g(X(t_{j-1})) \diff{W^{cJ^2}(\sigma)}
    \Big\|_{L_p(\Omega;H)} \\
    &\qquad + \Big\| \sum_{j = 1}^{n} S_{k,h}^{n-j +1 } \Big(
    \int_{t_{j-1}}^{t_j} g'(X(t_{j-1}))\Big[
    \int_{t_{j-1}}^{\sigma} g(X(t_{j-1})) \diff{W(\tau)}\Big]   
    \diff{W(\sigma)}\\ 
    &\qquad \qquad - \int_{t_{j-1}}^{t_j} g'(X(t_{j-1}))\Big[
    \int_{t_{j-1}}^{\sigma} g(X(t_{j-1})) \diff{W^J(\tau)}\Big]   
    \diff{W^J(\sigma)} \Big) \Big\|_{L_p(\Omega;H)}. 
    \end{split}
  \end{align}
  First let us note that by Assumption \ref{as5:noise} it holds
  \begin{align}
    \label{eq5:gHS}
    \begin{split}
    \big\| g(X(t_{j-1}))
    \big\|_{L_p(\Omega;\LB_2(Q_{cJ^2}^{\frac{1}{2}}(U),H))} &= \Big( \E \Big[
    \Big( \sum_{j = J^2+1}^\infty \mu_j \big\|  g(X(t_{j-1})) \varphi_j
    \big\|^2 \Big)^{\frac{p}{2}} \Big] \Big)^{\frac{1}{p}}\\
    &\le \Big( \E \Big[
    \Big( \sum_{j = J^2+1}^\infty \frac{j^\alpha}{J^{2\alpha}} \mu_j \big\|
    g(X(t_{j-1})) \varphi_j \big\|^2 \Big)^{\frac{p}{2}} \Big]
    \Big)^{\frac{1}{p}}\\ 
    &\le C J^{-\alpha} \Big( 1 + \sup_{t\in [0,T]} \big\| X(t)
    \big\|_{L_p(\Omega;H)}\Big).
    \end{split}
  \end{align}
  This together with Burkholder's inequality
  \cite[Th.~3.3]{burkholder1991}, \eqref{eq2:stabSkh} and Proposition
  \ref{prop:stoch_int} applied to $W^{cJ^2}$  yields for the second term
  \begin{align*}
    &\Big\| \sum_{j = 1}^{n} S_{k,h}^{n-j +1 }
    \int_{t_{j-1}}^{t_j} g(X(t_{j-1})) \diff{W^{cJ^2}(\sigma)}
    \Big\|_{L_p(\Omega;H)} \\
    &\quad \le C \Big( \E \Big[ \Big( \sum_{j = 1}^{n} \Big\| S_{k,h}^{n-j +1 }
    \int_{t_{j-1}}^{t_j} g(X(t_{j-1})) \diff{W^{cJ^2}(\sigma)} \Big\|^2
    \Big)^{\frac{p}{2}} \Big] \Big)^{\frac{1}{p}} \\
    &\quad \le C  \Big( \sum_{j = 1}^{n} \Big\| S_{k,h}^{n-j +1 }
    \int_{t_{j-1}}^{t_j} g(X(t_{j-1})) \diff{W^{cJ^2}(\sigma)}
    \Big\|^2_{L_p(\Omega;H)} \Big)^{\frac{1}{2}}\\
    &\quad \le C  \Big( \sum_{j = 1}^{n} \int_{t_{j-1}}^{t_j} \big\|
    g(X(t_{j-1})) \big\|^2_{L_p(\Omega;\LB_2(Q_{cJ^2}^{\frac{1}{2}}(U),H))}
    \diff{\sigma} \Big)^{\frac{1}{2}} \le C \sqrt{T} J^{-\alpha}.
  \end{align*}
  By the same arguments we get for the third summand in \eqref{eq5:terms} that
  \begin{align*}
    &\Big\| \sum_{j = 1}^{n} S_{k,h}^{n-j +1 } \Big(
    \int_{t_{j-1}}^{t_j} g'(X(t_{j-1}))\Big[
    \int_{t_{j-1}}^{\sigma} g(X(t_{j-1})) \diff{W(\tau)}\Big]   
    \diff{W(\sigma)}\\ 
    &\qquad \qquad - \int_{t_{j-1}}^{t_j} g'(X(t_{j-1}))\Big[
    \int_{t_{j-1}}^{\sigma} g(X(t_{j-1})) \diff{W^J(\tau)}\Big]   
    \diff{W^J(\sigma)} \Big) \Big\|_{L_p(\Omega;H)}\\
    &\quad \le C \Big( \sum_{j = 1}^{n} \Big\| \int_{t_{j-1}}^{t_j}
    g'(X(t_{j-1}))\Big[ \int_{t_{j-1}}^{\sigma} g(X(t_{j-1}))
    \diff{W^{cJ}(\tau)}\Big]
    \diff{W(\sigma)}\Big\|_{L_p(\Omega;H)}^2\Big)^{\frac{1}{2}}\\ 
    &\qquad + C \Big( \sum_{j = 1}^{n} \Big\|  \int_{t_{j-1}}^{t_j}
    g'(X(t_{j-1}))\Big[ 
    \int_{t_{j-1}}^{\sigma} g(X(t_{j-1})) \diff{W(\tau)}\Big]   
    \diff{W^{cJ}(\sigma)} \Big\|_{L_p(\Omega;H)}^2
    \Big)^{\frac{1}{2}}.
  \end{align*}
  Then, by two applications of Proposition \ref{prop:stoch_int} and Assumptions
  \ref{as4:g} and \eqref{eq5:gHS} it follows
  \begin{align*}
    &\Big( \sum_{j = 1}^{n} \Big\| \int_{t_{j-1}}^{t_j}
    g'(X(t_{j-1}))\Big[ \int_{t_{j-1}}^{\sigma} g(X(t_{j-1}))
    \diff{W^{cJ}(\tau)}\Big]
    \diff{W(\sigma)}\Big\|_{L_p(\Omega;H)}^2\Big)^{\frac{1}{2}}\\
    &\quad \le C \Big( \sum_{j = 1}^{n} \int_{t_{j-1}}^{t_j} \Big\| 
    g'(X(t_{j-1})) \Big[ \int_{t_{j-1}}^{\sigma} g(X(t_{j-1}))
    \diff{W^{cJ}(\tau)}\Big] \Big\|_{L_p(\Omega;\LB_2^0)}^2
    \diff{\sigma} \Big)^{\frac{1}{2}}\\
    &\quad\le C \sup_{x \in H} \big\| g'(x) \big\|_{\LB(H,\LB_2^0)} 
    \Big( \sum_{j = 1}^{n} \int_{t_{j-1}}^{t_j} \int_{t_{j-1}}^{\sigma} 
    \big\| g(X(t_{j-1})) \big\|_{L_p(\Omega;\LB_2(Q_{cJ}^{\frac{1}{2}},H))}^2
    \diff{\tau} \diff{\sigma} \Big)^{\frac{1}{2}}\\
    &\quad \le C C_g \sqrt{T}  k^{\frac{1}{2}}
    J^{-\frac{\alpha}{2}} \le C \big( k + J^{-\alpha} \big), 
  \end{align*}
  where we applied the inequality $ab \le \frac{1}{2} (a^2+ b^2)$ in the last
  step. By using \eqref{eq5:alpha2} we obtain the following estimate in the
  same way as in \eqref{eq5:gHS}, 
  \begin{align*}
    &\Big( \sum_{j = 1}^{n} \Big\|  \int_{t_{j-1}}^{t_j}
    g'(X(t_{j-1}))\Big[ 
    \int_{t_{j-1}}^{\sigma} g(X(t_{j-1})) \diff{W(\tau)}\Big]   
    \diff{W^{cJ}(\sigma)} \Big\|_{L_p(\Omega;H)}^2
    \Big)^{\frac{1}{2}}\\
    &\quad \le C \Big( \sum_{j = 1}^{n} \int_{t_{j-1}}^{t_j} \Big\|
    g'(X(t_{j-1}))\Big[ \int_{t_{j-1}}^{\sigma} g(X(t_{j-1}))
    \diff{W(\tau)}\Big] \Big\|_{L_p(\Omega;\LB_2(Q_{cJ}^{\frac{1}{2}}(U),H))}^2
    \diff{\sigma} \Big)^{\frac{1}{2}}  \\
    &\quad \le C \Big( \sum_{j = 1}^{n} \int_{t_{j-1}}^{t_j} J^{-\alpha}
    \Big\| \int_{t_{j-1}}^{\sigma} g(X(t_{j-1})) \diff{W(\tau)}
    \Big\|_{L_p(\Omega;H)}^2 \diff{\sigma} \Big)^{\frac{1}{2}}\\ 
    &\quad \le C J^{-\frac{\alpha}{2}} \Big( \sum_{j = 1}^{n}
    \int_{t_{j-1}}^{t_j} \big(\sigma - t_{j-1}\big) \big\| g(X(t_{j-1})) 
    \big\|_{L_p(\Omega;\LB_2^0)}^2   \diff{\sigma} \Big)^{\frac{1}{2}}\\
    &\quad \le C  \sqrt{T} C_g \Big( 1 + \sup_{t \in [0,T]} \big\| X(t)
    \big\|_{L_p(\Omega;H)} \Big) k^{\frac{1}{2}} J^{-\frac{\alpha}{2}}
    \le C \big( k + J^{-\alpha}\big).   
  \end{align*}
  This completes the proof.
\end{proof}

Combining Theorems \ref{th:truncMilstab} and \ref{th5:trunccons} with Theorem
\ref{th:lvlconv} immediately yields the following convergence result:

\begin{theorem}
  \label{th6:conv}
  Let Assumptions \ref{as:Vh} and \ref{as:Vh2} be fulfilled by 
  the spatial discretization.  
  If Assumptions \ref{as4:initial} to \ref{as4:g} and Assumption
  \ref{as5:noise} are satisfied with $p \in [2,\infty)$, 
  $r \in [0,1)$ and $\alpha > 0$, then there exists a constant $C$ such that
  \begin{align*}
    \max_{0 \le n \le N_k} \big\| X_{k,h,J}(t_n) - X(t_n)
    \big\|_{L_p(\Omega;H)}  \le C \big( h^{1+r} + k^{\frac{1+r}{2}} +
    J^{-\alpha} \big) 
  \end{align*}
  for all $h \in (0,1]$, $k \in (0,T]$ and $J \in \N$, where $X_{k,h,J}$
  denotes the grid function generated by the Milstein scheme with truncated
  noise \eqref{eq5:truncMilstein} and $X$ is the mild solution to
  \eqref{eq1:SPDE}.
\end{theorem}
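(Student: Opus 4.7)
The plan is to observe that Theorem \ref{th6:conv} is essentially a corollary of the three results already established, with no further analytical work required. The abstract convergence theorem \ref{th:lvlconv} asserts that for any bistable one-step scheme of the form \eqref{eq3:scheme}, the strong error equals the consistency error up to two-sided constants depending only on the stability constant. Thus the job reduces to verifying that both ingredients — bistability and consistency — have already been proved for the truncated-noise Milstein scheme \eqref{eq5:truncMilstein}.

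First I would recall that \eqref{eq5:truncMilstein} has been embedded into the abstract framework of Section \ref{sec:numscheme} via the choice $\xi_h = P_h X_0$, $S_{k,h} = (\Id_H + kA_h)^{-1}P_h$, and the increment function $\Phi_{h,J}$ given in \eqref{eq5:PhihJ}. Under this identification the grid function $X_{k,h,J}$ produced by \eqref{eq5:truncMilstein} coincides with the abstract iterates, so in particular $\mathcal{R}_k[X_{k,h,J}] = 0 \in \mathcal{G}_p(\mathcal{T}_k)$. Theorem \ref{th:truncMilstab} then supplies bistability of the scheme with a constant $C_{\mathrm{Stab}}$ independent of $h \in (0,1]$ and $J \in \mathbb{N}$, so the right-hand inequality in \eqref{eq3:bistab} applies with $Y = X_{k,h,J}$ and $Z = X|_{\mathcal{T}_k}$.

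Next, invoking this inequality together with $\mathcal{R}_k[X_{k,h,J}] = 0$ yields
\begin{align*}
  \max_{0 \le n \le N_k} \big\| X_{k,h,J}(t_n) - X(t_n) \big\|_{L_p(\Omega;H)}
  &= \big\| X_{k,h,J} - X|_{\mathcal{T}_k} \big\|_{0,p} \\
  &\le C_{\mathrm{Stab}} \big\| \mathcal{R}_k[X|_{\mathcal{T}_k}] \big\|_{-1,p}.
\end{align*}
Finally I would plug in the consistency bound from Theorem \ref{th5:trunccons}, which gives $\|\mathcal{R}_k[X|_{\mathcal{T}_k}]\|_{-1,p} \le C(h^{1+r} + k^{(1+r)/2} + J^{-\alpha})$ uniformly in $h, k, J$, and absorb constants to obtain the asserted inequality.

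There is no real obstacle here: the entire argument is a one-line composition of Theorems \ref{th:lvlconv}, \ref{th:truncMilstab}, and \ref{th5:trunccons}. The only small point deserving explicit mention is the $h$- and $J$-uniformity of $C_{\mathrm{Stab}}$ guaranteed by Theorem \ref{th:truncMilstab}, since without this uniformity the constant $C$ in the final estimate would depend on the discretization parameters. This was precisely the motivation for tracking the parameter dependence throughout the proofs of bistability.
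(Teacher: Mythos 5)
Your proposal is correct and is precisely the argument the paper uses: Theorem \ref{th6:conv} is stated as an immediate consequence of combining the bistability result (Theorem \ref{th:truncMilstab}), the consistency estimate (Theorem \ref{th5:trunccons}), and the abstract two-sided convergence theorem (Theorem \ref{th:lvlconv}). Your added remark on the $h$- and $J$-uniformity of $C_{\mathrm{Stab}}$ is exactly the point that makes the composition legitimate.
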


\section*{Acknowledgement}
The author wants to express his gratitude to all his colleagues, who gave many
helpful advices and valuable insights during several encouraging and
inspiring discussions. In particular, my thanks goes to Adam Andersson, Andrea
Barth, Wolf-J\"urgen Beyn, Sonja Cox, Arnulf Jentzen, Annika Lang and Stig
Larsson. 

I also gratefully acknowledge the hospitality and the travel support by
Chalmers University of Technology, G\"oteborg, Sweden, as well as by
the Collaborative Research Centre 701 ‘Spectral
Structures and Topological Methods in Mathematics’ at Bielefeld
University, Germany.

\def\cprime{$'$} \def\polhk#1{\setbox0=\hbox{#1}{\ooalign{\hidewidth
  \lower1.5ex\hbox{`}\hidewidth\crcr\unhbox0}}}

\end{document}